\newtheorem{theorem}{Theorem}[section]
\newtheorem{definition}[theorem]{Definition}
\newtheorem{lemma}[theorem]{Lemma}
\newtheorem{assumption}[theorem]{Assumption}
\newtheorem{proposition}[theorem]{Proposition}
\newtheorem{remark}[theorem]{Remark}
\newcommand{\R}{\mathbb{R}}
\newcommand{\N}{\mathbb{N}}
\newcommand{\M}{\mathcal{M}_0(\Omega)}
\newcommand{\K}{\mathcal{K}(\Omega)}
\newcommand{\Linf}{L^{\infty}(\Omega)}
\newcommand{\Cinf}{C_c^{\infty}(\Omega)}
\newcommand{\io}{\int_{\Omega}}
\newcommand{\Lmu}{L_{\mu}^2(\Omega)}
\newcommand{\Ltwo}{L^2(\Omega)}
\newcommand{\caps}{\operatorname{cap}_s}
\newcommand{\gammaTo}{\xrightarrow{\gamma}}
\newcommand{\specHs}{\mathbb{H}^s(\Omega)}
\numberwithin{equation}{section}
\begin{document}
	
	\title{Capacitary measures in fractional order Sobolev spaces: Compactness and applications to minimization problems}

	\author{Anna Lentz%
		\thanks{Institut f\"ur Mathematik,
			Universit\"at W\"urzburg,
			97074 W\"urzburg, Germany, {\tt anna.lentz@uni-wuerzburg.de}.
			This research was partially supported by the German Research Foundation DFG under project grant Wa 3626/5-1.}}
	
	\date{\today}
	
	\maketitle
	
	{\bfseries Abstract.}
	Capacitary measures form a class of measures that vanish on sets of capacity zero. These measures are compact with respect to so-called $\gamma$-convergence, which relates a sequence of measures to the sequence of solutions of relaxed Dirichlet problems. This compactness result is already known for the classical $H^1(\Omega)$-capacity.
		This paper extends it to the fractional capacity defined for fractional order Sobolev spaces $H^s(\Omega)$ for $s\in (0,1)$. The compactness result is applied to obtain a finer optimality condition for a class of minimization problems in $H^s(\Omega)$.
	\bigskip

	{\bfseries Keywords. } Fractional Sobolev spaces, capacities, $\gamma$-convergence, optimality condition, $L^p$-functionals
	
	\bigskip
	
	{\bfseries MSC (2020) classification. }
	49K30, 28A33, 31A15 

	\bigskip

	\section*{Introduction}
	The aim is to derive a compactness result for capacitary measures and to use it to obtain a necessary optimality condition for minimization problems in fractional order Sobolev spaces.
	Compactness of capacitary measures is shown with respect to so-called $\gamma$-convergence.\\
	Here, $\gamma$-convergence is a notion of convergence of measures that is defined via the convergence of solutions of a corresponding relaxed Dirichlet problem, see \cite{29capCompactness,115DirInPerf}.  This convergence is related to the well-known $\Gamma$-convergence of functionals \cite{114Wiener, 113varyingObstacles, 160introGamma}.\\
	To be precise, a sequence of capacitary measures $(\mu_k)$ $\gamma$-converges to $\bar\mu$ if for every $f\in H^s(\Omega)^*$ with $s\in(0,1]$ the solutions $w_k \in H^s(\Omega)\cap L_{\mu_k}^2(\Omega)$ of 
	\begin{align*}(w_k,v)_{H^s(\Omega)} + \int_{\Omega}w_k v \dd \mu_k = \braket{f,v}_{H^s(\Omega)} \qquad \forall v\in H^s(\Omega) \cap L_{\mu_k}^2(\Omega) \end{align*} converge weakly in $H^s(\Omega)$ to the solution $\bar w \in H^s(\Omega)\cap L_{\bar\mu}^2(\Omega)$ of 
	\begin{equation*}
		(\bar w,v)_{H^s(\Omega)} + \int_{\Omega}\bar w v \dd \bar\mu = \braket{f,v}_{H^s(\Omega)} \qquad \forall v\in H^s(\Omega) \cap L_{\bar\mu}^2(\Omega).
	\end{equation*}
	
	For the case $s=1$ it was shown that capacitary measures are compact with respect to $\gamma$-convergence, see e.g. \cite{29capCompactness, 115DirInPerf}. In the fractional case, we did not find such a convergence result in the literature. However, there are related results about
	$\gamma$-convergence for sets, see \cite{110optimalPartFrac}. 
	Therefore, an aim of this paper is to extend the compactness result for $\gamma$-convergence of capacitary measures in for $s=1$
	to the fractional case. \\
	Capacitary measures form a class of measures that vanish on sets of capacity zero. For an open Lipschitz domain $\Omega\subset \R^d$ and a compact set $K\subset \Omega$, we define the fractional capacity of $K$ for $s\in(0,1)$ as 
	\begin{align*}  \caps(K) \coloneqq \inf \{\|w\|_{H^s(\Omega)}: w \in \Cinf, w\geq 1_K \}, \end{align*}
	where $H^s(\Omega)$ is a fractional order Sobolev space. \\
	Some results on fractional capacities can be found for example in \cite{30fracRelCap, 126fracCap, 148quantStab}.
	In \cite{30fracRelCap}, a fractional relative capacity on $\bar\Omega$ is investigated which is then used to characterize zero trace fractional Sobolev spaces. \cite{126fracCap} uses fractional capacities to characterize fractional Sobolev embeddings. \\
	There are various applications of capacitary measures both for $s<1$ and $s=1$.
	In \cite{35genDerObstacle}, capacitary measures are used to characterize generalized derivatives of the solution operator of an obstacle problem.
	One can also use capacitary measures to investigate optimization problems depending on the domain like shape optimization problems \cite{121shapeOpNonloc} or optimal partition problems \cite{110optimalPartFrac}. The measures also allow to solve problems with rather complicated domains like perforated domains as investigated in \cite{29capCompactness,115DirInPerf}.\\
	In this paper we want to use fractional capacity theory and the obtained compactness with respect to $\gamma$-convergence to derive a more detailed optimality condition of an optimization problem considered in \cite{sparse}. This problem is of the form
	\begin{equation} \label{eq:minProbSparse}
		\underset{w\in H^s(\Omega)}{\min}\,\, F(w) + \frac{\alpha}{2} \|w\|_{H^s(\Omega)}^2+\beta \int_{\Omega} |w|^p \dd x,  
	\end{equation}
	with $p\in[0,1)$.
	In the optimality condition of that problem, a multiplier $\bar \lambda \in H^s(\Omega)^*$ corresponding to the non-smooth $L^p$-pseudo-norm can be approximated by a sequence $\lambda_k=w_k\mu_k$, where $\mu_k$ is a capacitary measure and $w_k$ the solution to some approximating auxiliary problem of \eqref{eq:minProbSparse}. Thus, the aim is to pass to the limit in this product and to also obtain a decomposition $\bar\lambda = \bar w\bar\mu$, where $\bar w$ is the solution of the considered optimization problem and $\bar\mu$ is the $\gamma$-limit of the sequence $(\mu_k)$.
	
	Such a decomposition can also be obtained by defining $\bar \mu$ directly via the multiplier $\bar \lambda$. However, this only works under some additional assumptions. Something similar was done in \cite{137convOc} to describe subgradients of a solution operator of a variational inequality.\\ 
	The structure of the paper is as follows. First, we collect some definitions and results on fractional order Sobolev spaces in Section \ref{sec:fracSob}. Fractional capacities and capacitary measures are considered in the following Section \ref{sec:cap}. Then compactness of capacitary measures with respect to $\gamma$-convergence is proved in Section \ref{sec:compact}. The results are applied in Section \ref{sec:application} to obtain a finer optimality condition for some minimization problem. Finally, in Section \ref{sec:numerics} we give some numerical examples.

	\subsection*{Notation}
	Throughout this paper, $\Omega\subset \R^d$ denotes a bounded Lipschitz domain. 
	For some given measure $\mu$, the space $\Lmu$ is defined by 
	\[\Lmu \coloneqq \{w\colon \Omega \to \R : \io |w|^2 \dd \mu < \infty\} \qquad \text{with} \qquad \|w\|_{\Lmu} \coloneqq \left( \io |w|^2 \dd \mu \right)^{\frac{1}{2}}. \] 
	The space $W$ will be defined in the next section as a fractional Sobolev space on $\Omega$ equipped with a suitable inner product.
	Moreover, we use the notation $(\cdot,\cdot)_V$ for the inner product of a Hilbert space $V$ and $\braket{\cdot,\cdot}_V$ for its duality product. We denote the negative or positive part of a function $w$ by $w_-$ and $w_+$, respectively.\\
	For a set $A$ we denote by $I_A$ its characteristic function that is one on $A$ and zero elsewhere.
	
	\section{Fractional Sobolev spaces}\label{sec:fracSob}
	To begin, we briefly introduce fractional order Sobolev spaces and state some auxiliary results. There are various definitions of fractional Sobolev spaces in the literature and we consider four of them here.  
	\begin{definition} Let $s\in (0,1)$.
		Then the fractional order Sobolev space $H^s(\Omega)$
		is defined as $$H^{s}(\Omega) \coloneqq  \left\{ w \in L^2(\Omega): \frac{|w(x)-w(y)|}{|x-y|^{\frac{d}{2}+s}} \in L^2(\Omega \times \Omega) \right\}  $$
		with norm \begin{align}\label{eq:def_frac_int}
			\|w\|_{H^{s}(\Omega)} \coloneqq \left(\int_{\Omega}|w|^2\dd x+\frac{c_{d,s}}{2}\int_{\Omega}\int_{\Omega} \frac{|w(x)-w(y)|^2}{|x-y|^{d+2s}}\dd y \dd x \right)^{\frac{1}{2}}\end{align} 
		and inner product
		\[
		(u,w)_{H^s(\Omega)}  = \io uw \dd x + \frac{c_{d,s}}{2} \io \io \frac{(u(x)-u(y)) (w(x)-w(y)))}{|x-y|^{d+2s}} \dd y \dd x,
		\]
		where
		$c_{d,s}\coloneqq \frac{s 2^{2s}\Gamma(s+\frac{d}{2})}{\pi^{\frac{d}{2}}\Gamma(1-s)}.$ 
	\end{definition}
	
	Using this norm, the space $H_0^s(\Omega)$ is defined as $$ H_0^s(\Omega) \coloneqq \overline{C_c^{\infty}(\Omega)}^{H^s(\Omega)}, $$ where $C_c^{\infty}(\Omega)$ is the space of infinitely often continuously differentiable functions with compact support in $\Omega$.
	Furthermore, let 
	\begin{equation}\label{eq:intHs}
		\tilde{H}^s(\Omega) \coloneqq \left\{ w \in H^s(\mathbb{R}^d):  w_{|\mathbb{R}^d\setminus  \Omega} = 0 \right\}=\overline{C_c^{\infty}(\Omega)}^{H^s(\mathbb{R}^d)},
	\end{equation}
	where the second identity was shown in \cite[Theorem 6]{7densityFrac}.
	The space $\tilde{H}^s(\Omega)$ is supplied with the $H^s(\R^d)$-inner product.
	
	Besides these integral fractional Sobolev spaces, we also consider the spectral fractional Sobolev space. This space is defined via eigenvectors and eigenfunctions of the Laplace operator with Dirichlet boundary conditions. 
	\begin{definition}\label{def:specHs} Let $s\geq 0$. 
		Then the fractional order Sobolev space in spectral form is defined as  $$ \mathbb{H}^s(\Omega) \coloneqq \left\{ w= \sum_{n=1}^{\infty} \left( \int_{\Omega}w \phi_n \dd x \right) \phi_n \in L^2(\Omega): ||w||_{\mathbb{H}^s}^2 \coloneqq \sum_{n=1}^{\infty} \lambda_n^s w_n^2<\infty \right\},$$
		where $\phi_n$ are the eigenfunctions of the Laplacian with zero Dirichlet boundary conditions to the eigenvalues $\lambda_n$.
	\end{definition}
	This norm can be equivalently expressed using the integral formulation  
	\begin{align} \label{eq:spectralInt}\|w\|^2_{\mathbb{H}^s(\Omega)}= \frac{1}{2} \int_{\Omega}\int_{\Omega}|(w(x)-w(y))|^2J(x,y) \dd y \dd x + \int_{\Omega}\kappa(x) |w(x)|^2\dd x, 
	\end{align}
	as done for example in \cite{nonhomBC, anote}.
	Here, $J$ and $\kappa$ are measurable non-negative functions, and $J$ is symmetric with $J(x,y)=J(y,x)$  for a.a. $x,y \in \Omega$.
	
	In \cite{105ellipticProb, lions} it was shown that these three spaces coincide for most $s\in(0,1)$ and have equivalent norms. To be precise, it holds 
	\begin{align} 
		\label{eq:equiv_spaces}H_0^s(\Omega) = \left\{ \begin{matrix} H^s(\Omega) = \tilde{H}^s(\Omega) =\specHs && \textrm{  if  }0<s<\frac{1}{2},  \\ H^s(\Omega) && \textrm{  if  } s=\frac{1}{2}, \\ \tilde{H}^s(\Omega) =\specHs && \textrm{  if  } \frac{1}{2}<s<1,  \end{matrix}\right. 
	\end{align} 
	From now on, the space $W$ denotes the fractional Sobolev space $H_0^s(\Omega)$ for some $s\in (0,1)$ equipped with one of the equivalent norms of spaces that coincide with $H_0^s(\Omega)$ by statement \eqref{eq:equiv_spaces} above.\\
	Furthermore, the following properties of these fractional Sobolev spaces were shown in \cite[Section 6]{sparse} and \cite[Sections 6, 7]{hitch}. 
	\begin{proposition} \label{prop:propFrac} The space $W$ satisfies the following properties.
		\begin{enumerate}
			\item[(i)] The embedding $W\hookrightarrow L^2(\Omega)$ is compact.
			\item[(ii)] There is $q>2$ such that $W$ is continuously embedded in $L^q(\Omega)$. More precisely, \begin{align*}
				W \hookrightarrow \left\{ \begin{matrix}
					L^{\frac{2d}{d-2s}}(\Omega) & \text{if } d>2s, \\ L^p(\Omega), p\in[1,\infty) & \text{if } d=2s, \\ C^{0,s-\frac{d}{2}}(\Bar{\Omega}) & \text{if } d<2s.
				\end{matrix} \right.
			\end{align*}
		\end{enumerate}
	\end{proposition}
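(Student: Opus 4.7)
The plan is to reduce both assertions to known fractional Sobolev inequalities and the fractional Rellich--Kondrachov theorem, exploiting the equivalence~\eqref{eq:equiv_spaces} to switch between the integral and spectral representations of $W$. Since $W=H_0^s(\Omega)$ carries the norm of whichever of $H^s(\Omega)$, $\tilde H^s(\Omega)$, or $\specHs$ coincides with it for the given $s$, and all these norms are equivalent, it suffices to establish each embedding for a single convenient representative and transport the conclusion through the equivalence constants.

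For (i), I would work with the integral Gagliardo seminorm~\eqref{eq:def_frac_int} and verify the Fr\'echet--Kolmogorov compactness criterion in $L^2(\Omega)$ for a bounded sequence $(w_n)\subset W$. Tightness is immediate because $\Omega$ is bounded, and $L^2$-equicontinuity of translates reduces, after extension by zero whenever $W=\tilde H^s(\Omega)$, to an inequality of the form
\[
\|w_n(\cdot+h)-w_n\|_{L^2(\R^d)}^2 \le C|h|^{2s}\,[w_n]_{H^s(\R^d)}^2,
\]
which is obtained from the definition of the Gagliardo seminorm by the change of variables $y=x+h$. In the remaining case $s<\tfrac{1}{2}$, where $W=H^s(\Omega)$, a bounded extension operator $E\colon H^s(\Omega)\to H^s(\R^d)$ available on Lipschitz domains lets me run the same argument on $Ew_n$, producing an $L^2$-convergent subsequence of $(w_n)$.

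For (ii), I would invoke the fractional Sobolev inequality on $\R^d$: for $d>2s$,
\[
\|u\|_{L^{2d/(d-2s)}(\R^d)} \le C\|u\|_{H^s(\R^d)} \qquad \forall u\in H^s(\R^d),
\]
which, applied to the zero-extension of $w\in W$ (or to $Ew$ in the non-zero-trace situation), gives $W\hookrightarrow L^{2d/(d-2s)}(\Omega)$. For $d<2s$ the Morrey-type embedding $H^s(\R^d)\hookrightarrow C^{0,s-d/2}(\R^d)$, transferred through the same extension, yields the H\"older estimate. For the borderline $d=2s$, an $L^p(\Omega)$-embedding for every finite $p$ follows by interpolating the subcritical embeddings $W\hookrightarrow H_0^{s'}(\Omega)\hookrightarrow L^{2d/(d-2s')}(\Omega)$ for $s'<s$ sufficiently close to $s$.

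The step I expect to be the main obstacle is the construction and boundedness of the extension operator $E$ in the range $s<\tfrac{1}{2}$, since there $H^s(\Omega)$ and $\tilde H^s(\Omega)$ genuinely differ and zero-extension generally fails to preserve $H^s$-regularity; one has to appeal to a Stein- or Calder\'on-type extension tailored to Lipschitz domains. The remaining ingredients, namely the Gagliardo translation estimate, the fractional Sobolev inequality, and the Morrey inequality on $\R^d$, are classical.
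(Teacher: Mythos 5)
Your argument is correct, but note that the paper does not prove Proposition \ref{prop:propFrac} at all: it simply quotes the statements from \cite[Section 6]{sparse} and \cite[Sections 6, 7]{hitch}, so what you have written is essentially a reconstruction of the standard proofs in those references (Fr\'echet--Kolmogorov with the Gagliardo translation estimate for the compact embedding into $L^2(\Omega)$, the fractional Sobolev and Morrey inequalities on $\R^d$ for the subcritical and supercritical cases, and an exhaustion of the borderline case by subcritical exponents). One remark on the step you single out as the main obstacle: for $s<\tfrac12$ there is in fact no obstacle, because by \eqref{eq:equiv_spaces} one has $H^s(\Omega)=\tilde H^s(\Omega)=H_0^s(\Omega)$ with equivalent norms on a bounded Lipschitz domain, so extension by zero is already bounded in that range; the only case where a genuine Calder\'on/Stein-type extension is needed is $s=\tfrac12$, where $W=H_0^{1/2}(\Omega)=H^{1/2}(\Omega)$ and zero extension fails, and there the Lipschitz extension theorem (e.g. \cite[Theorem 5.4]{hitch}) covers it, so your fallback is valid even if attributed to the wrong range of $s$. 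It may also be worth observing that with $s\in(0,1)$ the cases $d=2s$ and $d<2s$ occur only for $d=1$ with $s=\tfrac12$ and $s>\tfrac12$ respectively, which keeps your interpolation and Morrey steps confined to one-dimensional situations where $W=\;$either $H^{1/2}(\Omega)$ or $\tilde H^s(\Omega)$, and in the latter case zero extension again suffices.
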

	
	Next, we state some auxiliary results that we use in the upcoming sections.
	\begin{lemma}\label{lm:LinfDenseW}
		$\Linf$ is dense in $W^*$.
	\end{lemma}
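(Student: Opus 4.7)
The plan is to exploit the Gelfand-triple structure $W \hookrightarrow \Ltwo \hookrightarrow W^*$. First I would establish that $\Ltwo$ embeds continuously into $W^*$: by Proposition \ref{prop:propFrac}(i) the inclusion $W \hookrightarrow \Ltwo$ is continuous, so the map $T \colon \Ltwo \to W^*$ defined by $\braket{Tf,v}_W \coloneqq \io f v \dd x$ satisfies $\|Tf\|_{W^*} \le C\|f\|_{\Ltwo}$.

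Next I would show that $T(\Ltwo)$ is dense in $W^*$. The inclusion $W \hookrightarrow \Ltwo$ has dense range because $\Cinf \subset W$ is dense in $\Ltwo$. Hence $T$ is injective on $\Ltwo$ is not quite what is needed; instead, to get density of the range one uses the adjoint viewpoint together with reflexivity of the Hilbert space $W$. Concretely, if $g \in W = W^{**}$ annihilates $T(\Ltwo)$, i.e.\ $\io f g \dd x = 0$ for every $f \in \Ltwo$, then $g = 0$ in $\Ltwo$ and therefore $g = 0$ in $W$. Hahn-Banach (applied in $W^*$ with predual $W$) then forces $T(\Ltwo)$ to be norm-dense in $W^*$.

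Finally I would combine this with the elementary fact that $\Linf$ is dense in $\Ltwo$: truncating $f\in \Ltwo$ by $f_n \coloneqq \max(-n,\min(n,f))$ gives $f_n \in \Linf$ with $f_n \to f$ in $\Ltwo$ by dominated convergence. Given $\varphi \in W^*$ and $\varepsilon>0$, pick $f\in \Ltwo$ with $\|Tf-\varphi\|_{W^*} < \varepsilon/2$ and then $g\in \Linf$ with $\|f-g\|_{\Ltwo} < \varepsilon/(2C)$. The continuity of $T$ then yields $\|Tg - \varphi\|_{W^*} < \varepsilon$, proving the claim.

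I expect the only mildly delicate step to be the density of $\Ltwo$ in $W^*$; the other two steps are routine. Reflexivity of $W$ is what makes the Hahn-Banach argument work in norm (as opposed to only in the weak-$*$ sense), and the density of $\Cinf \subset W$ in $\Ltwo$, which is essentially built into the very definition $W = \overline{\Cinf}^{\,H^s(\Omega)}$, is what ensures the annihilator computation collapses to $g=0$.
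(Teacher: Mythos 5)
Your proof is correct, but it follows a different route than the paper. The paper disposes of the lemma in one line by citing the known density of $\Linf$ in $H^{-1}(\Omega)$ together with the inclusion $W^* \subset H^{-1}(\Omega)$, i.e.\ it reduces to the classical $s=1$ result. You instead give a self-contained argument through the Gelfand triple $W \hookrightarrow \Ltwo \hookrightarrow W^*$: continuity of $T\colon \Ltwo \to W^*$, density of $T(\Ltwo)$ in $W^*$ via reflexivity of the Hilbert space $W$ and Hahn--Banach (any $g\in W$ annihilating $T(\Ltwo)$ satisfies $\io fg\dd x=0$ for all $f\in\Ltwo$, hence $g=0$ in $\Ltwo$ and thus in $W$, the embedding being injective), and finally density of $\Linf$ in $\Ltwo$ by truncation. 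What your approach buys is independence from external references and, in fact, a cleaner handling of the norms: since $H_0^1(\Omega)\hookrightarrow W$, the $H^{-1}$-norm is \emph{weaker} than the $W^*$-norm on $W^*$, so the paper's reduction implicitly requires an extra argument to upgrade $H^{-1}$-approximation to $W^*$-approximation, whereas your argument works directly in the $W^*$-norm. What the paper's approach buys is brevity and the reuse of a standard fact. One cosmetic remark: the sentence ``Hence $T$ is injective on $\Ltwo$ is not quite what is needed'' should be cleaned up, and you may want to state explicitly that the annihilator computation uses $f=g\in\Ltwo$ as a test function; otherwise the argument is complete.
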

	\begin{proof} This follows from density of $\Linf$ in $H^{-1}(\Omega)$ and $W^* \subset H^{-1}(\Omega)$.
	\end{proof}
	
	\begin{lemma}\label{lm:inequPosPart}
		Let $w\in W$. Then $ \|w_+\|_W^2 \le (w,w_+)_W \le \|w\|_W^2$.
	\end{lemma}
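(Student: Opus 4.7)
The plan is to reduce everything to the single pointwise inequality
$(w_+(x)-w_+(y))\,(w_-(x)-w_-(y)) \le 0$ for a.e.\ $x,y$,
which then yields both desired inequalities through one simple algebraic split.

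First I would observe that each of the equivalent norms defining $W$ has the nonlocal "Dirichlet form" structure
\[
(u,v)_W \;=\; \tfrac{1}{2}\iint (u(x)-u(y))(v(x)-v(y))\,J(x,y)\,dy\,dx \;+\; \int \kappa(x)\,u(x)v(x)\,dx,
\]
for a nonnegative symmetric kernel $J$ and nonnegative weight $\kappa$. This is explicit in \eqref{eq:spectralInt} for the spectral norm, is immediate for the Gagliardo norm with $J(x,y)=c_{d,s}|x-y|^{-d-2s}$ and $\kappa\equiv 1$, and can be read off the $\tilde H^s(\Omega)$ norm after restricting to functions vanishing outside $\Omega$ and folding the exterior integrals into $\kappa$. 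The pointwise argument below will therefore cover all three cases uniformly.

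Next I would establish the key lemma $(w_+,w_-)_W \le 0$. Since $w_+ w_- = 0$ pointwise, the $\kappa$-term vanishes. For the double-integral term I would do a four-case analysis on the signs of $w(x)$ and $w(y)$: if both are of the same sign then one of the two factors $w_\pm(x)-w_\pm(y)$ is identically zero; if $w(x)\ge 0 \ge w(y)$ then the product equals $w_+(x)\cdot(-w_-(y)) \le 0$, and symmetrically in the other mixed case. Integrating against $J\ge 0$ then gives $(w_+,w_-)_W \le 0$.

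Finally I would conclude by writing $w = w_+ - w_-$ and using bilinearity. For the lower bound,
\[
(w,w_+)_W \;=\; \|w_+\|_W^2 - (w_-,w_+)_W \;\ge\; \|w_+\|_W^2.
\]
For the upper bound, expanding $\|w\|_W^2 = \|w_+\|_W^2 - 2(w_+,w_-)_W + \|w_-\|_W^2$ yields
\[
\|w\|_W^2 - (w,w_+)_W \;=\; \|w_-\|_W^2 - (w_+,w_-)_W \;\ge\; 0.
\]

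The only real obstacle is the sign bookkeeping in the pointwise case analysis and making sure the unified Dirichlet-form representation is valid for whichever of the equivalent norms is chosen; both are essentially verifications rather than deep difficulties.
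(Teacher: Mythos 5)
Your proposal is correct. It rests on the same underlying pointwise observation as the paper — that the cross term $(w_+(x)-w_+(y))\,(w_-(x)-w_-(y))$ has a fixed sign, which is exactly equivalent to the paper's inequality $(w_+(x)-w_+(y))^2\le (w_+(x)-w_+(y))(w(x)-w(y))$ — but you organize it differently: you isolate the single key inequality $(w_+,w_-)_W\le 0$ and obtain \emph{both} bounds from it by bilinearity, whereas the paper proves the first inequality directly from the pointwise estimate (treating the $L^2$-part separately) and simply cites \cite[Lemma 2.4]{spatSparse} for the second. What your route buys is a self-contained proof of the upper bound and a uniform treatment of the three admissible inner products via the representation $(u,v)_W=\tfrac12\iint (u(x)-u(y))(v(x)-v(y))J\,\dd y\,\dd x+\int\kappa\,uv\,\dd x$, including the correct observation that for $\tilde H^s(\Omega)$ the exterior integrals fold into a nonnegative weight $\kappa$; the paper's version is shorter because it delegates that second inequality. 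Two cosmetic points to watch: the paper's convention appears to be $w_-\le 0$ (so $w=w_++w_-$), while you use $w=w_+-w_-$ with $w_-\ge 0$ — your algebra is consistent with your convention; and your expansion tacitly uses $w_+,w_-\in W$, which is implicit in the lemma statement and follows from $|w_+(x)-w_+(y)|\le |w(x)-w(y)|$ together with \eqref{eq:equiv_spaces}, but is worth a sentence.
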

	\begin{proof}
		This is similar to \cite[Lemma 2.6 (b)]{30fracRelCap}. One can directly see that 
		\[\io w_+^2 \dd x \le \io w w_+ \dd x \le \io w^2\dd x,\]
		so the inequalities hold for the $L^2(\Omega)$-part in the $W$-norm. 
		For the first inequality, one can easily see that $(w_+(x)-w_+(y))^2\le (w_+(x)-w_+(y))\cdot (w(x)-w(y))$ a.e. in $\Omega$. The result then follows from the definition of the inner products for $W$, using the integral formulation in the spectral case $\specHs$.     
		The second inequality was shown in \cite[Lemma 2.4]{spatSparse}. 
	\end{proof}
	
	\begin{lemma}\cite[Remark 2.5]{30fracRelCap}\label{lm:prodOfWFuns}
		Let $w,z \in W\cap L^{\infty}(\Omega)$. Then it holds $wz\in W$.
	\end{lemma}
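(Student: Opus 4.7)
The statement should reduce to two observations: $wz$ lies in $L^2(\Omega)$ because $z$ is bounded, and the seminorm part of the $W$-norm of $wz$ can be controlled via a product-rule type splitting. The plan is to work with whichever of the equivalent integral expressions characterizes $W$ (the Gagliardo seminorm for $H^s(\Omega)$ / $\tilde H^s(\Omega)$, or the integral representation \eqref{eq:spectralInt} for $\specHs$), since all of them have the common form
\[
\|w\|^2 = \tfrac12\io\io |w(x)-w(y)|^2\,K(x,y)\,\dd y\,\dd x + \io \kappa(x)|w(x)|^2\,\dd x
\]
for a non-negative symmetric kernel $K$ and a non-negative weight $\kappa$ (in the $H^s(\Omega)$ case, $K(x,y) = c_{d,s}|x-y|^{-d-2s}$ and $\kappa \equiv 1$).

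The key pointwise identity is the discrete product rule
\[
w(x)z(x) - w(y)z(y) = w(x)\bigl(z(x)-z(y)\bigr) + z(y)\bigl(w(x)-w(y)\bigr),
\]
to which I would apply $(a+b)^2 \le 2a^2 + 2b^2$. Multiplying by $K(x,y)$ and integrating, and using $|w(x)| \le \|w\|_{L^\infty(\Omega)}$ and $|z(y)| \le \|z\|_{L^\infty(\Omega)}$ to pull constants outside, yields
\[
\tfrac12\io\io |w(x)z(x)-w(y)z(y)|^2 K(x,y)\,\dd y\,\dd x \le 2\|w\|_{\Linf}^2 [z]^2 + 2\|z\|_{\Linf}^2 [w]^2,
\]
where $[\cdot]$ denotes the kernel seminorm. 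The $L^2$ contribution is bounded by $\|z\|_{\Linf}^2 \|w\|_{L^2(\Omega)}^2$. Altogether, $\|wz\|_W$ is finite, with a bound of the form $\|wz\|_W \lesssim \|w\|_{\Linf}\|z\|_W + \|z\|_{\Linf}\|w\|_W$.

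The only remaining issue is to confirm membership in $W = H_0^s(\Omega)$ rather than in the larger space $H^s(\Omega)$. For $s \in (0,\tfrac12]$ this is automatic from \eqref{eq:equiv_spaces}. For $s \in (\tfrac12,1)$ the characterization $W = \tilde H^s(\Omega)$ from \eqref{eq:intHs} makes it easy: extending $w$ and $z$ by zero outside $\Omega$ gives functions in $H^s(\R^d)$, their pointwise product is again zero outside $\Omega$, and applying the same factorization argument on $\R^d$ shows the product has finite $H^s(\R^d)$-norm, hence lies in $\tilde H^s(\Omega)$.

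The only mild obstacle is bookkeeping across the different equivalent definitions of $W$, but once one observes that all of them fit the same kernel-integral template, the product-rule estimate is essentially mechanical and the lemma follows without further work.
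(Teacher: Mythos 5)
Your argument is correct. Note that the paper offers no proof of this lemma at all: it is quoted verbatim from \cite[Remark 2.5]{30fracRelCap}, so there is no in-paper argument to compare against; your discrete product rule $w(x)z(x)-w(y)z(y)=w(x)\bigl(z(x)-z(y)\bigr)+z(y)\bigl(w(x)-w(y)\bigr)$ together with $(a+b)^2\le 2a^2+2b^2$ is exactly the standard proof that $H^s\cap L^\infty$ is an algebra, which is what the cited remark rests on. The one point that genuinely required the care you gave it is membership in $W=H_0^s(\Omega)$ rather than mere finiteness of a Gagliardo-type seminorm, and you resolved it correctly: for $s\le\frac12$ via $H_0^s(\Omega)=H^s(\Omega)$ from \eqref{eq:equiv_spaces}, and for $s>\frac12$ via the zero-extension characterization of $\tilde H^s(\Omega)$, where the same estimate on $\R^d$ applies because the extensions remain bounded and the product still vanishes outside $\Omega$. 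Once $wz\in H_0^s(\Omega)$ is established this way, the separate verification against the spectral representation \eqref{eq:spectralInt} is superfluous, since all admissible inner products on $W$ are equivalent; so your ``kernel template'' remark is harmless but not needed.
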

	
	\begin{lemma}\label{lm:zDivW}
		Let $w,z\in W$, $w\in\Linf$ and $z\ge \epsilon$ for some $\epsilon>0$. Then it holds $\frac{w}{z}\in W$.
	\end{lemma}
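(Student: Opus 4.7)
The plan is to write $\tfrac{w}{z} = w \cdot F(z)$ for a carefully chosen Lipschitz function $F\colon \R \to \R$ with $F(0) = 0$ and $F(t) = 1/t$ on $[\epsilon, \infty)$, and then to invoke Lemma~\ref{lm:prodOfWFuns}. A concrete choice is $F(t) = t/\epsilon^2$ for $|t| \le \epsilon$ and $F(t) = 1/t$ for $|t| \ge \epsilon$; a direct check gives Lipschitz constant $L = 1/\epsilon^2$, and $F(0) = 0$ upgrades this to the pointwise bound $|F(t)| \le L|t|$ for all $t\in\R$. Since $z \ge \epsilon$ a.e.\ on $\Omega$, one has $F(z) = 1/z$ pointwise, so $F(z) \in \Linf$ with $\|F(z)\|_{\Linf} \le 1/\epsilon$.

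The key step is to verify $F(z) \in W$. Both the Gagliardo norm \eqref{eq:def_frac_int} and the spectral integral representation \eqref{eq:spectralInt} write the seminorm as $\tfrac{1}{2}\iint_{\Omega \times \Omega} |u(x) - u(y)|^2 K(x, y)\, dy\, dx$ for a symmetric non-negative kernel $K$, possibly augmented by a $\kappa$-weighted $L^2$-term. The Lipschitz estimate $|F(z)(x) - F(z)(y)|^2 \le L^2 |z(x) - z(y)|^2$ combined with the pointwise bound $|F(z)|^2 \le L^2 z^2$ therefore controls every term in the $W$-norm of $F(z)$ by $L$ times the corresponding term for $z$, which is finite by assumption. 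For $\tfrac{1}{2} < s < 1$, where $W = \tilde H^s(\Omega)$, I additionally need the zero-extension $\widetilde{F(z)}$ to lie in $H^s(\R^d)$; but since $F(0) = 0$, one has $\widetilde{F(z)} = F(\tilde z)$, with $\tilde z \in H^s(\R^d)$ the zero-extension of $z$ granted by \eqref{eq:intHs}, and the same Lipschitz estimate applied on all of $\R^d$ yields $F(\tilde z) \in H^s(\R^d)$.

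Finally, Lemma~\ref{lm:prodOfWFuns} applied to $w, F(z) \in W \cap \Linf$ gives $w \cdot F(z) \in W$, and this agrees with $w/z$ a.e.\ on $\Omega$, completing the proof.

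The principal obstacle is the case $\tfrac{1}{2} < s < 1$: the pointwise Lipschitz estimate alone places $F(z)$ only in $H^s(\Omega)$, and the zero-trace information encoded in $\tilde H^s(\Omega)$ must still be recovered. The identity $\widetilde{F(z)} = F(\tilde z)$, which hinges on the condition $F(0) = 0$, is what transports the $H_0^s$-property from $z$ to $F(z)$; this is the reason I prefer the above truncated reciprocal over, say, $1/\max(t, \epsilon)$, which would not vanish at $0$ and whose composition with $z$ would therefore not lie in $\tilde H^s(\Omega)$.
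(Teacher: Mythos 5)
Your proposal is correct, and its core is the same as the paper's: reduce to showing that the (truncated) reciprocal of $z$ lies in $W\cap\Linf$, control its increments by those of $z$ using the lower bound $z\ge\epsilon$ (constant of order $\epsilon^{-2}$), and conclude with Lemma \ref{lm:prodOfWFuns}. Where you differ is in the packaging: the paper simply notes that membership in $W$ is independent of the chosen realization, estimates the Gagliardo seminorm of $1/z$ on $\Omega$ via $\frac{1}{z(x)}-\frac{1}{z(y)}=\frac{z(y)-z(x)}{z(x)z(y)}$, and stops; you instead compose with a globally Lipschitz function $F$ with $F(0)=0$ that agrees with $t\mapsto 1/t$ on $[\epsilon,\infty)$, which lets you transport the estimate to each realization, in particular to $\tilde H^s(\Omega)$ via $\widetilde{F(z)}=F(\tilde z)$ and \eqref{eq:intHs}. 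What this buys is an explicit treatment of the zero-trace constraint for $\frac12<s<1$, which the paper's one-line reduction (showing only finiteness of the $H^s(\Omega)$-norm, which identifies $W$ by \eqref{eq:equiv_spaces} only when $s<\frac12$) glosses over; it is worth noting, though, that this case is in fact vacuous, since for $s\ge\frac12$ no $z\in\tilde H^s(\Omega)$ on a bounded Lipschitz domain can satisfy $z\ge\epsilon$ a.e.\ in $\Omega$ (its zero extension would jump across $\partial\Omega$, which is incompatible with $H^s(\R^d)$), so the paper's shorter argument loses nothing. Two cosmetic points: the squared increments are controlled with constant $L^2=\epsilon^{-4}$ (you write ``$L$ times the corresponding term''), and for the spectral realization your conclusion really rests on the set equality $\specHs=\tilde H^s(\Omega)$ rather than on finiteness of the integral form \eqref{eq:spectralInt} alone.
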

	\begin{proof}
		Since this result is independent of the choice for $W$, it suffices to consider the integral fractional Sobolev space $H_0^s(\Omega)$. Due to the previous Lemma \ref{lm:prodOfWFuns}, it is also enough to show $\frac{1}{z}\in W$. This follows from
		\begin{align*} \left\|\frac{1}{z}\right\|_{H^s(\Omega)}^2 &= \int_{\Omega}\int_{\Omega} \frac{\left(\frac{1}{z(x)}-\frac{1}{z(y)}\right)^2}{|x-y|^{d+2s}}\dd y\dd x = \int_{\Omega}\int_{\Omega} \frac{\left(\frac{z(y)-z(x)}{z(x)z(y)}\right)^2}{|x-y|^{d+2s}}\dd y\dd x  \\
			&\le \frac{1}{\epsilon^2}  \int_{\Omega}\int_{\Omega} \frac{(z(y)-z(x))^2}{|x-y|^{d+2s}}\dd y\dd x = \frac{1}{\epsilon^2} \|z\|_{H^s(\Omega)}^2 < \infty .\end{align*}
	\end{proof}

	\begin{lemma}\cite[Lemma 2.8]{30fracRelCap}\label{lm:maxMinInW}
		Let $w,z  \in W$ with $w,z \ge 0$. Then $\max(w,z)$ and $\min(w,z)$ are in $W$.
	\end{lemma}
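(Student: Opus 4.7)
The plan is to reduce both statements to the fact that $W$ is closed under the positive-part operation. Using the identities
\[
\max(w,z) = z + (w-z)_+ \qquad \text{and} \qquad \min(w,z) = w+z-\max(w,z),
\]
together with the vector-space structure of $W$ (so that $w-z\in W$), it suffices to prove that $u_+ \in W$ whenever $u\in W$.

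For this stability property I would re-use the pointwise argument already appearing inside Lemma \ref{lm:inequPosPart}. The key estimate is the $1$-Lipschitz bound $(u_+(x)-u_+(y))^2 \leq (u(x)-u(y))^2$, which holds for a.a.\ $x,y\in\Omega$. Integrated against the nonnegative kernel $|x-y|^{-(d+2s)}$ in the integral formulation \eqref{eq:def_frac_int}, or against the nonnegative kernel $J$ in the spectral formulation \eqref{eq:spectralInt}, this bounds the Gagliardo-type semi-norm of $u_+$ by that of $u$. Combined with the trivial pointwise estimate $u_+^2\leq u^2$ for the $L^2$-part of the norm, one obtains $\|u_+\|_W \leq \|u\|_W < \infty$, hence $u_+ \in W$.

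Applying this with $u = w-z$ gives $(w-z)_+ \in W$, whence $\max(w,z) = z + (w-z)_+ \in W$ and then $\min(w,z) = w+z-\max(w,z) \in W$. I do not foresee a serious obstacle. The only point of care is that the Lipschitz estimate on $t \mapsto t_+$ has to be invoked pointwise before integration, so that the same argument transfers uniformly to each admissible choice of norm on $W$; since in both the integral and spectral formulations the relevant kernels are nonnegative, this poses no difficulty. Note also that the hypothesis $w,z\geq 0$ is not in fact needed in this approach.
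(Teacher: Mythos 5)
The paper does not prove this lemma at all --- it is quoted from \cite[Lemma 2.8]{30fracRelCap} --- so your self-contained argument is welcome, and its core (the reduction $\max(w,z)=z+(w-z)_+$, $\min(w,z)=w+z-\max(w,z)$ plus the pointwise $1$-Lipschitz estimate $|u_+(x)-u_+(y)|\le|u(x)-u(y)|$) is the standard and correct route, essentially the one behind the cited result.

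There is, however, one genuine gap in your concluding inference for the range $\tfrac12<s<1$. There $W=H_0^s(\Omega)=\tilde H^s(\Omega)$ is a \emph{proper closed subspace} of $H^s(\Omega)$ (for instance, constants have zero Gagliardo seminorm over $\Omega\times\Omega$ but do not lie in $H_0^s(\Omega)$), so the step ``$\|u_+\|_W\le\|u\|_W<\infty$, hence $u_+\in W$'' is not valid if the norm is computed via the formulation \eqref{eq:def_frac_int} on $\Omega\times\Omega$: that estimate only shows $u_+\in H^s(\Omega)$, not membership in the closed subspace $W$. The same caveat applies to arguing membership from finiteness of the spectral expression \eqref{eq:spectralInt}, which the paper introduces as a formula for the norm on $\specHs$, not as a membership criterion for arbitrary $L^2$-functions. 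The repair is short and stays within your plan: for $s>\tfrac12$ use the characterization $\tilde H^s(\Omega)=\{v\in H^s(\R^d): v=0 \text{ on } \R^d\setminus\Omega\}$ from \eqref{eq:intHs}. If $u=w-z\in W$, its extension by zero $\tilde u$ lies in $H^s(\R^d)$; the pointwise Lipschitz estimate applied to the Gagliardo seminorm over $\R^d\times\R^d$ gives $(\tilde u)_+\in H^s(\R^d)$, and $(\tilde u)_+=\widetilde{u_+}$ vanishes outside $\Omega$, hence $u_+\in\tilde H^s(\Omega)=W$; the spectral case then follows from the set identity \eqref{eq:equiv_spaces} rather than from \eqref{eq:spectralInt}. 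For $0<s\le\tfrac12$ one has $H_0^s(\Omega)=H^s(\Omega)$, so your original argument with \eqref{eq:def_frac_int} is already complete. With this one-line fix the proof is correct, and your observation that the hypothesis $w,z\ge0$ is not needed also stands, since $(w-z)_+$ vanishes outside $\Omega$ whenever $w-z$ does.
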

	
	\begin{lemma} \label{lm:phiFracInCinf}
		Let $\varphi\in C_c^{\infty}(\R^d)$. Then it holds \[x \mapsto \int_{\R^d} \frac{(\varphi(x)-\varphi(y))^2}{|x-y|^{d+2s}}\dd y \qquad \in L^{\infty}(\Omega) \]and thus also 
		\[x \mapsto \int_{\Omega} \frac{(\varphi(x)-\varphi(y))^2}{|x-y|^{d+2s}}\dd y \qquad \in \Linf.\]
		For the spectral case, i.e. if $W$ is equipped with the $\specHs$-inner product, it also holds 
		\[x \mapsto \int_{\Omega} (\varphi(x)-\varphi(y))^2 J(x,y) \dd y \qquad \in L^{\infty}(\Omega).\]
	\end{lemma}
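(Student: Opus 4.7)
The plan is to prove all three assertions by splitting the domain of integration into a near-diagonal piece and a far piece, and on each piece exploiting a different property of $\varphi$: the Lipschitz bound $|\varphi(x)-\varphi(y)|\le L|x-y|$ with $L=\|\nabla\varphi\|_{L^\infty(\R^d)}$ near the diagonal, and the uniform bound $|\varphi(x)-\varphi(y)|\le 2\|\varphi\|_{L^\infty(\R^d)}=:2M$ away from it.

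For assertion (1), I would fix $x\in\Omega$ and write
\[
\int_{\R^d}\frac{(\varphi(x)-\varphi(y))^2}{|x-y|^{d+2s}}\,dy = \int_{|x-y|<1}\cdots\,dy + \int_{|x-y|\ge 1}\cdots\,dy.
\]
On the first piece the Lipschitz estimate gives an integrand bounded by $L^2|x-y|^{2-d-2s}$, and passing to polar coordinates this yields a finite bound $\frac{L^2|S^{d-1}|}{2-2s}$ because $s<1$. On the second piece the $L^\infty$-bound yields an integrand bounded by $4M^2|x-y|^{-d-2s}$, and polar coordinates now give $\frac{4M^2|S^{d-1}|}{2s}$ because $s>0$. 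Crucially both bounds are independent of $x$, so the sum is in $L^\infty(\Omega)$ (indeed in $L^\infty(\R^d)$).

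Assertion (2) is then immediate, since pointwise in $x$ the $\Omega$-integral is dominated by the $\R^d$-integral in (1). For assertion (3) I would use the known pointwise upper bound $J(x,y)\le C\,|x-y|^{-d-2s}$ for the spectral kernel (which follows, for instance, from dominating the Dirichlet heat kernel on $\Omega$ by the whole-space heat kernel in the subordination formula used in \cite{nonhomBC, anote}). With this bound in hand, the same two-region splitting used for (1) applies verbatim, multiplied by $C$.

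The main obstacle is assertion (3): it hinges on having an explicit pointwise upper estimate for the spectral kernel $J$ of the form $J(x,y)\lesssim|x-y|^{-d-2s}$. If such a bound were not available from the cited references, one would need to substitute it by an argument based on subordination $J(x,y)=\int_0^\infty p_D^t(x,y)\,d\nu_s(t)$ with the stable subordinator measure $\nu_s$, together with the trivial estimate $p_D^t(x,y)\le p_{\R^d}^t(x,y)$ for the Dirichlet heat kernel, from which the $|x-y|^{-d-2s}$ upper bound drops out. Assertions (1) and (2) are routine in comparison.
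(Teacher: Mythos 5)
Your proposal is correct and follows essentially the same route as the paper: the near/far split at radius one with the gradient (Lipschitz) bound near the diagonal and the sup-norm bound far away, both uniform in $x$, and for the spectral case the reduction to the integral case via the kernel estimate $J(x,y)\le c\,|x-y|^{-d-2s}$. The bound on $J$ you were worried about is indeed available from the cited literature — the paper quotes it directly as \cite[Eq.\ (33)]{nonhomBC} — so no subordination argument is needed.
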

	\begin{proof} 
		Let $\varphi \in C_c^{\infty}(\R^d)$, $x\in\Omega$. 
		For each $z\in B_1(0)$, define $g\colon [0,1] \to \R, g(t)=\varphi(x+tz)$. Then 
		\begin{multline*} \varphi(x+z)-\varphi(x) = g(1)-g(0) = \int_0^1g'(t) \dd t = \int_0^1 \nabla \varphi(x+tz)\cdot z \dd t \\
			\le \int_0^1 | \nabla \varphi(x+tz)| |z| \dd t \le \|\nabla \varphi\|_{L^{\infty}(\R^d)} |z|. \end{multline*}
		Next, we note that 
		\[ \int_{B_1(0)} \frac{1}{|z|^{d+2(s-1)}}<\infty \qquad \text{and} \qquad \int_{B_1(0)^c} \frac{1}{|z|^{d+2s}}<\infty. 
		\]
		Using the above estimates together with change of variables, we obtain
		\begin{align*}
			\int_{B_1(x)} \frac{(\varphi(x)-\varphi(y))^2}{|x-y|^{d+2s}} \dd y &= \int_{B_1(x)} \frac{(\varphi(x)-\varphi(x+z))^2}{|z|^{d+2s}}\dd z \\
			&\le \int_{B_1(0)} \frac{\|\nabla\varphi\|_{L^{\infty}(\R^d)}^2}{|z|^{d+2(s-1)}}\dd t\dd z \le c \|\nabla\varphi\|_{L^{\infty}(\R^d)}^2
		\end{align*} 
		for some constant $c\ge 0$. Furthermore, also 
		\[  \int_{\R^d \setminus B_1(0)} \frac{(\varphi(x)-\varphi(y))^2}{|x-y|^{d+2s}} \dd y \le \int_{\R^d \setminus B_1(0)} \frac{4 \|\varphi\|_{L^{\infty}(\R^d)}^2}{|x-y|^{d+2s}} \dd y  \le C  \|\varphi\|_{L^{\infty}(\R^d)}^2 \]
		is bounded.
		
		Since $x$ was arbitrary, the statement for the integral fractional Sobolev spaces follows.\\
		For the spectral case, we use the estimate from \cite[Eq. (33)]{nonhomBC} to obtain
		\[ \int_{\Omega } (\varphi(x)-\varphi(y))^2 J(x,y) \dd y \le c  \int_{\Omega } \frac{(\varphi(x)-\varphi(y))^2}{|x-y|^{d+2s}}\dd y.\]
		Thus, the result follows from the estimations above.
	\end{proof}

	\section{Fractional capacities}\label{sec:cap}
	Next, some definitions and results about fractional capacities and capacitary measures are given.
	Let $\Omega$ be a bounded Lipschitz domain and we choose an inner product for $W$ as described in the previous section. 
	The reason to consider different choices of inner products is that later on we investigate a minimization problem where the minima depend on the choice of the inner product.
	
	\begin{definition}
		Let $K\subset \Omega$ be compact. For the fractional Sobolev space $W$, the fractional capacity of the set $K$ is defined as   \begin{align*}  \caps(K) \coloneqq \inf \{\|w\|_{W}^2: w \in \Cinf, w\geq 1_K \}.  \end{align*}
	\end{definition}
	From this definition one can observe that for different choices of possible inner products for W, equivalence of norms translates to equivalence of capacities.
	This justifies to treat the different choices of inner products simultaneously in the following analysis most of the time. \\
	The fractional capacity can be expressed equivalently as presented in \cite[Theorem 2.1, subsequent comment]{126fracCap}.
	\begin{lemma}\cite[Theorem 2.1]{126fracCap}\label{lm:equivCaps} Let $K\subset\Omega$ be compact.
		It holds
		\begin{align*}
			\caps(K) &= \inf \{\|w\|_{W}^2: w \in W, w=1 \text{ in a neighbourhood of } K \text{ and } 0\le w\le 1 \text{  a.e. on } \Omega \} \\
			&= \inf \{\|w\|_{W}^2: w \in \Cinf, w=1 \text{ in a neighbourhood of } K \text{ and } 0\le w\le 1 \text{  a.e. on }\Omega \}.
		\end{align*}
	\end{lemma}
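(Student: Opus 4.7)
The goal is to identify three infima. Writing
\begin{align*}
S_1 &:= \{w \in \Cinf : w \geq 1_K\}, \\
S_2 &:= \{w \in W : w = 1 \text{ on a neighbourhood of } K,\ 0 \leq w \leq 1\}, \\
S_3 &:= S_2 \cap \Cinf,
\end{align*}
the definition of $\caps$ gives $\caps(K) = \inf_{S_1}\|\cdot\|_W^2$. Since $S_3 \subset S_1 \cap S_2$ (any $w \in S_3$ satisfies $w \geq 1_K$ and is in $W$), one immediately has $\inf_{S_1}\|\cdot\|_W^2 \leq \inf_{S_3}\|\cdot\|_W^2$ and $\inf_{S_2}\|\cdot\|_W^2 \leq \inf_{S_3}\|\cdot\|_W^2$. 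What remains is, for each $w$ in $S_1$ (respectively $S_2$), to exhibit an approximating element of $S_2$ (respectively $S_3$) with norm converging to $\|w\|_W$.

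For $\inf_{S_2} \leq \inf_{S_1}$, take $w \in S_1$ and $\epsilon \in (0,1)$. Continuity of $w$ and $w \geq 1$ on the compact set $K$ yield an open neighbourhood $U \supset K$ with $w > 1 - \epsilon$ on $U$. The function $\tilde w_\epsilon := \min\bigl(w_+/(1-\epsilon),\ 1\bigr)$ lies in $W$ by Lemmas \ref{lm:inequPosPart} and \ref{lm:maxMinInW}, satisfies $0 \le \tilde w_\epsilon \le 1$, equals $1$ on $U$, and hence belongs to $S_2$. Combining Lemma \ref{lm:inequPosPart} with the pointwise $1$-Lipschitz estimate $|\min(a,1) - \min(b,1)| \le |a-b|$ inserted into the Gagliardo integral (and its spectral counterpart via \eqref{eq:spectralInt}) yields $\|\tilde w_\epsilon\|_W \le \|w\|_W/(1-\epsilon)$; letting $\epsilon \downarrow 0$ closes this direction.

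For $\inf_{S_3} \leq \inf_{S_2}$, fix $w \in S_2$ with $w=1$ on a neighbourhood $U$ of $K$, pick $\eta \in \Cinf$ with $\eta = 1$ on a smaller neighbourhood $V \ni K$, $\operatorname{supp}\eta \subset U$, $0 \le \eta \le 1$, and choose $\varphi_n \in \Cinf$ with $\varphi_n \to w$ in $W$, which is possible since $W = H_0^s(\Omega)$. Define
\[ \psi_n := \max\!\bigl(\eta,\ \min((\varphi_n)_+, 1)\bigr). \]
By Lemmas \ref{lm:inequPosPart} and \ref{lm:maxMinInW}, $\psi_n \in W$; by construction $0 \le \psi_n \le 1$, $\psi_n = 1$ on $V$, and $\operatorname{supp}\psi_n$ is compact in $\Omega$. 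Since $\eta \le 1 = w$ on $U$ and $\eta = 0 \le w$ off $U$, the same truncation operator applied to $w$ returns $w$; continuity of this operator on $W$ (see below) then forces $\psi_n \to w$ in $W$. Finally, since each $\psi_n$ is compactly supported in $\Omega$ and constantly equal to $1$ on $V \supset K$, convolution with a mollifier of radius smaller than both $\operatorname{dist}(V, \partial U)$ and $\operatorname{dist}(\operatorname{supp}\psi_n, \partial\Omega)$ produces elements of $S_3$ that converge to $\psi_n$ in $W$; a standard diagonal argument extracts a sequence in $S_3$ with norm tending to $\|w\|_W$.

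The main technical obstacle is the $W$-continuity — not merely boundedness — of the nonlinear maps $u \mapsto u_+$, $u \mapsto \min(u,1)$, and $u \mapsto \max(\eta, u)$. Each is pointwise $1$-Lipschitz, which handles the $L^2$ piece of the $W$-norm directly. For the Gagliardo seminorm (and the corresponding spectral integral in \eqref{eq:spectralInt}) one combines a.e.\ convergence of the composed functions with the uniform seminorm bound inherited from Lemma \ref{lm:inequPosPart} and a dominated convergence argument against the kernel $|x-y|^{-d-2s}$, splitting the domain of integration according to which truncation branch is active. This is where the bulk of the verification sits; the rest of the proof is a straightforward chain of approximations.
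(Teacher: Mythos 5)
Your skeleton is sound, and it is worth noting that the paper does not prove this lemma at all: it is simply quoted from \cite[Theorem 2.1]{126fracCap} (with its "subsequent comment"), so any self-contained argument is necessarily a different route. The chain $S_3\subset S_1\cap S_2$, the rescaled truncation $\min(w_+/(1-\epsilon),1)$ to pass from $S_1$ to $S_2$, and the cutoff-plus-truncation of a $\Cinf$-approximating sequence followed by mollification to pass from $S_2$ to $S_3$, reduced via the identity $\max(\eta,\min(w_+,1))=w$ to the $W$-continuity of the truncation operator, is a perfectly viable proof strategy.

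Two points need repair, one cosmetic and one substantive. The cosmetic one: you invoke Lemma \ref{lm:maxMinInW} to place $\min(w_+/(1-\epsilon),1)$ and $\min((\varphi_n)_+,1)$ in $W$, but that lemma requires both arguments to lie in $W$, and the constant $1$ is not in $W$ when $s>\frac12$ (where $W=\tilde H^s(\Omega)=\specHs$). This is harmless, since the $1$-Lipschitz composition estimate you already use for the norm bound also gives membership (for $u\ge 0$ vanishing outside $\Omega$, $\min(u,1)$ again vanishes outside $\Omega$ and its Gagliardo seminorm on $\R^d$ is controlled by that of $u$; for $s\le\frac12$ one argues in $H^s(\Omega)$, and the spectral case follows by norm equivalence), but the membership must be justified this way, not via Lemma \ref{lm:maxMinInW}. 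The substantive one is precisely the step you flag as the crux: the continuity of $u\mapsto u_+$, $u\mapsto\min(u,1)$, $u\mapsto\max(\eta,u)$ on $W$ does not follow from "a.e. convergence of the composed functions plus the uniform seminorm bound plus dominated convergence against the kernel". A uniform bound on seminorms is not a pointwise majorant (it only yields weak convergence along subsequences), and the kernel $|x-y|^{-d-2s}$ alone is not integrable near the diagonal even though the truncated functions are bounded by $1$, so as literally described the dominated convergence step fails. The standard patch: since $\varphi_n\to w$ in $W$, the difference quotients $g_n(x,y)=(\varphi_n(x)-\varphi_n(y))/|x-y|^{d/2+s}$ converge in $L^2(\Omega\times\Omega)$, hence along a subsequence they converge a.e. and are dominated by a single $G\in L^2(\Omega\times\Omega)$ (converse of the dominated convergence theorem); the pointwise bound $|T(u)(x)-T(u)(y)|\le |u(x)-u(y)|$ then dominates the difference quotients of $T(\varphi_n)$ by $G$, they converge a.e. to those of $T(w)$, and dominated convergence in $L^2(\Omega\times\Omega)$ gives seminorm convergence along that subsequence; a subsequence--subsequence argument upgrades this to the full sequence, and the $L^2$-part is immediate from the Lipschitz property. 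With this lemma in place (and with the mollifier radius chosen smaller than $\operatorname{dist}(K,\partial V)$, rather than $\operatorname{dist}(V,\partial U)$, so that the mollified functions are still identically $1$ on a neighbourhood of $K$), your argument is complete.
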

	
	Another definition stated in \cite[Lemma 3.5]{30fracRelCap} is also equivalent to the formulation above. 
	\begin{lemma}\label{lm:equivCapsFor30}
		Let $2s\le d$ and $K\subset \Omega$ be compact. Then 
		\[\caps(K) = \inf\{ \|w\|_W^2: w\in W \cap C_c(\bar\Omega), w\ge 1 \text{ on } K\}.\]
	\end{lemma}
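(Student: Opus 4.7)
The plan is to compare the infimum in the statement to the one from Lemma \ref{lm:equivCaps}, which already equals $\caps(K)$. The direction
\[\inf\{\|w\|_W^2 : w \in W \cap C_c(\bar\Omega),\, w \ge 1 \text{ on } K\} \le \caps(K)\]
is immediate: any $w \in \Cinf$ with $w = 1$ in a neighbourhood of $K$ and $0 \le w \le 1$ extends by zero to an element of $W \cap C_c(\bar\Omega)$ satisfying $w \ge 1$ on $K$, so the admissible set in the statement contains the set appearing in the second characterization of Lemma \ref{lm:equivCaps}.

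For the reverse inequality, I would use a scaled truncation. Given $w \in W \cap C_c(\bar\Omega)$ with $w \ge 1$ on $K$ and any $\epsilon > 0$, set $T(t) := \min(\max(t,0),1)$ and $v := T((1+\epsilon) w)$. Since $(1+\epsilon) w \ge 1+\epsilon > 1$ on the compact set $K$ and $w$ is continuous on $\bar\Omega$, the set $U := \{x \in \Omega : (1+\epsilon) w(x) > 1\}$ is an open neighbourhood of $K$ on which $v \equiv 1$; by construction $0 \le v \le 1$ pointwise. The core step is to prove $v \in W$ with $\|v\|_W \le (1+\epsilon)\|w\|_W$. Since $T$ is $1$-Lipschitz with $T(0) = 0$, the pointwise inequalities $|T(u(x)) - T(u(y))|^2 \le |u(x) - u(y)|^2$ and $|T(u)|^2 \le |u|^2$ bound both the $L^2$-part and the Gagliardo-type (respectively spectral) seminorm in every realization of $\|\cdot\|_W$; the integral representation \eqref{eq:spectralInt} handles the spectral case, and for $W = \tilde H^s(\Omega)$ one extends by zero to $\R^d$ so that $v$ remains in the zero-trace space.

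Then $v$ is admissible in the first characterization of Lemma \ref{lm:equivCaps}, yielding $\caps(K) \le \|v\|_W^2 \le (1+\epsilon)^2 \|w\|_W^2$. Taking the infimum over $w$ and letting $\epsilon \to 0$ gives the remaining inequality. The main obstacle is the Stampacchia-type contraction $\|T(u)\|_W \le \|u\|_W$: Lemma \ref{lm:maxMinInW} cannot be invoked directly because $1 \notin W$, but the bound is a routine consequence of the explicit integral representations of the various $W$-norms collected in Section \ref{sec:fracSob}.
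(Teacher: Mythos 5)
Your proof is correct and takes essentially the same route as the paper's: both reduce the nontrivial inequality to Lemma \ref{lm:equivCaps} by rescaling the competitor so that it exceeds $1$ on an open neighbourhood of $K$ and then truncating to $[0,1]$ without increasing the $W$-norm, while the easy inequality is just a comparison of admissible sets. The only cosmetic difference is that you perform the truncation in one step via the $1$-Lipschitz map $T$ fixing $0$ with an explicit $(1+\epsilon)$-scaling, whereas the paper splits it into passing to $w_+$ (Lemma \ref{lm:inequPosPart}), the scaling argument of \cite[Theorem 2.1]{126fracCap}, and the estimate $\|\min(w,1)\|_W\le\|w\|_W$.
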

	\begin{proof}
		As the infimum is taken over a larger set compared to the set in the definition of $\caps(K)$, one directly obtains ``$\ge$''. For the reverse direction ``$\le$'', note that we can replace $w\ge 1 $ on $K$ in the set on the right-hand side by $w\ge 1_K$ on $\Omega$ due to $w_+\in W\cap C_c(\bar\Omega)$ and $\|w_+\|_W \le \|w\|_W$ for all $w\in W$, see Lemma \ref{lm:inequPosPart}. Proceeding as in the proof of \cite[Theorem 2.1]{126fracCap}, we can assume $w\ge 1$ on a neighbourhood $U$ of $K$. As it also holds $\|\min(w,1)\|_W \le \|w\|_W$, we can assume $0\le w\le 1$. Then the statement follows with Lemma \ref{lm:equivCaps}.
	\end{proof}
	
	For general Borel sets, the notion of capacity is extended as follows.
	\begin{definition}\label{def:capOpen}
		For an open set $O\subseteq \Omega$ and a general Borel set $B\subseteq\Omega$, we define
		\[ \caps(O) \coloneqq \sup \{\caps(K): K\subset O\}  \]
		and
		\[\caps(B) \coloneqq \inf \{\caps(O): B\subseteq O, O \text{ open}.\}.  \]
	\end{definition}
	
	The next Lemma provides an equivalent definition in case $2s\le d$.
	
	\begin{lemma}\label{lm:capDefsOpen}
		Let $2s\le d$ and let $O \subseteq \Omega$ open. Then 
		\begin{align*} 
			\caps(O) &= \inf \{ \|w\|_W^2: w\in W, w\ge 1 \text{ a.e. on } O\} \\
			&= \inf \{ \|w\|_W^2: w\in W, w\ge 1_O \text{ a.e. on } \Omega\}.
		\end{align*}
	\end{lemma}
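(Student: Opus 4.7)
The plan is to argue in three steps. First I would show that the two infima on the right-hand side coincide. If $w \in W$ with $w \ge 1$ a.e.\ on $O$, then $w_+ \ge 1_O$ a.e.\ on $\Omega$ and $\|w_+\|_W \le \|w\|_W$ by Lemma \ref{lm:inequPosPart}; conversely $w \ge 1_O$ a.e.\ on $\Omega$ trivially implies $w \ge 1$ a.e.\ on $O$. Hence the two admissible sets yield the same infimum, which I denote by $I$, and it suffices to establish $\caps(O)=I$.

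For the estimate $\caps(O)\le I$, I would take any feasible $w$, use the positive-part trick once more, and then truncate via $w \mapsto \min(w,1)$ to reduce to $0\le w\le 1$ with $w=1$ a.e.\ on $O$, without increasing the $W$-norm. Since the open set $O$ is a neighbourhood of every compact $K\subset O$, Lemma \ref{lm:equivCaps} directly yields $\caps(K)\le \|w\|_W^2$. Taking the supremum over compact $K\subset O$ via Definition \ref{def:capOpen}, and then the infimum over $w$, gives $\caps(O)\le I$.

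For the reverse inequality $I\le \caps(O)$ (which is trivial if $\caps(O)=\infty$, so assume it is finite), I would exhaust $O$ by an increasing family of compact sets $K_n \nearrow O$, for example $K_n:=\{x\in O : d(x,O^c)\ge 1/n\}$. By Lemma \ref{lm:equivCapsFor30} combined with the same truncation as above, I can pick $v_n\in W$ with $0\le v_n\le 1$, $v_n=1$ a.e.\ on $K_n$ and $\|v_n\|_W^2 \le \caps(K_n)+1/n \le \caps(O)+1/n$. By the compact embedding $W\hookrightarrow L^2(\Omega)$ from Proposition \ref{prop:propFrac}(i), a subsequence converges weakly in $W$ and a.e.\ on $\Omega$ to some $\bar v$, with $\|\bar v\|_W^2 \le \caps(O)$ by weak lower semicontinuity. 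For every $m$ and all $n\ge m$ one has $v_n=1$ a.e.\ on $K_m$, so $\bar v=1$ a.e.\ on every $K_m$, and since $\bigcup_m K_m = O$ this forces $\bar v \ge 1_O$ a.e.\ on $\Omega$. Therefore $I\le \|\bar v\|_W^2 \le \caps(O)$.

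The main technical point is the truncation $\min(w,1)\in W$ with $\|\min(w,1)\|_W\le \|w\|_W$, which is not directly covered by Lemma \ref{lm:maxMinInW} because $1\notin W$. I would justify it by unwinding the concrete integral representation \eqref{eq:def_frac_int} (or the spectral integral formulation \eqref{eq:spectralInt}) of the $W$-norm and using that $t\mapsto \min(t,1)$ is a $1$-Lipschitz contraction: this gives the pointwise bounds $|\min(w,1)(x)-\min(w,1)(y)|\le |w(x)-w(y)|$ and $|\min(w,1)(x)|\le |w(x)|$ (the latter for $w\ge 0$), and the double-integral and $L^2$-parts of each admissible $W$-norm depend monotonically on these quantities, so the norm does not increase. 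Everything else in the argument is standard lower-semicontinuity and approximation.
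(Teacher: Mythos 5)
Your argument is correct, but it takes a genuinely different route from the paper. The paper does not prove the identity directly: it identifies the capacity used here with the relative capacity of the space $\tilde W^{s,2}$ from \cite{30fracRelCap}, using that $\tilde W^{s,2}=H_0^s(\Omega)$ for $2s\le d$ on a bounded Lipschitz domain and that the two capacities agree on compact sets (Lemma \ref{lm:equivCapsFor30}), and then quotes the corresponding statement for open sets from that reference; only the second equality is handled in-house via the $\|w_+\|_W\le\|w\|_W$ trick, exactly as you do. Your proof instead establishes both inequalities from scratch: the bound $\caps(O)\le I$ via positive part, the $1$-Lipschitz truncation $\min(\cdot,1)$ and Lemma \ref{lm:equivCaps}, and the bound $I\le\caps(O)$ via an exhaustion $K_n\nearrow O$, the uniform bound $\|v_n\|_W^2\le\caps(K_n)+1/n\le\caps(O)+1/n$, weak compactness, a.e.\ convergence through the compact embedding of Proposition \ref{prop:propFrac}, and weak lower semicontinuity of the norm. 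This buys self-containedness (and in fact barely uses $2s\le d$: your step 3 could invoke the original $\Cinf$ definition of $\caps(K_n)$ or Lemma \ref{lm:equivCaps} instead of Lemma \ref{lm:equivCapsFor30}), at the price of more work; the paper's route is shorter but rests entirely on the cited identification. Two small points to tighten: the truncation inequality $\|\min(w,1)\|_W\le\|w\|_W$ is indeed the key technical ingredient, and note that the paper itself already uses it without proof inside the proof of Lemma \ref{lm:equivCapsFor30}, so your justification via the integral/spectral representations is welcome; for membership $\min(w,1)\in W$ in the regime $\tfrac12<s<1$, where $W=\tilde H^s(\Omega)$, you should add the one-line observation that for $w\ge 0$ vanishing outside $\Omega$ the zero extension of $\min(w,1)$ coincides with $\min(\tilde w,1)$, so finiteness of the Gagliardo norm over $\R^d$ together with the vanishing condition gives membership; the spectral case then follows since $\specHs=\tilde H^s(\Omega)$ with equivalent norms in that range.
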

	\begin{proof}
		This follows from results in \cite{30fracRelCap}: There, a space $\tilde W^{s,2}$ with $\tilde{W}^{s,2}\subseteq H_0^s(\Omega)$ is used. For $2s\le d$ and $\Omega$ being a bounded Lipschitz domain, it holds  $\tilde{W}^{s,2}= H_0^s(\Omega)$ by \cite[Theorem 4.8, Example 4.11(a)]{30fracRelCap} and thus the results about $\tilde{W}^{s,2}$ from \cite{30fracRelCap} can be applied to our setting as well.
		As our notion of capacity and the one of \cite{30fracRelCap} coincide on compact sets by  Lemma \ref{lm:equivCapsFor30} and \cite[Lemma 3.5]{30fracRelCap}, this also holds on open sets by 
		Definition \ref{def:capOpen} and \cite[page 17]{30fracRelCap}. 
		This proves the first equality.\\  
		Direction ``$\le$'' of the second equality follows as the second set is a subset of the first one, and the reverse direction ``$\ge$'' is a consequence of $\|w_+\|_W \le \|w\|_W$.
	\end{proof}
	
	\begin{remark}
		It is not possible to generalize the previous lemma to the space \[\mathbb{H}^{\frac12}(\Omega) = H_{00}^{\frac{1}{2}}(\Omega)\coloneqq \left\{w \in H^{\frac{1}{2}}(\Omega): \int_{\Omega} \frac{w^2(x)}{\operatorname{dist}(x,\partial \Omega)}\dd x < \infty \right\},\] 
		as choosing sets $O \subseteq \Omega$ that extend up to the boundary leads to difficulties. Indeed, for $O=\Omega$, the sets on the right-hand side in the previous Lemma \ref{lm:capDefsOpen} are empty.
	\end{remark}
	
	Next, 
	We say that a property holds \textit{quasi everywhere} (q.e.) if there is a set $A$ of capacity zero such that the property holds everywhere on $\Omega\setminus A$.  
	
	A function $w\in W$ is called \textit{quasi continuous} (q.c.) if for every $\epsilon >0$ there is an open set $O$ with $\caps(O)<\epsilon$ such that $w$ is continuous on $\Omega \setminus O$.
	Every $w\in W$ has a q.c.\@ representative: This follows for $2s\le d$ by \cite[Theorem 3.7]{30fracRelCap} and for $2s>d$ by the embedding of $W$ into the continuous functions by Proposition \ref{prop:propFrac}.
	This representative is unique up to sets of capacity zero.
	From now on, functions $w\in W$ are identified with their q.c.\@ representative, such that pointwise values of $w$ are defined q.e.\@ on $\Omega$. \\
	A set $A\subseteq\Omega$ is \textit{quasi open} (q.o.) if for every $\epsilon>0$ there is an open set $O_{\epsilon}\subseteq\Omega$ such that $A\cup O_{\epsilon}$ is open and $\caps(O_{\epsilon})<\epsilon$.
	
	For a sequence $w_k\in W$ that converges to $w$ in $W$, there is a q.e.\@ convergent subsequence, see \cite[Lemma 3.8]{30fracRelCap}.
	Note that these notions depend on the choice of $s$.\\
	If $s>d/2$, then $W$ embeds into the continuous functions. Thus, there are no sets of capacity zero apart from the empty set, and therefore also q.o.\@ and q.c.\@ boils down to open and continuous.\\ 
	Next, we define capacitary measures.
	By a Borel measure we refer to a non-negative measure defined on the $\sigma$-algebra of Borel sets. A Radon measure denotes a regular Borel measure.
	\begin{definition}\label{def:capMeasure}
		The set $\M$ of capacitary measures is defined as the set of Borel measures $\mu$ that satisfy
		\begin{enumerate}[label=(\roman*)]
			\item $\mu(B)=0$ for all Borel sets $B\subseteq\Omega$ with $\caps(B)=0$,
			\item $\mu(B)=\inf\{\mu(A): A \text{ quasi open}, B\subseteq A\}$ for every Borel set $B\subseteq\Omega$.
		\end{enumerate}
	\end{definition}
	A measure $\mu$ is said to be in $W^*$ if there exists $f\in W^*$ such that 
	\begin{equation}\label{eq:forMeasureInDual}
		\braket{f,\varphi}_W = \io \varphi \dd\mu \qquad \forall \varphi \in C_c^{\infty}(\Omega).\end{equation}
	\begin{theorem}\label{tm:radonInCap}
		Let $\mu$ be a Radon measure in $W^*$. Then there is a constant $C>0$ such that 
		\[\mu(B) \le C\, \sqrt{\caps(B)} \qquad \text{for all Borel sets }B \subseteq \Omega, \] 
		and it holds $\mu \in \M$.
	\end{theorem}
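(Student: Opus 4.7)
The plan is to establish the capacity estimate first on compact sets, propagate it to open and then general Borel sets via the regularity of the Radon measure $\mu$, and finally deduce membership in $\mathcal{M}_0(\Omega)$.

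Let $f\in W^*$ be the functional representing $\mu$, so that $\braket{f,\varphi}_W=\io\varphi\dd\mu$ for every $\varphi\in C_c^{\infty}(\Omega)$. For a compact $K\subset\Omega$ and any $\varphi\in C_c^{\infty}(\Omega)$ with $0\le\varphi\le 1$ and $\varphi=1$ in a neighbourhood of $K$, the non-negativity of $\mu$ gives
\[
\mu(K)\le\io\varphi\dd\mu=\braket{f,\varphi}_W\le\|f\|_{W^*}\|\varphi\|_W.
\]
Passing to the infimum over admissible $\varphi$ and invoking the characterization of $\caps(K)$ from Lemma \ref{lm:equivCaps} yields $\mu(K)\le\|f\|_{W^*}\sqrt{\caps(K)}$, so the sought constant is $C:=\|f\|_{W^*}$.

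For an open set $O\subseteq\Omega$, inner regularity of the Radon measure $\mu$ combined with the supremum formulation $\caps(O)=\sup\{\caps(K):K\subset O\text{ compact}\}$ from Definition \ref{def:capOpen} produces $\mu(O)\le C\sqrt{\caps(O)}$. For a general Borel set $B\subseteq\Omega$, outer regularity of $\mu$ together with the corresponding infimum formulation of $\caps(B)$ in Definition \ref{def:capOpen} then delivers the full estimate $\mu(B)\le C\sqrt{\caps(B)}$.

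Finally, the capacity bound makes item (i) of Definition \ref{def:capMeasure} immediate, since $\caps(B)=0$ forces $\mu(B)=0$. For (ii), observe that every open set is quasi open, so the infimum of $\mu(A)$ over quasi open supersets $A\supseteq B$ is dominated by the infimum over open supersets, which equals $\mu(B)$ by outer regularity. For the reverse inequality I need to make sense of $\mu(A)$ for a quasi open $A$ that need not be Borel: choosing open sets $O_n$ with $\caps(O_n)<1/n$ and $A\cup O_n$ open, the Borel set $B':=\bigcap_n(A\cup O_n)$ satisfies $A\subseteq B'$ and $B'\setminus A\subseteq\bigcap_n O_n$, which has capacity zero and therefore, by (i), $\mu$-measure zero. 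This identifies $\mu(A)$ unambiguously with $\mu(B')$, and monotonicity then gives $\mu(A)\ge\mu(B)$, closing (ii). The main obstacle is exactly this technical point about $\mu$ being well-defined on the larger class of quasi open sets; everything else is a clean chain of duality, testing against admissible capacity competitors, and inner/outer regularity.
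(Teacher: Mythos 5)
Your proof is correct and takes essentially the same route as the paper's: the compact-set estimate by testing the dual pairing against capacity competitors, then inner regularity for open sets, monotonicity/outer regularity for general Borel sets, and finally properties (i) and (ii) of Definition \ref{def:capMeasure}. The only deviations are cosmetic — you use the characterization in Lemma \ref{lm:equivCaps} instead of a minimizing sequence $\varphi_n\ge 1_K$ from the original definition, and you add an explicit (and sound) justification that $\mu(A)$ is well defined for quasi open $A$, a measurability point the paper passes over with ``monotonicity of $\mu$''.
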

	
	\begin{proof}
		First, we show $\mu(B)\le \|f\|_{W^*} \sqrt{\caps(B)}$, where $f$ is given by \eqref{eq:forMeasureInDual}.\\
		Let $K\subset \Omega$ be compact. By definition of $\caps(K)$, there exists a sequence $(\varphi_n)\subset \Cinf$ with $\varphi_n\ge 1_K$ and $\|\varphi_n\|_W \to \sqrt{\caps(K)}$. Then
		\begin{align*}\mu(K)=\int_K\dd\mu \le \int_K \varphi_n \dd\mu \le \io\varphi_n  \dd\mu = \braket{f,\varphi_n}_W \le \|f\|_{W^*} \|\varphi_n\|_W \to \|f\|_{W^*} \sqrt{\caps(K)}.\end{align*}
		For an open set $O\subseteq \Omega$, we have 
		\[
		\mu(O) = \underset{K\subset O}{\sup} \mu(K) \le \|f\|_{W^*} \underset{K\subset O}{\sup} \sqrt{\caps(K)} = \|f\|_{W^*} \sqrt{\caps(O)}
		\]
		by inner regularity of Radon measures. For a general Borel set $B\subseteq \Omega$ we obtain by monotonicity of the measure $\mu$
		\[
		\mu(B)\le \underset{O\supset B}{\inf} \mu(O) \le  \underset{O\supset B}{\inf} \|f\|_{W^*} \sqrt{\caps(O)} = \|f\|_{W^*} \sqrt{\caps(B)} 
		\] 
		This shows the first statement of the theorem with $C=\|f\|_{W^*}$ and also implies $(i)$ in Definition \ref{def:capMeasure}.\\
		To show $(ii)$, we note that by regularity of $\mu$ it holds 
		\[\mu(B)=\inf\{\mu(O) : O\supseteq B, O \text{ open }\}, \] 
		which implies ``$\ge$''. The reverse inequality follows directly from monotonicity of $\mu$.
	\end{proof}

	\begin{lemma}\label{lm:approxSetPw}
		For all quasi-open sets $A\subseteq\Omega$ there exists an increasing sequence $(v_k)\in W$ with $v_k\ge 0$ such that $v_k\to 1_A$ pointwise q.e. in $\Omega$. 
	\end{lemma}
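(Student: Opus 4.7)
The plan is to reduce the claim to the construction of a single non-negative $u \in W$ with $\{u>0\} = A$ quasi-everywhere, and then to define $v_k := \min(ku, 1)$ for $k \in \N$. The sequence $(v_k)$ will be non-negative and increasing, will lie in $W$ by the standard Lipschitz truncation (the map $t \mapsto \min(kt,1)$ is Lipschitz with value $0$ at $0$, so $v_k$ is a Lipschitz post-composition of $u \in W$), and will converge pointwise q.e.\ to $1_{\{u>0\}} = 1_A$ since $ku(x) \ge 1$ eventually whenever $u(x) > 0$ while $v_k(x) = 0$ whenever $u(x) = 0$.

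To construct $u$, I will start from the definition of quasi-open: for each $n \in \N$, pick an open $O_n \subseteq \Omega$ with $A \cup O_n$ open and $\caps(O_n) < 4^{-n}$; replacing $O_n$ by $\bigcup_{j \ge n} O_j$, I may assume $O_n$ is decreasing with $\caps(O_n) \to 0$ and that the open sets $V_n := A \cup O_n$ are decreasing with $\bigcap_n V_n = A$ quasi-everywhere. Using Lemma \ref{lm:capDefsOpen} I pick $g_n \in W$ with $0 \le g_n \le 1$, $g_n \ge 1_{O_n}$ q.e., and $\|g_n\|_W^2 \le 2\caps(O_n) \to 0$, and extract a subsequence such that $g_n \to 0$ q.e. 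For each $n$ I build $\phi_n \in W$ with $0 \le \phi_n \le 1$, $\phi_n(x)=0$ for every $x \in V_n^c$, and $\phi_n(x) > 0$ for every $x \in V_n$, as a weighted series $\sum_j a_{n,j}\eta_{n,j}$, where $\{B_{n,j}\}_j$ is a countable cover of $V_n$ by open balls with $\overline{B_{n,j}} \subset V_n$ (e.g.\ rational balls contained in $V_n$), each $\eta_{n,j} \in \Cinf$ is a standard bump supported in $\overline{B_{n,j}}$ with $0 \le \eta_{n,j} \le 1$ and $\eta_{n,j} > 0$ on $B_{n,j}$, and the weights $a_{n,j} > 0$ are chosen so small that the sum converges in $W$ and has $L^\infty$-norm at most $1$.

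Setting $\psi_n := (\phi_n - g_n)_+ \in W$ (by Lemma \ref{lm:inequPosPart}), the crucial observations are: on $V_n^c$ one has $\phi_n = 0$; on $O_n$ one has $g_n = 1 \ge \phi_n$ q.e.; and $A^c \subseteq V_n^c \cup O_n$, so $\psi_n$ vanishes q.e.\ on $A^c$ for every $n$. Conversely, for every $x \in A$ outside the null set where $g_n \not\to 0$, $\phi_n(x) > 0$ for every $n$ while $g_n(x) \to 0$, so $\psi_n(x) > 0$ for all sufficiently large $n$. Hence the function $u := \sum_n 2^{-n}\psi_n/(1 + \|\psi_n\|_W)$ lies in $W$, is non-negative, vanishes q.e.\ on $A^c$, and is strictly positive q.e.\ on $A$, giving the required identity $\{u>0\} = A$ q.e. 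The main obstacle is producing $\phi_n$ strictly positive on the whole open set $V_n$ while keeping it in $W$, which is the reason for the weighted-series construction above; everything else follows from the lattice and truncation properties of $W$ collected in Section \ref{sec:fracSob}, in particular Lemmas \ref{lm:inequPosPart} and \ref{lm:maxMinInW}, together with the standard Lipschitz-composition rule for fractional Sobolev functions.
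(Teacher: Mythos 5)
Your reduction (build a single nonnegative $u\in W$ with $\{u>0\}=A$ up to sets of capacity zero, then take $v_k=\min(ku,1)$) is a reasonable strategy, but the construction of $u$ has a genuine gap at the decisive step. You set $\psi_n=(\phi_n-g_n)_+$ and claim that for q.e.\ $x\in A$ one has $\psi_n(x)>0$ for large $n$ because $\phi_n(x)>0$ for every $n$ while $g_n(x)\to 0$. This does not follow: $\phi_n(x)>0$ gives no lower bound that is uniform in $n$. The sets $V_n=A\cup O_n$ shrink, so if $x\in A$ satisfies $\operatorname{dist}(x,A^c)=0$ (typical when $A$ is quasi-open but not open), then $\operatorname{dist}(x,V_n^c)\to 0$; in addition the weights $a_{n,j}$ must be taken small to force $W$-convergence of the bump series, so $\phi_n(x)$ may tend to $0$ arbitrarily fast in $n$, whereas $g_n(x)\to 0$ only pointwise and without any rate ($\|g_n\|_W\to 0$ gives none). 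Hence it can happen that $g_n(x)\ge\phi_n(x)$ for every $n$, i.e.\ $\psi_n(x)=0$ for all $n$ and $u(x)=0$, and nothing in your argument excludes that this occurs on a subset of $A$ of positive capacity. You also cannot repair this by choosing one fixed function that is positive q.e.\ on $A$ and vanishes q.e.\ outside every $V_n$: such a function would already be the $u$ you are trying to construct, so that route is circular.

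The repair requires decoupling the two indices, which is exactly the structure of the paper's proof: for each fixed $n$ one uses an increasing family whose values tend to $1$ at every point of $A_n=A\cup O_n$ (the paper's $\varphi^n_k\nearrow 1_{A_n}$; in your setup $\min(m\phi_n,1)\nearrow 1_{V_n}$ as $m\to\infty$ would do), subtracts the fixed function $w_n$ (your $g_n$), and takes the supremum first in the inner index, which yields a limit $\ge 1-g_n(x)$ q.e.\ on $A$ while staying $\le 1_A$ q.e.; only afterwards does one let $n\to\infty$ and use $g_n\to 0$ q.e. With this modification (plus the separate, trivial case $d<2s$, where $A$ is open and Lemma \ref{lm:capDefsOpen} --- which is stated only for $2s\le d$ --- is not needed, and a justification that $v\mapsto\min(kv,1)$ maps $W$ into $W$, which the paper never states explicitly), your outline essentially becomes the paper's proof of Lemma \ref{lm:approxSetPw}.
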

	\begin{proof}
		We follow the lines of the proof of \cite[Lemma 2.1]{29capCompactness}. For $d\ge 2s$, let $A\subseteq \Omega$ be q.o.\@ and $(O_n)$ a sequence of open subsets of $\Omega$ with $\caps(O_n)<1/n$ and $A_n\coloneqq A\cup O_n$ open. Then for all $n\in\N$ there is an increasing sequence $(\varphi_k^n)\subset \Cinf$ with $\varphi_k^n \to 1_{A_n}$ pointwise q.e.\@ in $\Omega$ for $k\to\infty$.
		Due to $\caps(O_n,\Omega)<\frac{1}{n}$, by Lemma \ref{lm:capDefsOpen} there exists $w_n\in W$ with $w_n\ge 1$ q.e.\@ on $O_n$, $w_n\ge 0$ and $\|w_n\|_W^2\le \frac{1}{n}$. Here, $w_n\ge 1$ is satisfied q.e.\@ on $O_n$ as we work with the q.c.\@ representative of $w_n$. 
		Therefore, there exists a subsequence still denoted by $(w_n)$ such that $w_n\to 0$ 
		q.e.\@ in $\Omega$. By $\varphi_k^n\le 1_{A_n}$ and $\varphi_k^n\le 1_A$ on $\Omega\setminus O_n$, one obtains $(\varphi_k^n-w_n)_+ \le 1_A$ q.e.\@ in $\Omega$. Let 
		\[ v_k \coloneqq \underset{1\le n\le k}{\max}(\varphi_k^n-w_n)_+, \qquad \psi = \underset{k}{\sup }\, v_k.\]
		Then $v_k$ is an increasing sequence with $v_k\in W$, $v_k\ge 0$ for all $k\in \N$ and it holds $0\le \psi \le 1_A$ q.e.\@ in $\Omega$.
		Furthermore, for all $k\ge n$ it holds $v_k\ge \varphi_k^n-w_n$. With $A\subseteq A_n$, this yields $\psi\ge 1-w_n$ q.e.\@ in $A$. Passing to the limit $n\to \infty$ then implies $\psi\ge 1$ q.e.\@ in $A$, so $\psi=1_A$. \\
		If $d<2s$, then $A$ is already open and we can directly use an increasing sequence $(\varphi_n)\subset \Cinf$ with $\varphi_n \to 1_{A}$ pointwise q.e.\@ in $\Omega$.
	\end{proof}
	
	For $\mu \in \M$ and $f\in W^*$, the relaxed Dirichlet problem is defined as finding $w\in W\cap L_{\mu}^2(\Omega)$ such that 
	\begin{align}\label{eq:relDirProb}(w,v)_W + \int_{\Omega} wv \dd \mu = \braket{f,v}_W \qquad \forall v \in W\cap L_{\mu}^2(\Omega).\end{align}
	
	\begin{theorem}
		Let $f\in W^*$. Then there exists a unique solution $w$ of \eqref{eq:relDirProb}.
	\end{theorem}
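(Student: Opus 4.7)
The plan is to apply the Lax--Milgram theorem (or, equivalently, the Riesz representation theorem) on the space $V := W \cap L^2_\mu(\Omega)$, equipped with the inner product
\[
(u,v)_V := (u,v)_W + \io uv \dd\mu, \qquad \|u\|_V^2 = \|u\|_W^2 + \|u\|_{L^2_\mu(\Omega)}^2.
\]
Since the left-hand side of \eqref{eq:relDirProb} is exactly $(w,v)_V$, coercivity and continuity of the bilinear form on $V$ are automatic, and so the only substantial work is to check that $V$ is a Hilbert space and that $v \mapsto \braket{f,v}_W$ defines a bounded linear functional on $V$.

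The boundedness of the right-hand side is immediate: for $v \in V$,
\[
|\braket{f,v}_W| \le \|f\|_{W^*} \|v\|_W \le \|f\|_{W^*} \|v\|_V,
\]
so the problem fits the Riesz framework as soon as $(V,(\cdot,\cdot)_V)$ is shown to be Hilbert. The only nontrivial point in verifying this is that the integral $\io uv \dd\mu$ is well defined for $u,v\in V$, which requires interpreting $u,v$ pointwise: we use the quasi-continuous representatives (guaranteed by the discussion after Lemma \ref{lm:capDefsOpen}), which are determined up to sets of capacity zero, and by property (i) of Definition \ref{def:capMeasure} these are $\mu$-null. Bilinearity, symmetry, and positive definiteness of $(\cdot,\cdot)_V$ then follow from the same properties of the $W$ and $L^2_\mu$ inner products.

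The main obstacle is completeness of $V$, which requires reconciling the $W$- and $L^2_\mu$-limits of a Cauchy sequence. Given a Cauchy sequence $(u_n) \subset V$, it is Cauchy in both $W$ and $L^2_\mu(\Omega)$, hence converges to some $u \in W$ and some $\tilde u \in L^2_\mu(\Omega)$. Since $u_n \to u$ in $W$, the q.e.-convergence of a subsequence (cited from \cite[Lemma~3.8]{30fracRelCap}) together with $\mu \in \M$ vanishing on capacity-zero sets gives $\mu$-a.e.\ convergence of that subsequence to $u$. A further subsequence converges $\mu$-a.e.\ to $\tilde u$, so $u = \tilde u$ $\mu$-a.e., showing $u \in V$ and $u_n \to u$ in $V$.

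With $V$ Hilbert, existence and uniqueness of $w \in V$ satisfying \eqref{eq:relDirProb} follow immediately from the Riesz representation theorem applied to the functional $v \mapsto \braket{f,v}_W$ on $V$.
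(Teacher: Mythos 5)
Your proposal is correct and follows essentially the same route as the paper, which simply invokes the Lax--Milgram lemma as in \cite[Theorem 2.2]{29capCompactness} on the space $W\cap L_{\mu}^2(\Omega)$. You merely spell out the details delegated to that citation (well-definedness of the $\mu$-integral via quasi-continuous representatives and completeness of the intersection space), and these are handled correctly.
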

	\begin{proof}
		This follows from the Lax-Milgram lemma as in \cite[Theorem 2.2]{29capCompactness}.
	\end{proof}
	
	Next, we define the notion of $\gamma$-convergence for capacitary measures.
	
	\begin{definition}
		A sequence $(\mu_k) \subset \M$ is called $\gamma$-convergent to some capacitary measure $\bar\mu \in \M$, if for every $f\in W^*$ the solutions $w_k \in W\cap L_{\mu_k}^2(\Omega)$ of \begin{align}\label{eq:relDirProbWk} (w_k,v)_W + \int_{\Omega}w_k v \dd \mu_k = \braket{f,v}_W \qquad \forall v\in W\cap L_{\mu_k}^2(\Omega) \end{align} converge weakly in $W$ to the solution $\bar w \in W \cap L_{\bar\mu}^2(\Omega)$ of \begin{equation}\label{eq:relDirProbW}
			(\bar w,v)_W + \int_{\Omega}\bar w v \dd \bar\mu = \braket{f,v}_W \qquad \forall v\in W\cap L_{\bar\mu}^2(\Omega).
		\end{equation}
	\end{definition}
	We denote $\gamma$-convergence of $(\mu_k)$ to $\bar\mu$ by $\mu_k \gammaTo \bar \mu$.
	The name $\gamma$-convergence arises from its relation to $\Gamma$-convergence for functionals. Given a measure $\mu$, let $F_{\mu} \colon \Ltwo \to [0,\infty]$ be defined as
	\[F_{\mu}(w) \coloneqq \begin{cases}
		\frac12 \|w\|_W^2 + \frac12 \|w\|_{\Lmu}^2 & \text{if } w\in W \cap \Lmu, \\
		+\infty & \text{otherwise}.
	\end{cases}
	\]
	One can show that $F_{\mu_k}$ $\Gamma$-converges to $F_{\bar\mu}$ if and only if $\mu_k \gammaTo \bar \mu$ analogously to \cite[Proposition 4.10]{114Wiener} by replacing $H_0^1(\Omega)$ with $W$.\\
	Note that solutions $w_k$ of \eqref{eq:relDirProbWk} are bounded in $W$ by
	\begin{equation}\label{eq:wkBounded}
		\|w_k\|_W \le \|f\|_{W^*}
	\end{equation}
	due to
	\[\|w_k\|_W^2 \le \|w_k\|_W^2+\|w_k\|_{L_{\mu_k}^2(\Omega)}^2 = \braket{f,w_k}_W\le \|f\|_{W^*} \|w_k\|_W. \]

	\section{Compactness of capacitary measures}\label{sec:compact}
	Now, we extend the compactness result for capacitary measures with respect to $\gamma$-convergence for the case $s=1$ from \cite{29capCompactness} to the fractional case $s<1$. The proofs are quite similar to those presented in \cite{29capCompactness}, but for the sake of completeness we state them here adapted to the fractional setting.
	The underlying idea of the proof is the following: we consider a  weakly compact set $\K \subset W$ and show that functions in $\K$ can be associated with a capacitary measure $\mu$. The main argument to obtain compactness of capacitary measures is then the equivalence of $\gamma$-convergence of a sequence of measures and weak convergence of the sequence of the associated functions in $\K$. \\
	We start with some results that help to characterize the set $\K$ later on.

	\begin{lemma}\label{lm:wNonneg}
		Let $\mu\in\M, f\in W^*$ and let $w\in W\cap L_{\mu}^2(\Omega)$ be the solution of problem \eqref{eq:relDirProb}. Then $f\ge 0$ in $\Omega$ implies $w\ge 0$ q.e.\@ in $\Omega$. 
	\end{lemma}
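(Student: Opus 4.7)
The plan is to apply the standard Stampacchia-type argument: test the variational equation \eqref{eq:relDirProb} with $v = -w_-$ and show that $\|w_-\|_W = 0$, which implies $w_- = 0$ q.e.\ since we identify $w$ with its quasi-continuous representative. The nonnegativity of $f \in W^*$ is understood in the sense that $\braket{f,\varphi}_W \ge 0$ for every $\varphi \in W$ with $\varphi \ge 0$ q.e.

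First I would verify admissibility of the test function. Applying Lemma \ref{lm:inequPosPart} to $-w \in W$ yields $w_- = (-w)_+ \in W$, and the estimate $|w_-| \le |w|$ gives $w_- \in L_\mu^2(\Omega)$. Hence $-w_- \in W \cap L_\mu^2(\Omega)$ and may be substituted into \eqref{eq:relDirProb}, producing
\begin{equation*}
-(w,w_-)_W + \io (-w\, w_-)\dd\mu = -\braket{f,w_-}_W.
\end{equation*}

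Next I would estimate the two terms on the left-hand side from below. For the $W$-inner-product term, Lemma \ref{lm:inequPosPart} applied to $-w$ gives $(-w,(-w)_+)_W \ge \|(-w)_+\|_W^2$, that is, $-(w,w_-)_W \ge \|w_-\|_W^2$. For the measure term, the identity $w_+ w_- = 0$ yields $-w\, w_- = -(w_+ - w_-)w_- = w_-^2$, so $\io (-w\, w_-)\dd\mu = \|w_-\|_{L_\mu^2(\Omega)}^2$. Since $f \ge 0$ and $w_- \ge 0$ q.e., the right-hand side satisfies $-\braket{f,w_-}_W \le 0$.

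Combining these gives $\|w_-\|_W^2 + \|w_-\|_{L_\mu^2(\Omega)}^2 \le 0$, hence $w_- = 0$ in $W$, and therefore $w \ge 0$ q.e.\ in $\Omega$ upon passing to the quasi-continuous representative. The only mildly delicate point is to confirm that the manipulation $-w\, w_- = w_-^2$ is valid $\mu$-a.e., which is immediate once $w$ is identified with its q.c.\ representative, since $\mu$ vanishes on sets of capacity zero. There is no real obstacle here; the argument is essentially the same as for the case $s=1$, with Lemma \ref{lm:inequPosPart} taking the role of the classical truncation estimate in $H_0^1(\Omega)$.
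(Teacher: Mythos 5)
Your proposal is correct and follows essentially the same route as the paper: testing \eqref{eq:relDirProb} with (minus) the negative part of $w$ and invoking Lemma \ref{lm:inequPosPart} together with the sign of the measure term and of $\braket{f,\cdot}_W$. The only cosmetic difference is that the paper first shows $(w,-w_-)_W=0$ via a separate pointwise inequality before applying Lemma \ref{lm:inequPosPart}, whereas you combine the estimates directly, which is equally valid.
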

	\begin{proof} 
		This is an adaption of the proof of \cite[Proposition 2.4]{29capCompactness}.
		Let $v\coloneqq -w_-$, so $v\ge 0$ and $v\in W\cap L_{\mu}^2(\Omega)$. With $wv\le 0$ q.e.\@ and $\braket{f,v}_W\ge 0$, testing \eqref{eq:relDirProb} with $v$ yields $(w,v)_W\ge 0$.
		At the same time one can easily see that
		\[
		0\ge (w(x)-w(y))\cdot ( v(x)-v(y)).
		\]
		By definition of the inner product for $H^s(\Omega)$, this yields $(w,v)_W\le 0$, so $(w,v)_W=0$ using the observation above. Note that these considerations also cover the spectral fractional Sobolev space $\specHs$ due to its integral formulation.
		In particular, this yields $\|v\|_W=0$ by Lemma \ref{lm:inequPosPart}, which implies $v=0$ q.e.\@ on $\Omega$ as we work with the unique q.c. representative of functions in $W$. 
	\end{proof}
	
	\begin{lemma} \label{lm:monOfSolsRelDir}
		Let $f_1,f_2 \in W^*(\Omega)$, $\mu_1, \mu_2 \in \M$ and let $w_1,w_2$ be the solutions of the corresponding relaxed Dirichlet problems \eqref{eq:relDirProb}. Then $0\le f_1\le f_2$ and $\mu_2 \le \mu_1$ implies $0\le w_1 \le w_2$ q.e.\@ in $\Omega$.
	\end{lemma}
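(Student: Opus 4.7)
The plan is to first get nonnegativity from Lemma \ref{lm:wNonneg} applied to $f_1, f_2 \ge 0$, which yields $w_1, w_2 \ge 0$ quasi everywhere. The main work is then to show $w_1 \le w_2$, which I would do by testing both relaxed Dirichlet problems against the function $v \coloneqq (w_1 - w_2)_+$ and showing $\|v\|_W^2 = 0$.

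Before testing I need to check $v$ is admissible. Membership $v \in W$ follows from Lemma \ref{lm:inequPosPart} applied to $w_1 - w_2 \in W$. On the set $\{w_1 > w_2\}$ we have $0 \le v = w_1 - w_2 \le w_1$ (using $w_2 \ge 0$), so $v \in L^2_{\mu_1}(\Omega)$ because $w_1 \in L^2_{\mu_1}(\Omega)$. The hypothesis $\mu_2 \le \mu_1$ gives the inclusion $L^2_{\mu_1}(\Omega) \subseteq L^2_{\mu_2}(\Omega)$, so also $v \in L^2_{\mu_2}(\Omega)$.

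Testing \eqref{eq:relDirProb} for $(w_1,\mu_1,f_1)$ and $(w_2,\mu_2,f_2)$ with $v$ and subtracting gives
\begin{equation*}
(w_1 - w_2, v)_W \;+\; \int_\Omega w_1 v \,\dd\mu_1 \;-\; \int_\Omega w_2 v \,\dd\mu_2 \;=\; \langle f_1 - f_2, v\rangle_W \;\le\; 0,
\end{equation*}
where the sign comes from $f_1 \le f_2$ and $v \ge 0$. The first term is bounded below by $\|v\|_W^2 \ge 0$ via Lemma \ref{lm:inequPosPart}. The key algebraic manoeuvre for the measure terms is
\begin{equation*}
\int_\Omega w_1 v \,\dd\mu_1 - \int_\Omega w_2 v \,\dd\mu_2 \;=\; \int_\Omega (w_1-w_2)\, v \,\dd\mu_1 \;+\; \int_\Omega w_2 v \,\dd(\mu_1 - \mu_2),
\end{equation*}
in which the first integral equals $\int_\Omega v^2 \,\dd\mu_1 \ge 0$, and the second is nonnegative because $w_2, v \ge 0$ q.e.\ and $\mu_1 - \mu_2 \ge 0$. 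Combining the inequalities forces $\|v\|_W^2 \le 0$, so $v = 0$ q.e.\ by working with the q.c.\ representative, i.e.\ $w_1 \le w_2$ q.e.\ in $\Omega$.

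The main obstacle I anticipate is the measure bookkeeping above: a naive splitting risks evaluating $\mu_1 - \mu_2$ against terms of indefinite sign or testing with functions not in $L^2_{\mu_i}(\Omega)$. The add-and-subtract trick, together with using nonnegativity of $w_2$ to control the sign of the $\mu_1-\mu_2$ integrand, is what makes the argument go through cleanly, paralleling the classical $s=1$ proof in \cite{29capCompactness}.
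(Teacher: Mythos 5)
Your proof is correct and follows essentially the same route as the paper: nonnegativity of $w_1,w_2$ from Lemma \ref{lm:wNonneg}, testing both relaxed problems with $v=(w_1-w_2)_+$ (admissible since $0\le v\le w_1$ and $\mu_2\le\mu_1$), and forcing $\|v\|_W^2\le 0$ via Lemma \ref{lm:inequPosPart}. Your add-and-subtract step with $\dd(\mu_1-\mu_2)$ is merely the paper's inequality $\int_\Omega w_2 v\,\dd\mu_2\le\int_\Omega w_2 v\,\dd\mu_1$ in different notation (and is better stated in that form, since $\mu_1-\mu_2$ need not be a well-defined measure where both are infinite).
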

	\begin{proof}
		We follow the proof of \cite[Proposition 2.5]{29capCompactness}. Lemma \ref{lm:wNonneg} yields $w_1, w_2\ge 0$ q.e.\@ in $\Omega$. Define $v\coloneqq (w_1-w_2)_+$. As $0\le v\le w_1$ and $\mu_2\le\mu_1$, it holds $v\in L_{\mu_1}^2(\Omega)\cap L_{\mu_2}^2(\Omega)$. Testing the equations for $w_1$ and $w_2$ by $v$ and subtracting them gives together with $\int_{\Omega}w_2 v \dd \mu_2 \le \int_{\Omega} w_2 v \dd \mu_1$
		\[(w_1-w_2,v)_W+\int_{\Omega}(w_1-w_2) v \dd \mu_1 \le \braket{f_1-f_2,v}_W\le 0.\]
		As $(w_1-w_2)v\ge 0$, one obtains with Lemma \ref{lm:inequPosPart} that $\|v\|_W^2\le 0$, so $v=0$ q.e.\@ on $\Omega$.
	\end{proof}
	
	\begin{lemma}\label{lm:estMesaureF}
		Let $\eta\in W^*$ be a Radon measure on $\Omega$ and let $w\in W\cap L_{\mu}^2(\Omega)$ solve problem \eqref{eq:relDirProb} for $f=\eta$. Then it holds 
		\[ (w,v)_W \le \int_{\Omega} v \dd \eta \qquad \forall v\in W \text{ s.t. } v\ge 0 \text{ q.e.\@ in } \Omega.\]
	\end{lemma}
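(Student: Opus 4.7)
The strategy mirrors the classical argument for $s=1$ in \cite{29capCompactness}: test the relaxed Dirichlet equation \eqref{eq:relDirProb} with the truncation $v_n := v \wedge (nw)$, then pass to the limit $n \to \infty$.

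Since $\eta$ is a nonnegative Radon measure in $W^*$, Lemma \ref{lm:wNonneg} gives $w \ge 0$ q.e.\@ on $\Omega$. For $v \in W$ with $v \ge 0$ q.e., Lemma \ref{lm:maxMinInW} places $v_n$ in $W$, and the bounds $0 \le v_n \le nw$ together with $w \in \Lmu$ yield $v_n \in W \cap \Lmu$. Testing \eqref{eq:relDirProb} with $v_n$ gives
\[(w, v_n)_W + \io w v_n \, \dd\mu = \io v_n \, \dd\eta.\]
The $\mu$-integral is nonnegative, and $v_n \le v$ together with $\eta \ge 0$ yields
\[(w, v_n)_W \le \io v \, \dd\eta \qquad \text{for every } n \in \N.\]

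Pointwise q.e.\@ one has $v_n \nearrow v^+ := v \cdot I_{\{w > 0\}}$, since $nw \to \infty$ on $\{w > 0\}$ while $v_n = v \wedge 0 = 0$ on $\{w = 0\}$. Decomposing $v = v^+ + v^0$ with $v^0 := v \cdot I_{\{w = 0\}}$, both $w$ and $v^0$ are nonnegative with q.e.\@ disjoint supports. Hence the local contribution to $(w, v^0)_W$ vanishes, and a case analysis over $x, y \in \{w > 0\}$ or $\{w = 0\}$ shows that the Gagliardo integrand $(w(x) - w(y))(v^0(x) - v^0(y))$ (respectively its spectral analogue via \eqref{eq:spectralInt}) is pointwise nonpositive. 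This yields $(w, v^0)_W \le 0$ and consequently $(w, v)_W \le (w, v^+)_W$.

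It remains to show $(w, v_n)_W \to (w, v^+)_W$; combined with the uniform bound above this gives $(w, v)_W \le (w, v^+)_W \le \io v \, \dd\eta$. The local part converges by monotone convergence since $0 \le w v_n \nearrow w v^+$. For the nonlocal part, I would split $\Omega \times \Omega$ into four pieces according to whether $x, y$ lie in $\{w > 0\}$ or $\{w = 0\}$: the piece $\{w = 0\}^2$ contributes zero; on the two cross pieces the integrand is monotone in $n$ and monotone convergence applies directly; on $\{w > 0\}^2$ one further separates the "good" set $\{v(x) < nw(x),\, v(y) < nw(y)\}$, where $v_n = v$ and the limit is attained, from the shrinking "bad" set where $v \ge nw$ at some endpoint.

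\textbf{The main obstacle} is the contribution from this last "bad" set. The naive Lipschitz estimate $|v_n(x) - v_n(y)| \le |v(x) - v(y)| + n |w(x) - w(y)|$ blows up with $n$, so standard dominated convergence is unavailable. The resolution is to exploit that on $\{v \ge nw\}$ one has $nw \le v$, which allows the factor $n$ to be absorbed into $v$; together with the vanishing measure of $\{v \ge nw\}$ and the Gagliardo integrability of $v$ and $w$, this yields an $n$-independent dominating function or a direct estimate of the bad-set contribution tending to zero, completing the passage to the limit.
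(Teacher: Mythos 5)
Your truncation $v_n=\min(v,nw)$ is exactly $n\min(\tfrac1n v,w)$, so up to rescaling you are testing with the same functions as the paper, and the first half of your argument (nonnegativity of the $\mu$-term, $v_n\le v$, the four-way case analysis according to whether $x,y$ lie in $\{v<nw\}$ or $\{v\ge nw\}$) coincides with the paper's. The genuine gap is the limit passage that you yourself flag as the main obstacle: neither of the two repairs you suggest works as stated. Write $g_n(x,y)=(w(x)-w(y))(v_n(x)-v_n(y))\,|x-y|^{-d-2s}$ and let $g_\infty$ be the same expression with $v^+$ in place of $v_n$. On the doubly bad set $\{v\ge nw\}\times\{v\ge nw\}$ one has $g_n=n(w(x)-w(y))^2|x-y|^{-d-2s}$; absorbing $n$ via $nw\le v$ only bounds this by $\max(v(x),v(y))\,|w(x)-w(y)|\,|x-y|^{-d-2s}$, which lacks the cancellation $|v(x)-v(y)|$ near the diagonal and is not integrable in general, so there is no $n$-independent dominating function for $|g_n|$; and this nonnegative contribution, being of the form $n$ times a quantity that tends to $0$, has no reason to vanish, so the convergence $(w,v_n)_W\to(w,v^+)_W$ you set out to prove is not established and need not hold. (A minor further point: $v^+$ and $v^0=vI_{\{w=0\}}$ need not belong to $W$, so $(w,v^0)_W\le 0$ should be read as a pointwise sign statement about the Gagliardo integrand, which is what you actually use.)

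The step can be closed with what you already have, but only one-sidedly. All you need is $\liminf_n\,(w,v_n)_W\ \ge\ \io wv^+\dd x+\tfrac{c_{d,s}}2\iint g_\infty$, and your own case analysis gives the uniform lower bound $g_n\ge -|w(x)-w(y)|\,|v(x)-v(y)|\,|x-y|^{-d-2s}$: the doubly bad set gives $g_n\ge 0$, the cross sets give $g_n\ge\min\{0,(w(x)-w(y))(v(x)-v(y))|x-y|^{-d-2s}\}$ after absorbing $n$ (distinguish $w(x)\ge w(y)$ and $w(x)<w(y)$), and on the good set $g_n$ equals the $v$-integrand. The minorant is integrable by Cauchy--Schwarz since $v,w\in W$, so Fatou's lemma applies; combined with the uniform bound $(w,v_n)_W\le\io v\dd\eta$ and your sign argument giving $(w,v)_W\le \io wv^+\dd x+\tfrac{c_{d,s}}2\iint g_\infty$, this yields the claim. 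The paper reaches the same end by a cleaner rearrangement: testing with $\min(\tfrac1n v,w)$, it bounds the right-hand side by $\tfrac1n\io v\dd\eta$ and the bilinear form from below by $\tfrac1n$ times integrals of the fixed, $n$-independent integrand $(w(x)-w(y))(v(x)-v(y))|x-y|^{-d-2s}$ (resp.\ $wv$) over sets $M_n$ (resp.\ $\{w>v/n\}$) that alone vary with $n$; after cancelling $\tfrac1n$, dominated convergence with the same Cauchy--Schwarz majorant gives exactly $(w,v)_W$ in the limit, avoiding $v^+$ altogether. As written, your key step is asserted rather than proved.
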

	\begin{proof}
		We generalize \cite[Proposition 2.6]{29capCompactness} from $s=1$ to $s\in(0,1)$. Let $v\in W$ with $v\ge 0$ and define $v_n\coloneqq \min(\frac{1}{n}v,w)$. Lemma \ref{lm:wNonneg} yields $w\ge 0$, and therefore $v_n\ge 0$ and it also holds $v_n\in W\cap L_{\mu}^2(\Omega)$ by Lemma \ref{lm:maxMinInW}. Note that we do not require $v\in \Lmu$.\\
		Now, we test equation \eqref{eq:relDirProb} with $v_n$. With $\int_{\Omega} w v_n \dd \mu \ge 0$, this yields 
		\begin{align}\label{eq:auxIneq}
			(w,v_n)_W\le \int_{\Omega} v_n \dd \eta \le \frac{1}{n} \int_{\Omega} v \dd \eta.	
		\end{align}
		Next, we want to pass to the limit for $n\to\infty$ in the previous inequality.
		To do that, we first note that \begin{equation}\label{eq:l2PartEst}
			\int_{\Omega} w v_n \dd x =  \int_{\Omega\cap \{w>\frac{v}{n}\}} w v_n \dd x +  \int_{\Omega\cap \{w \le \frac{v}{n}\}} w v_n \dd x \ge \frac{1}{n} \int_{\Omega\cap \{w>\frac{v}{n}\}} w v \dd x.
		\end{equation} Then we split the double integral 
		\[ \io\io \frac{(w(x)-w(y)) (\min(\frac{1}{n}v,w)(x)-\min(\frac{1}{n}v,w)(y))}{|x-y|^{d+2s}}\dd y \dd x \qquad (*)\]
		from the $W$-inner product in integral form in four parts:
		\begin{enumerate}[label=(\roman*)]
			\item $x,y\in \{\frac{v}{n} \ge w\}$: Then the nominator is $(w(x)-w(y))^2\ge 0$.
			\item $x,y\in \{\frac{v}{n}< w\}$: The nominator reads $(w(x)-w(y))(\frac{1}{n}v(x)-\frac{1}{n}v(y))$.
			\item $x\in \{\frac{v}{n} < w\}, y\in \{\frac{v}{n} \ge w\}$: 
			\begin{itemize}[label=-, leftmargin=0pt]
				\item 
				For $w(x)\ge w(y)$, it holds \\$(w(x)-w(y))(\frac{1}{n}v(x)-w(y)) \ge (w(x)-w(y))(\frac{1}{n}v(x)-\frac{1}{n}v(y))$.
				\item 
				For $w(x)\le w(y)$, it holds $(w(x)-w(y))(\frac{1}{n}v(x)-w(y)) \ge (w(x)-w(y))^2\ge 0$. 
			\end{itemize}
			\item $x\in \{\frac{v}{n} \ge w\}, y\in \{\frac{v}{n} < w\}$: 
			\begin{itemize}[leftmargin=0pt, label=-]
				\item For $w(x)\ge w(y)$, it holds $(w(x)-w(y))(w(x)-\frac{1}{n}v(y))\ge (w(x)-w(y))^2\ge 0$.
				\item For $w(x)\le w(y)$, it holds\\ $(w(x)-w(y))(w(x)-\frac{1}{n}v(y))\ge (w(x)-w(y))(\frac{1}{n}v(x)-\frac{1}{n}v(y))$.
			\end{itemize}
		\end{enumerate}
		Then one obtains 
		\[(*) \ge \iint_{M_n} \frac{(w(x)-w(y))(\frac{1}{n}v(x)-\frac{1}{n}v(y))}{|x-y|^{d+2d}} = \frac{1}{n} \iint_{M_n} \frac{(w(x)-w(y))(v(x)-v(y))}{|x-y|^{d+2d}} \]
		with 
		\begin{align*} M_n = \left\{ (x, y) \in \left\{\frac{v}{n}< w\right\}\right\} \cup \left\{ (x, y):  \frac{v(x)}{n}< w(x), \, \frac{v(y)}{n} \ge w(y), \, w(x)\ge w(y) \right\} \\ \cup \left\{(x, y): \frac{v(x)}{n} \ge w(x), \, \frac{v(y)}{n} <w(y), \, w(x) \le w(y) \right\} \subseteq \Omega\times\Omega. \end{align*}
		Using \eqref{eq:auxIneq} and \eqref{eq:l2PartEst},  this shows
		\[
		\io v \dd\eta \ge \int_{\Omega\cap \{w>\frac{v}{n}\}} w v \dd x + \iint_{M_n} \frac{(w(x)-w(y))(v(x)-v(y))}{|x-y|^{d+2d}}.
		\]
		Define $M\subseteq \Omega\times\Omega$ as
		\[M\coloneqq \{(x,y) \in \{w>0\}\}\cup \{(x,y): w(x)>0, w(y)=0\}\cup \{(x,y): w(x)=0,w(y)>0\}.
		\]
		By dominated convergence, it is possible to pass to the limit in the previous inequality to obtain 
		\begin{align*}\int_{\Omega} v \dd \eta \ge \int_{\Omega\cap \{w>\frac{v}{n}\}} w v \dd x + \iint_{M_n} \frac{(w(x)-w(y))(v(x)-v(y))}{|x-y|^{d+2d}} \\ \to \int_{\Omega\cap \{w>0\}} w v \dd x + \iint_{M} \frac{(w(x)-w(y))(v(x)-v(y))}{|x-y|^{d+2d}} = (w,v)_W. \end{align*}
		
	\end{proof}

	\begin{lemma}\label{lm:eqNDiff}
		Let $\bar\mu\in\M$, $\bar w\in W\cap L_{\bar \mu}^2(\Omega)$ and for $k\in\N$ let $w_k\in  W\cap L_{\bar \mu}^2(\Omega)$ be the solution of 
		\begin{equation}\label{eq:eqNDiff}
			(w_k,v)_W+\int_{\Omega}w_k v \dd \bar \mu + k \io (w_k-\bar w)v  \dd x = 0\qquad \forall v\in  W\cap L_{\mu}^2(\Omega).
		\end{equation}
		Then $w_k \to \bar w$ in $W$ and in $L_{\mu}^2(\Omega)$. 
	\end{lemma}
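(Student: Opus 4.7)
First I would confirm existence and uniqueness of $w_k$ via the Lax--Milgram lemma on the Hilbert space $W \cap L_{\bar\mu}^2(\Omega)$ (with the natural sum-of-inner-products norm): the bilinear form $a_k(u,v) \coloneqq (u,v)_W + \int_\Omega uv\,\dd\bar\mu + k\int_\Omega uv\,\dd x$ is continuous and coercive, and the right-hand side $v \mapsto k\int_\Omega \bar w\, v\,\dd x$ defines a continuous linear functional thanks to $\bar w \in L^2(\Omega)$ together with $W \hookrightarrow L^2(\Omega)$ from Proposition \ref{prop:propFrac}.

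The decisive step is to test \eqref{eq:eqNDiff} with $v = w_k - \bar w$, which lies in $W \cap L_{\bar\mu}^2(\Omega)$ because $\bar w$ does. Splitting $w_k = (w_k - \bar w) + \bar w$ in the first two bilinear terms rearranges the equation into the identity
\[ \|w_k-\bar w\|_W^2 + \|w_k-\bar w\|_{L_{\bar\mu}^2(\Omega)}^2 + k\|w_k-\bar w\|_{L^2(\Omega)}^2 = -(\bar w, w_k-\bar w)_W - \int_\Omega \bar w\,(w_k-\bar w)\,\dd\bar\mu. \]
Applying Young's inequality to the right-hand side and absorbing half of each of the $W$- and $L_{\bar\mu}^2(\Omega)$-norms into the left-hand side yields the uniform a priori bound
\[ \tfrac12\|w_k-\bar w\|_W^2 + \tfrac12\|w_k-\bar w\|_{L_{\bar\mu}^2(\Omega)}^2 + k\|w_k-\bar w\|_{L^2(\Omega)}^2 \le \tfrac12\|\bar w\|_W^2 + \tfrac12\|\bar w\|_{L_{\bar\mu}^2(\Omega)}^2. \]

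The $k$-factor in the third term immediately delivers $w_k \to \bar w$ in $L^2(\Omega)$ at rate $O(k^{-1/2})$, while the first two terms show that $(w_k - \bar w)$ is bounded in both $W$ and $L_{\bar\mu}^2(\Omega)$. Extracting weakly convergent subsequences in each space and using the $L^2(\Omega)$-convergence to identify the weak limit as $\bar w$, uniqueness of the weak limit then upgrades this to full-sequence weak convergence $w_k \rightharpoonup \bar w$ in $W$ and in $L_{\bar\mu}^2(\Omega)$.

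Plugging this weak convergence back into the right-hand side of the identity above, both terms are of the form ``fixed vector tested against $w_k - \bar w$'' in the respective inner products, so each tends to $0$; consequently the left-hand side vanishes in the limit, yielding the desired strong convergences $w_k \to \bar w$ in $W$ and in $L_{\bar\mu}^2(\Omega)$ simultaneously. I do not expect a real obstacle: the only mildly delicate point would be coupling the $W$- and $L_{\bar\mu}^2(\Omega)$-norms, but the single test function $v = w_k - \bar w$ produces one identity that controls both at once and bypasses any separate lower-semicontinuity argument.
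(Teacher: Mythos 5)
Your proposal is correct and follows essentially the same route as the paper's proof: testing with $v=w_k-\bar w$, rearranging into the identity whose right-hand side is $-(\bar w,w_k-\bar w)_W-\int_\Omega \bar w(w_k-\bar w)\,\dd\bar\mu$, applying Young's inequality to get the uniform bound with the factor $k$ on the $L^2(\Omega)$-term, deducing $L^2$-convergence plus weak convergence in $W$ and $L_{\bar\mu}^2(\Omega)$, and then feeding the weak convergence back into the identity to upgrade to strong convergence. The only cosmetic difference is that you make the final "plug back into the identity" step explicit, which the paper compresses into one sentence.
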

	\begin{proof}
		Here, we follow \cite[Proposition 3.1]{29capCompactness}.
		Testing \eqref{eq:eqNDiff} with $v= w_k-\bar w$ yields 
		\[ (w_k, w_k-\bar w)_W+\int_{\Omega}w_k  (w_k-\bar w) \dd \bar \mu + k \io (w_k-\bar w)^2  \dd x = 0. \]
		After adding some terms on both sides we obtain
		\begin{equation}\label{eq:auxWkToW} \|w_k-\bar w\|_W^2 + \int_{\Omega}  (w_k-\bar w)^2 \dd \mu + k \io (w_k-\bar w)^2  \dd x = -(\bar w,w_k-\bar w)_W - \int_{\Omega}\bar w(w_k-\bar w)\dd \bar \mu,\end{equation} which by Cauchy-Schwarz implies
		\[ \|w_k-\bar w\|_W^2 + \|  w_k-\bar w\|_{L_{\mu}^2(\Omega)}^2 + k \|w_k-\bar w\|_{L^2(\Omega)}^2 \le \|\bar w\|_W \|w_k-\bar w\|_W + \|\bar w\|_{L_{\bar \mu}^2(\Omega)} \|w_k-\bar w\|_{L_{\bar \mu}^2(\Omega)} .\]
		This yields 
		\[ \frac1{2} \|w_k-\bar w\|_W^2 + \frac{1}{2} \|  w_k-\bar w\|_{L_{\bar \mu}^2(\Omega)}^2 + k \|w_k-\bar w\|_{L^2(\Omega)}^2 \le  \frac1{2} \|\bar w\|_W^2 + \frac{1}{2} \|  \bar w\|_{L_{\bar \mu}^2(\Omega)}^2.\]
		Thus, we obtain $w_k \to \bar w$ in $\Ltwo$ from the last term on the left-hand side and also $w_k\rightharpoonup \bar w$ in $W$ and $L_{\bar \mu}^2(\Omega)$ by uniqueness of weak limits and the first two terms. Equation \eqref{eq:auxWkToW} then implies strong convergence in $W$ and $L_{\bar \mu}^2(\Omega)$.
	\end{proof}

	\begin{lemma}\label{lm:muInf}
		Let $\mu \in\M$ and $z\in W\cap L_{\mu}^2(\Omega)$ be the solution of 
		\[ (z,v)_W + \int_{\Omega} zv \dd \mu = \int_{\Omega} v \dd x \qquad \forall v \in W\cap L_{\mu}^2(\Omega).\]
		Then $\mu(B)=\infty$ for all Borel sets $B\subseteq \Omega$ with $\caps(B\cap\{z=0\})>0$.
	\end{lemma}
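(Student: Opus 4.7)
I argue by contradiction: suppose there is a Borel set $B\subseteq\Omega$ with $\mu(B)<\infty$ and $\caps(B\cap\{z=0\})>0$, and aim to derive a contradiction by testing the defining equation of $z$ with functions concentrated near $B\cap\{z=0\}$.

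First I reduce to a convenient quasi-open setting. By Lemma \ref{lm:wNonneg}, $z\ge 0$ q.e., and since $z$ is quasi-continuous the sublevel set $\{z<\delta\}$ is quasi-open for every $\delta>0$. Using the outer regularity of $\mu\in\M$ from Definition \ref{def:capMeasure}~(ii), I fix a quasi-open set $A\supseteq B\cap\{z=0\}$ with $\mu(A)<\infty$. Intersecting $A$ with $\{z<\delta\}$ yields a set that is still quasi-open (intersection of two quasi-open sets is quasi-open), still contains $B\cap\{z=0\}$, has $\mu$-measure at most $\mu(A)$, and on which $z\le\delta$ q.e. For brevity I still denote this refined set by $A$, noting $\caps(A)\ge\caps(B\cap\{z=0\})>0$.

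Next I test the equation. Lemma \ref{lm:approxSetPw} provides an increasing sequence $(v_k)\subset W$ with $v_k\ge 0$, $v_k\le 1_A$ q.e., and $v_k\to 1_A$ pointwise q.e.\@ in $\Omega$; since $\mu(A)<\infty$, each $v_k\in\Lmu$, so $v_k$ is admissible in the equation for $z$, giving
\begin{equation*}
(z,v_k)_W+\io zv_k\,d\mu=\io v_k\,dx.
\end{equation*}
Using $z\le\delta$ q.e.\@ on the support of $v_k$, the middle term is at most $\delta\mu(A)$. Lemma \ref{lm:estMesaureF}, applied to the Lebesgue measure on $\Omega$ (a Radon measure in $W^*$ via $W\hookrightarrow\Ltwo$), yields $(z,v_k)_W\le\io v_k\,dx$, so
\begin{equation*}
0\le\io v_k\,dx-(z,v_k)_W=\io zv_k\,d\mu\le\delta\mu(A).
\end{equation*}

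The main obstacle is the final step: to contradict this $\delta$-smallness I need a positive lower bound on $\io v_k\,dx-(z,v_k)_W$ that is independent of $\delta$ and depends only on $\caps(B\cap\{z=0\})$. This requires a careful analysis of the nonlocal cross-term
\begin{equation*}
\iint_{A\times A^c}\frac{(z(x)-z(y))(v_k(x)-v_k(y))}{|x-y|^{d+2s}}\,dy\,dx
\end{equation*}
(and its spectral analogue involving $J(x,y)$) in $(z,v_k)_W$, which encodes the capacitary interaction between $A$ and its complement and is the only place where the positive capacity of $A$ enters quantitatively. The argument adapts the strategy of \cite[Proposition 3.3]{29capCompactness} for the local case $s=1$ to the fractional setting, the principal new technical difficulty being the handling of the nonlocal kernel.
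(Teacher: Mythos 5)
The set-up (reduction to a quasi-open $A\supseteq B\cap\{z=0\}$ with $\mu(A)<\infty$ and $z\le\delta$ on $A$, testing with the approximating sequence from Lemma \ref{lm:approxSetPw}, and the identity $0\le\io v_k\dd x-(z,v_k)_W=\io z v_k\dd\mu\le\delta\mu(A)$) is correct, but the proof is not complete: the step you yourself single out as ``the main obstacle'' --- a $\delta$-independent positive lower bound on $\io v_k\dd x-(z,v_k)_W$ depending only on $\caps(B\cap\{z=0\})$ --- is exactly the missing contradiction, and it is not merely unproved, it cannot hold as an unconditional estimate. Indeed $\io v_k\dd x-(z,v_k)_W=\braket{\eta,v_k}_W\le\eta(A)$ with $\eta\coloneqq 1-(z,\cdot)_W\ge 0$, and for legitimate pairs $(\mu,z)$ the mass $\eta$ gives to a quasi-open set $A$ with $\caps(A\cap\{z=0\})$ bounded away from zero can be arbitrarily small: in the classical analogue ($s=1$, $\mu$ the measure forcing $z=0$ on a closed ball $K$) one has $\eta=1+\Delta z$, which on $\operatorname{int}K$ is just Lebesgue measure, so thin quasi-open neighbourhoods of a positive-capacity, Lebesgue-null subset of $\{z=0\}$ carry arbitrarily little $\eta$-mass while their capacity stays bounded below. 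So the deferred ``analysis of the nonlocal cross-term'' is not a technical remainder but the entire proof, and the quantitative route you sketch does not close. (Also, the cited result should be \cite[Lemma 3.2]{29capCompactness}, not Proposition 3.3.)

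The paper's argument is soft and avoids any capacity estimate. First, for any $w\in W\cap\Lmu$ with $0\le w\le 1$, the penalized solutions $w_n$ of $(w_n,v)_W+\io w_nv\dd\mu+n\io w_nv\dd x=n\io wv\dd x$ satisfy $0\le\frac1n w_n\le z$ q.e.\@ by the comparison principle (Lemma \ref{lm:monOfSolsRelDir}), hence $w_n=0$ q.e.\@ on $\{z=0\}$, and $w_n\to w$ (Lemma \ref{lm:eqNDiff}) forces $w=0$ q.e.\@ on $\{z=0\}$. Applying this to the increasing sequence approximating $1_A$ from Lemma \ref{lm:approxSetPw}, for any quasi-open $A$ with $\mu(A)<\infty$ (so that the approximants lie in $\Lmu$), one gets $\caps(A\cap\{z=0\})=0$; equivalently, every quasi-open $A$ with $\caps(A\cap\{z=0\})>0$ has $\mu(A)=\infty$, and property (ii) of Definition \ref{def:capMeasure} then gives $\mu(B)=\infty$ for every Borel $B$ with $\caps(B\cap\{z=0\})>0$. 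If you want to salvage your attempt, you should replace the hoped-for quantitative bound by this qualitative mechanism.
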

	\begin{proof}
		This proof works as the one of \cite[Lemma 3.2]{29capCompactness}.
		Let $w\in W\cap L_{\mu}^2(\Omega)$ with $0\le w\le 1$ q.e.\@ in $\Omega$ and let $w_n\in  W\cap L_{\mu}^2(\Omega)$ denote the solution of
		\[(w_n,v)_W + \int_{\Omega} w_n v \dd\mu + n\int_{\Omega} w_n v \dd x=n\io wv\dd x \] for all $n\in\N$.
		The comparison principle from Lemma \ref{lm:monOfSolsRelDir}  applied with $f_1=w, f_2=1, \mu_1=\mu+n\dd x$ and $\mu_2=\mu$ yields $0\le\frac{1}{n}w_n\le z$ q.e.\@ on $\Omega$. Thus, $w_n=0$ q.e.\@ in $\{z=0\}$ and therefore also $w=0$ q.e.\@ in $\{z=0\}$ by Lemma \ref{lm:eqNDiff}. \\
		Let $A \subseteq\Omega$ be q.o.\@ with $\mu(A)<\infty$. By Lemma \ref{lm:approxSetPw} there is an increasing sequence $(z_n)\subset W$ converging pointwise to $1_A$ q.e.\@ with $0\le z_n\le 1_A$ q.e.\@ in $\Omega$. Since $\mu(A)<\infty$, $z_n\in\Lmu$ for all $n\in\N$ and therefore $z_n=0$ q.e.\@ on $\{z=0\}$ as shown in the first paragraph. 
		Pointwise convergence $z_n\to 1$ q.e.\@ on $A$ thus shows $\caps(A \cap\{z=0\})=0$. \\
		Let $B$ be a Borel set with $\caps(B\cap \{z=0\})>0$. Then also $\caps(A\cap \{z=0\})>0$ for every q.o.\@ set $A\supseteq B$ and thus
		\[ \mu(B) = \inf\{\mu(A): A \text{ quasi open}, B\subseteq A\}=\infty
		\]
		by definition of $\M$ and the previous step.
	\end{proof}
	
	\begin{lemma}\label{lm:lambdaIsMu}
		Let $\lambda,\mu\in\M$ and let $w\in W\cap L_{\lambda}^2(\Omega)\cap \Lmu$ such that
		\begin{align} \label{eq:lambdaIsMu1}
			(w,v)_W+\io wv \dd \lambda=\io v \dd x\qquad \forall v\in W\cap L_{\lambda}^2(\Omega) \\ \label{eq:lambdaIsMu2}
			(w,v)_W+\io wv \dd \mu=\io v \dd x\qquad \forall v\in W\cap \Lmu.
		\end{align} Then $\lambda=\mu$.
	\end{lemma}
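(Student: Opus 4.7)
I would begin by subtracting equations \eqref{eq:lambdaIsMu1} and \eqref{eq:lambdaIsMu2} on the intersection $W\cap L_\lambda^2(\Omega)\cap L_\mu^2(\Omega)$ to obtain
\[
\io wv\dd\lambda \;=\; \io wv\dd\mu \qquad \forall\, v\in W\cap L_\lambda^2(\Omega)\cap L_\mu^2(\Omega).
\]
By Lemma \ref{lm:wNonneg}, $w\ge 0$ q.e. Applying Lemma \ref{lm:muInf} separately to $\lambda$ and $\mu$ (both equations carry the same right-hand side and solution $w$), every Borel set $B\subseteq\Omega$ with $\caps(B\cap\{w=0\})>0$ satisfies $\lambda(B)=\mu(B)=\infty$; for Borel sets $B$ with $\caps(B\cap\{w=0\})=0$, property (i) of Definition \ref{def:capMeasure} forces $\lambda(B\cap\{w=0\})=\mu(B\cap\{w=0\})=0$. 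Hence it suffices to prove $\lambda=\mu$ on Borel subsets of $\{w>0\}$.

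For the core step, fix $\phi\in\Cinf$ with $\phi\ge 0$ and a cut-off $\psi\in\Cinf$ with $\psi\equiv 1$ on $\operatorname{supp}\phi$ and $0\le\psi\le 1$. For $k\in\N$ define
\[ v_k \coloneqq \phi\cdot\min(kw,\psi). \]
Lemma \ref{lm:maxMinInW} gives $\min(kw,\psi)\in W$; since this function is bounded, Lemma \ref{lm:prodOfWFuns} yields $v_k\in W$. The pointwise bound $v_k\le k\|\phi\|_\infty w$ ensures $v_k\in L_\lambda^2(\Omega)\cap L_\mu^2(\Omega)$ and $wv_k\le k\|\phi\|_\infty w^2\in L^1(\lambda)\cap L^1(\mu)$. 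As $\psi\equiv 1$ on $\operatorname{supp}\phi$, inserting $v_k$ into the subtracted identity gives
\[ \io\phi\dd\nu_k^\lambda \;=\; \io\phi\dd\nu_k^\mu, \]
where the finite Borel measures $\nu_k^\lambda, \nu_k^\mu$ are defined via $\dd\nu_k^\lambda = w\min(kw,1)\dd\lambda$ and $\dd\nu_k^\mu = w\min(kw,1)\dd\mu$ (total mass bounded by $k\|w\|_{L_\lambda^2(\Omega)}^2$ and $k\|w\|_{L_\mu^2(\Omega)}^2$, respectively). Since $\phi\in\Cinf$ with $\phi\ge 0$ is arbitrary, the Riesz representation theorem identifies $\nu_k^\lambda=\nu_k^\mu$ as Radon measures on $\Omega$.

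Restricting to any Borel set $B\subseteq\{w>1/k\}$, the factor $\min(kw,1)$ equals $1$, so $\nu_k^\lambda(B)=\nu_k^\mu(B)$ collapses to $\int_B w\dd\lambda=\int_B w\dd\mu$; since $w>1/k>0$ on $B$ and $(w\lambda)|_B=(w\mu)|_B$ as finite Borel measures, integrating the bounded density $1/w$ against both yields $\lambda(B)=\mu(B)$. Exhausting $\{w>0\}=\bigcup_{k\in\N}\{w>1/k\}$ and combining with the first paragraph delivers $\lambda=\mu$ on all Borel subsets of $\Omega$.

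The main obstacle is the construction of admissible test functions: because $L_\lambda^2(\Omega)$ and $L_\mu^2(\Omega)$ can differ substantially and both measures may charge $\{w=0\}$ with infinite mass, one needs a truncation that keeps $v_k$ simultaneously in $W$, in $L_\lambda^2(\Omega)\cap L_\mu^2(\Omega)$, and that approximates $1_{\{w>0\}}$ monotonically. The form $\phi\min(kw,\psi)$ is tailored so that multiplication by $w$ produces integrands dominated by $kw^2\in L^1(\lambda)\cap L^1(\mu)$; this is precisely what reduces the infinite-measure problem to a clean comparison of finite Radon measures.
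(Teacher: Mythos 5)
Your proof is correct, and its overall architecture coincides with the paper's: both arguments reduce the claim to Borel subsets of $\{w>0\}$ by disposing of $\{w=0\}$ through Lemma \ref{lm:muInf} together with property (i) of Definition \ref{def:capMeasure}, and both recover $\lambda=\mu$ on $\{w>0\}$ by first identifying the measures with density $w$ and then integrating the bounded density $1/w$ over $\{w>1/k\}$ (in the paper, over $\{w>\epsilon\}$). Where you genuinely diverge is the identification step. The paper tests \eqref{eq:lambdaIsMu1}--\eqref{eq:lambdaIsMu2} with the increasing $W$-approximations of the indicator of the quasi-open set $O\cap\{w>\epsilon\}$ supplied by Lemma \ref{lm:approxSetPw}, and so compares the bounded measures $1_{\{w>\epsilon\}}\,w\dd\lambda$ and $1_{\{w>\epsilon\}}\,w\dd\mu$ directly on open sets $O$; you instead manufacture admissible test functions $v_k=\phi\min(kw,\psi)$ out of $w$ itself via Lemmas \ref{lm:maxMinInW} and \ref{lm:prodOfWFuns} (your cut-off $\psi$ correctly compensates for the fact that the constant $1$ need not belong to $W$, and your appeal to Lemma \ref{lm:wNonneg} for $w\ge 0$ q.e.\@ is what makes $\min(kw,\psi)$ eligible for Lemma \ref{lm:maxMinInW}), and you conclude $w\min(kw,1)\dd\lambda=w\min(kw,1)\dd\mu$ by uniqueness of finite Borel (hence Radon) measures tested against $\Cinf$. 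This buys independence from Lemma \ref{lm:approxSetPw} and from quasi-open machinery beyond the measurability of the level sets $\{w>1/k\}$ of the q.c.\@ representative, which you use at the same level of informality as the paper does for $\{w>\epsilon\}$; the paper's route, in turn, reuses a lemma needed elsewhere (e.g.\@ in Proposition \ref{prop:charK}) and avoids the Riesz-type uniqueness argument. Two cosmetic points: the restriction $\phi\ge 0$ is unnecessary, and to invoke uniqueness you should in any case pass from non-negative $\phi\in\Cinf$ to all of $C_c(\Omega)$ (write an arbitrary $\phi\in\Cinf$ as a difference of non-negative ones and mollify), a routine step worth one sentence.
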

	\begin{proof}
		We repeat the proof of \cite[Lemma 3.3]{29capCompactness} in the fractional setting. Let the measures $\lambda_0, \mu_0$ be defined for every Borel set $B$ by 
		\[ \lambda_0(B) = \int_B w \dd \lambda, \qquad \mu_0(B) = \int_B w \dd\mu,  \]
		and define 
		\[ \lambda_{\epsilon}(B) = \int_{B\cap \{w>\epsilon\}} w \dd \lambda, \qquad \mu_{\epsilon}(B) = \int_{B\cap \{w>\epsilon\}} w \dd\mu.  \]
		We want to show $\lambda_0=\mu_0$, which follows from $\lambda_{\epsilon}=\mu_{\epsilon}$ for all $\epsilon>0$. Let $\epsilon>0$. Since $w\in L_{\lambda}^2(\Omega)\cap \Lmu$, $\lambda_{\epsilon}$ and $\mu_{\epsilon}$ are bounded measures.
		Hence, it is enough to show $\lambda_{\epsilon}(O)=\mu_{\epsilon}(O)$ for all open sets $O\subseteq \Omega$. 
		For such an $O$ we define the q.o.\@ set $A_{\epsilon}\coloneqq O\cap\{w>\epsilon\}$. By Lemma \ref{lm:approxSetPw} one can approximate $1_{A_{\epsilon}}$ by an increasing and non-negative sequence $(z_n)\in W$ that converges pointwise q.e.\@ to $1_{A_{\epsilon}}$ in $\Omega$. By $w\in L_{\lambda}^2(\Omega)\cap \Lmu$ and $w>\epsilon$ q.e.\@ in $A_{\epsilon}$, it holds $\lambda(A_{\epsilon})<\infty$ and $\mu(A_{\epsilon})<\infty$. This implies $z_n\in L_{\lambda}^2(\Omega)\cap \Lmu$. Testing both \eqref{eq:lambdaIsMu1} and \eqref{eq:lambdaIsMu2} with $z_n$ yields $\io wz_n \dd \lambda = \io w z_n \dd\mu$, which in the limit $n\to\infty$ using dominated convergence gives
		\[ \lambda_{\epsilon}(O)=\int_{A_{\epsilon}}w\dd\lambda=\int_{A_{\epsilon}}w\dd\mu=\mu_{\epsilon}(O).\]
		Hence, $\lambda_{\epsilon}=\mu_{\epsilon}$ for all $\epsilon>0$ and therefore $\lambda_0=\mu_0$. To deduce $\lambda=\mu$, we first consider a Borel set $B$ with $B\subseteq\{w>0\}$. Then 
		\[\lambda(B)=\int_B\frac{1}{w}\dd\lambda_0=\int_B\frac{1}{w}\dd\mu_0 = \mu(B).\]
		If $B\subseteq \{w=0\}$ is a Borel set with $\caps(B)>0$, then $\lambda(B)=\mu(B)=\infty$ by Lemma \ref{lm:muInf}. If $\caps(B)=0$, then $\lambda(B)=\mu(B)=0$ as $\lambda,\mu\in\M$. For a general Borel set, 
		\[\lambda(B)=\lambda(B\cap\{w>0\})+\lambda(B\cap\{w=0\})=\mu(B\cap\{w>0\}+\mu(B\cap\{w=0\}=\mu(B).\]
	\end{proof}
	
	\begin{lemma} \label{lm:etaIsRadon}
		Let $0 \le \eta \in W^*$. Then $\eta$ is a Radon measure.
	\end{lemma}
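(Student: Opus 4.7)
The approach is a Riesz--Markov--Kakutani representation argument. The plan is to show that the restriction of $\eta$ to $\Cinf \subset W$ extends to a positive linear functional on $C_c(\Omega)$ that is continuous with respect to uniform convergence on functions with a common compact support. This yields a non-negative regular Borel measure $\mu$ representing $\eta$, hence a Radon measure in the sense of the paper, and the identity $\langle\eta,\varphi\rangle_W=\io \varphi\,\mathrm{d}\mu$ on $\Cinf$ is exactly condition \eqref{eq:forMeasureInDual}.

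The heart of the proof is the following sup-norm estimate: for every compact $K\subset\Omega$ there is a constant $C_K\ge 0$ such that
\[
|\langle\eta,\varphi\rangle_W| \le C_K\,\|\varphi\|_{\Linf}
\qquad \forall \varphi\in\Cinf \text{ with } \operatorname{supp}(\varphi)\subset K.
\]
To establish this I will pick any $\psi\in\Cinf$ with $0\le\psi\le 1$ and $\psi=1$ on a neighborhood of $K$. Since $\operatorname{supp}(\varphi)\subset K$, the functions $\|\varphi\|_{\Linf}\psi\pm\varphi\in W$ are pointwise (hence q.e.) non-negative in $\Omega$. The assumption $\eta\ge 0$ then gives
\[
\pm\langle\eta,\varphi\rangle_W \le \|\varphi\|_{\Linf}\,\langle\eta,\psi\rangle_W \le \|\varphi\|_{\Linf}\,\|\eta\|_{W^*}\,\|\psi\|_W,
\]
and taking the infimum over admissible $\psi$ together with Lemma~\ref{lm:equivCaps} yields $C_K = \|\eta\|_{W^*}\sqrt{\caps(K)}$.

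With this bound available, I will extend $\eta$ from $\Cinf$ to $C_c(\Omega)$. Given $f\in C_c(\Omega)$, mollification produces a sequence $(f_n)\subset\Cinf$ whose supports lie in a common compact subset of $\Omega$ and which converges to $f$ uniformly; the sup-norm estimate makes $(\langle\eta,f_n\rangle_W)$ a Cauchy sequence, so its limit $\tilde\eta(f)$ is well-defined, independent of the mollification, and linear in $f$. Non-negativity is preserved by mollification, so $\tilde\eta$ is a positive linear functional on $C_c(\Omega)$. The Riesz--Markov--Kakutani theorem on the locally compact Hausdorff space $\Omega$ then provides a unique non-negative regular Borel (i.e.\ Radon) measure $\mu$ with $\tilde\eta(f)=\io f\,\mathrm{d}\mu$ for all $f\in C_c(\Omega)$. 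Restricting to $\varphi\in\Cinf$ shows $\eta$ coincides with $\mu$ in the sense of \eqref{eq:forMeasureInDual}.

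The main obstacle is precisely the passage from the abstract $W^*$-bound, which controls $\langle\eta,\cdot\rangle_W$ only by the $W$-norm, to a bound by the $\Linf$-norm on test functions with prescribed compact support. Without such a bound one cannot invoke Riesz--Markov--Kakutani. The bridge is provided by the capacity-minimizing cut-off functions from Lemma~\ref{lm:equivCaps}, which convert $W$-positivity into pointwise domination of $|\varphi|$ by $\|\varphi\|_{\Linf}\psi$; everything else is standard.
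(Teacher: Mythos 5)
Your argument is correct. It differs from the paper in presentation rather than in substance: the paper does not prove the lemma directly, but notes $W^*\subset H^{-1}(\Omega)$ (via the dense embedding $H_0^1(\Omega)\hookrightarrow W$) and cites \cite[p.~564, Theorem 6.54]{34perturbationAna}, where a non-negative element of $H^{-1}(\Omega)$ is restricted to $C_c(\Omega)$, extended, and identified with a Radon measure. Your proof carries out exactly this mechanism, but self-contained and entirely inside $W^*$: positivity plus a smooth cut-off $\psi$ equal to $1$ near $K$ gives the sup-norm bound $|\braket{\eta,\varphi}_W|\le \|\varphi\|_{\Linf}\braket{\eta,\psi}_W\le \|\eta\|_{W^*}\|\psi\|_W\|\varphi\|_{\Linf}$ for $\operatorname{supp}\varphi\subset K$, which after mollification yields a positive linear functional on $C_c(\Omega)$ and hence, by Riesz--Markov--Kakutani, a regular Borel measure representing $\eta$ in the sense of \eqref{eq:forMeasureInDual}. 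What your route buys is independence from the external reference and from the embedding $W^*\hookrightarrow H^{-1}(\Omega)$, and as a by-product the quantitative estimate $|\braket{\eta,\varphi}_W|\le\|\eta\|_{W^*}\sqrt{\caps(K)}\,\|\varphi\|_{\Linf}$ (via Lemma~\ref{lm:equivCaps}), which anticipates the bound in Theorem~\ref{tm:radonInCap}; what the paper's route buys is brevity. Two cosmetic remarks: the infimum over $\psi$ is not needed --- any single admissible cut-off gives a constant $C_K$ sufficient for the extension --- and one should note (as you implicitly do) that $\Omega$ is $\sigma$-compact, so the measure produced by Riesz--Markov--Kakutani is regular on all Borel sets, matching the paper's definition of a Radon measure.
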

	\begin{proof}
		This was shown in \cite[p. 564]{34perturbationAna}. There, $0\le \eta\in H^{-1}(\Omega)$ is restricted to a linear form on $H^{-1}(\Omega)\cap C_c(\Omega)$ that can be extended to a linear form on $C_{c}(\Omega)$. \cite[Theorem 6.54]{34perturbationAna} shows that this form is a Radon measure. As $W^* \subset H^{-1}$, the claim follows. 
	\end{proof}
	
	Next, we consider the set $\mathcal{K}(\Omega)$ defined as  
	\begin{equation*} \mathcal{K}(\Omega) \coloneqq \left\{ w \in W: w \ge 0  \text{ in } \Omega,\, (w,v)_W \le \io v \dd x \,\, \forall\,\, 0\le v\in W \right\}. \end{equation*} 
	This set is also considered in \cite{121shapeOpNonloc} and is the analogon to the set considered in \cite{29capCompactness} for $s=1$. 
	In \cite[Proposition 3.3]{121shapeOpNonloc} it was shown that this set is convex, closed and bounded in $W$.
	\begin{lemma}\label{lm:KboundedInLinf}
		The set $\K$ is bounded in $\Linf$.
	\end{lemma}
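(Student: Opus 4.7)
The plan is to run a Stampacchia-type truncation argument on the defining inequality of $\K$. For a fixed $w \in \K$ and a level $t > 0$, I would like to test the inequality
\[ (w,v)_W \le \io v \dd x \qquad \forall \, 0 \le v \in W \]
with $v = (w-t)_+$. The first step is to verify that $(w-t)_+$ is an admissible test function, i.e. that it belongs to $W$. Since the shift by $t > 0$ only makes the function more negative near $\partial \Omega$ where $w$ already vanishes in the appropriate trace sense, the extension of $(w-t)_+$ by zero to $\R^d$ lies in $H^s(\R^d)$, so $(w-t)_+ \in \tilde H^s(\Omega) = H_0^s(\Omega)$ for $s \ne 1/2$; the spectral formulation and the case $s = 1/2$ are handled analogously. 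Also, when $d < 2s$ the statement is immediate from Proposition \ref{prop:propFrac}, so we may assume $d \ge 2s$.

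The next step is a variant of Lemma \ref{lm:inequPosPart}: I claim that $\|(w-t)_+\|_W^2 \le (w,(w-t)_+)_W$. For the $L^2$-part this is clear since $t \ge 0$. For the Gagliardo (and spectral) double integral, note that $w(x)-w(y) = (w(x)-t)-(w(y)-t)$, and the pointwise inequality $(a-b)(a_+ - b_+) \ge (a_+ - b_+)^2$ applied with $a = w(x)-t$, $b = w(y)-t$ yields the estimate verbatim. Combined with the test with $v = (w-t)_+$, this gives
\[ \|(w-t)_+\|_W^2 \le \int_{\{w > t\}} (w-t) \dd x. \]

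Now I invoke the continuous embedding $W \hookrightarrow L^q(\Omega)$ for some $q > 2$ from Proposition \ref{prop:propFrac}, together with Hölder's inequality with exponents $q$ and $q/(q-1)$ applied on the set $A_t \coloneqq \{w > t\}$. Writing $C_W$ for the embedding constant, this produces
\[ \|(w-t)_+\|_{L^q}^2 \le C_W^2 \|(w-t)_+\|_{L^q} \, |A_t|^{1-1/q}, \]
hence $\|(w-t)_+\|_{L^q} \le C_W^2 |A_t|^{1-1/q}$. For any $h > t \ge 0$, a Chebyshev-type estimate $(h-t)^q |A_h| \le \int_{A_t} (w-t)_+^q \dd x$ then gives the key recurrence
\[ |A_h| \le \frac{C_W^{2q}}{(h-t)^q} |A_t|^{q-1}. \]

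At this point I apply the classical Stampacchia lemma: since $\varphi(t) \coloneqq |A_t|$ is non-increasing on $[0,\infty)$ and satisfies $\varphi(h) \le M (h-k)^{-\alpha} \varphi(k)^{\beta}$ with $\alpha = q > 0$ and $\beta = q-1 > 1$, there exists $M_0 > 0$, depending only on $C_W$, $q$, and $\varphi(0) \le |\Omega|$, such that $\varphi(M_0) = 0$. This means $w \le M_0$ a.e.\@ in $\Omega$, and combined with $w \ge 0$, yields $\|w\|_{\Linf} \le M_0$ with $M_0$ independent of $w \in \K$. The only delicate points are the generalized positive-part inequality in Step 2 (which has to be argued once for each of the three equivalent realizations of $W$) and checking that $(w-t)_+ \in W$ when $s = 1/2$; the Stampacchia recursion itself is standard.
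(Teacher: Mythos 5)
Your argument is correct, but it takes a different route from the paper. The paper reduces the claim to a single dominating function: it takes $w_0\in W$ solving $(w_0,v)_W=\io v\dd x$ for all $v\in W$, cites \cite[Lemma 3.4]{5sNObstacle} (together with the embedding of Proposition \ref{prop:propFrac}) to get $w_0\in\Linf$, and then bounds every $w\in\K$ by $w_0$ via the comparison principle of Lemma \ref{lm:monOfSolsRelDir}. You instead run the Stampacchia truncation directly on an arbitrary $w\in\K$: the admissibility of $(w-t)_+$ as a test function, the generalized positive-part inequality $\|(w-t)_+\|_W^2\le(w,(w-t)_+)_W$ (checked for the Gagliardo, extended, and spectral realizations exactly as in Lemma \ref{lm:inequPosPart}, plus the trivial $L^2$/$\kappa$ parts since $t>0$), the $L^q$-embedding with $q>2$, and the resulting level-set recurrence with exponent $q-1>1$ are all sound, and the final bound depends only on $C_W$, $q$ and $|\Omega|$, hence is uniform on $\K$. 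In effect you inline the Stampacchia-type machinery that the paper outsources to \cite{5sNObstacle}, which buys a self-contained proof that needs neither the comparison principle nor the external citation (and, since Proposition \ref{prop:propFrac} gives an $L^q$-embedding with $q>2$ in every regime, your case distinction $d<2s$ is not even necessary; note also that your ``immediate'' treatment of that case tacitly uses the boundedness of $\K$ in $W$ from \cite[Proposition 3.3]{121shapeOpNonloc}, stated just before the lemma). What the paper's route buys in exchange is brevity and the explicit dominating solution $w_0$, which fits the comparison-principle framework used throughout Section \ref{sec:compact}; your Step 1 justification via traces is slightly informal and is cleaner if phrased as: extend $w$ by zero to $\tilde w\in H^s(\R^d)$ and observe that the Lipschitz truncation $(\tilde w-t)_+$ vanishes outside $\Omega$ and restricts to $(w-t)_+$.
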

	\begin{proof}
		As in \cite[page 10]{29capCompactness}, let $w_0$ be the solution of 
		\[(w,v)_W =\io v\dd x \qquad \forall \,\, v\in W.\] Then $w_0\in L^{\infty}(\Omega)$ by \cite[Lemma 3.4]{5sNObstacle} with $k_0=0$, $\sigma=(\frac1s+\frac2q-1)^{-1}<0$ and $1\in L^s(\Omega)$, using the embedding $W\hookrightarrow L^q(\Omega)$ from Proposition \ref{prop:propFrac} and the estimate $\|v_k\|_{L^q(\Omega)}^2\le c\|v_k\|_W^2 \le (v_k,v)_W$ with $v_k$ defined as in \cite{5sNObstacle}. 
		Due to Lemma \ref{lm:monOfSolsRelDir} this implies boundedness of $\K$ in $\Linf$.
	\end{proof}
	
	The set $\K$ can be characterized in terms of capacitary measures.
	
	\begin{proposition}\label{prop:charK}
		Let $z\in W$. Then $z\in \K$ if and only if there exists $\mu\in\M$ such that $z\in W\cap\Lmu$ and 
		\begin{equation}\label{eq:zForCharK}(z,v)_W + \io zv\dd\mu = \io v \dd x \qquad \forall v\in W\cap\Lmu.\end{equation}
		This measure $\mu\in\M$ is uniquely determined by $z$ via
		\begin{equation}\label{eq:defMuCharK}
			\mu(B)= \begin{cases}
				\int_B \frac{\dd\eta}{z} & \text{ if } \caps(B\cap\{z=0\})=0, \\ \infty & \text{ if }\caps(B\cap\{z=0\})>0,
		\end{cases}\end{equation}
		where $\eta\in W^*$ is the measure given by $\eta \coloneq 1-(z,\cdot)_W \coloneq (1,\cdot)_{\Ltwo} - (z,\cdot)_W$. Furthermore, for every Borel set $B\subseteq \Omega$ it holds 
		\[\eta(B\cap \{z>0\})=\int_B z\dd\mu.\]
	\end{proposition}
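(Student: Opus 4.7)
If such a $\mu \in \M$ exists, then since $v \mapsto \io v \dd x$ defines a non-negative functional in $W^*$, Lemma \ref{lm:wNonneg} gives $z \ge 0$ q.e.\@ on $\Omega$, and Lemma \ref{lm:estMesaureF} applied with $\eta$ equal to the Lebesgue measure (which is a Radon measure in $W^*$) gives $(z,v)_W \le \io v \dd x$ for every non-negative $v \in W$. Hence $z \in \K$.

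\textbf{Necessity and construction of $\mu$.} Let $z \in \K$. Set $\eta \coloneqq (1,\cdot)_{\Ltwo} - (z,\cdot)_W \in W^*$; the defining inequality of $\K$ makes $\eta$ non-negative on non-negative elements of $W$, so Lemma \ref{lm:etaIsRadon} identifies $\eta$ with a Radon measure on $\Omega$ and Theorem \ref{tm:radonInCap} places it in $\M$. I would then define $\mu$ by \eqref{eq:defMuCharK} and verify, in turn, that: (a) $\mu$ is a well-defined Borel measure, with countable additivity descending from that of $\eta$ and countable subadditivity of $\caps$ handling the dichotomy in \eqref{eq:defMuCharK}; (b) $\mu \in \M$, where property (i) of Definition \ref{def:capMeasure} is inherited from $\eta \in \M$ (since $\caps(B)=0$ forces $\caps(B\cap\{z=0\})=0$), and property (ii) follows by combining Radon regularity of $\eta$ on $\{z>0\}$ with the fact that $\mu \equiv +\infty$ on any quasi-open set whose intersection with $\{z=0\}$ has positive capacity; (c) $z \in \Lmu$, via the estimate
\begin{align*}
\io z^2 \dd\mu = \int_{\{z>0\}} z \dd\eta \le \eta(z) = \io z \dd x - \|z\|_W^2 < \infty,
\end{align*}
where the identification $\int v \dd\eta = \eta(v)$ for non-negative $v \in W$ rests on truncation to $\min(v,n) \in W \cap \Linf$ combined with monotone convergence on the measure side and continuity of $\eta$ on the functional side.

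\textbf{Equation, uniqueness, final identity.} For any $v \in W \cap \Lmu$, the finiteness of $\int v^2 \dd \mu$ combined with the definition of $\mu$ forces $v = 0$ q.e.\@ on $\{z=0\}$, and therefore
\begin{align*}
\io zv \dd \mu = \int_{\{z>0\}} zv \cdot \tfrac{1}{z} \dd\eta = \int_\Omega v \dd\eta = \io v \dd x - (z,v)_W,
\end{align*}
which is \eqref{eq:zForCharK}; the second equality here uses $v=0$ q.e.\@ on $\{z=0\}$ together with $\eta \in \M$. Uniqueness of the measure $\mu$ follows directly from Lemma \ref{lm:lambdaIsMu}, and the identity $\eta(B\cap\{z>0\}) = \int_B z \dd\mu$ is a direct rewriting of \eqref{eq:defMuCharK} using $z=0$ on $\{z=0\}$. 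The main obstacle I foresee is the verification of property (ii) in Definition \ref{def:capMeasure} for the constructed $\mu$, because of the radical change in behaviour across $\{z>0\}$ and $\{z=0\}$; a secondary technical point is the rigorous identification $\int v \dd\eta = \eta(v)$ for unbounded $v \in W$.
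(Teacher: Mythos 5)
Your outline coincides with the paper's argument at every step it actually carries out: sufficiency via Lemmas \ref{lm:wNonneg} and \ref{lm:estMesaureF}, the identification of $\eta$ as a non-negative Radon measure in $\M$ via Lemma \ref{lm:etaIsRadon} and Theorem \ref{tm:radonInCap}, the estimate $\io z^2\dd\mu=\int_{\{z>0\}}z\dd\eta<\infty$, the verification of \eqref{eq:zForCharK} using that $v\in\Lmu$ forces $v=0$ q.e.\@ on $\{z=0\}$, uniqueness via Lemma \ref{lm:lambdaIsMu}, and the final identity by definition of $\mu$. The genuine gap is exactly the point you flag as the main obstacle: property (ii) of Definition \ref{def:capMeasure} for the constructed $\mu$, and the hint you give does not work as stated. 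Outer regularity of the Radon measure $\eta$ does not transfer to $\mu=\tfrac1z\,\eta$ on $\{z>0\}$, because $1/z$ is unbounded near $\{z=0\}$: an open set $O\supseteq B$ with $\eta(O\setminus B)$ small may still carry arbitrarily large, even infinite, $\mu$-mass where $z$ is small; and the observation that $\mu\equiv+\infty$ on quasi-open sets meeting $\{z=0\}$ in positive capacity is of no help for the only nontrivial case, namely Borel sets $B$ with $\mu(B)<\infty$.

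The paper closes this by exploiting that condition (ii) asks for quasi-open, not open, approximating sets. For $\mu(B)<\infty$ (so $\caps(B\cap\{z=0\})=0$) one slices $B_n=B\cap\{\tfrac1n<z\le\tfrac1{n-1}\}$; on $\{z>\tfrac1n\}$ one has $\mu\le n\,\eta$, hence $\mu(\{z>\tfrac1n\})\le n^2\io z\dd\eta<\infty$, so the finite measures $\mu_n(\cdot)=\mu(\cdot\cap\{z>\tfrac1n\})$ are outer regular and provide open sets $O_n\supseteq B_n$ with $\mu_n(O_n)<\mu(B_n)+\epsilon\,2^{-n}$. The crucial move is to pass to $A_n=O_n\cap\{z>\tfrac1n\}$, which is quasi-open because $z$ is quasi-continuous, and which cuts off precisely the region where $1/z$ is uncontrolled, so that $\mu(A_n)=\mu_n(O_n)$. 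Adding the capacity-zero (hence quasi-open) set $A_0=B\cap\{z=0\}$, the set $A=\bigcup_{n\ge0}A_n$ is quasi-open, contains $B$, and satisfies $\mu(A)<\mu(B)+\epsilon$. Without this truncation-and-quasi-open-intersection argument (or an equivalent), your $\mu$ is not shown to belong to $\M$, so the necessity direction remains incomplete. The rest of your outline, including the secondary point about extending $\braket{\eta,v}_W=\io v\dd\eta$ from $C_c^{\infty}(\Omega)$ to $W$ by truncation and monotone convergence, is fine and matches what the paper does implicitly.
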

	\begin{proof}
		The proof is done as in \cite[Proposition 3.4]{29capCompactness}. Let $\mu\in\M$ and $z$ the solution of \eqref{eq:zForCharK}. As $z\ge 0$ by Lemma \ref{lm:wNonneg} and $(z, \cdot)_W \le 1$ by Lemma \ref{lm:estMesaureF}, one obtains $z\in\K$.\\
		For the reverse implication, let $z\in\K$ and $\mu$ as defined in \eqref{eq:defMuCharK}. We want to show $\mu\in\M$. As $0\le\eta \in W^*$,  Lemma \ref{lm:etaIsRadon} and Theorem \ref{tm:radonInCap} imply $\eta(B)=0$ and thus $\mu(B)=0$ for every Borel set $B$ with $\caps(B)=0$.
		For showing 
		\begin{equation}\label{eq:iiForM}\mu(B)=\inf\{\mu(A): A \text{ q.o., } B\subseteq A\}\end{equation}
		for every Borel set $B\subseteq \Omega$ with $\mu(B)<\infty$, let the measure $\mu_n$ be defined by $\mu_n(B)\coloneqq \mu(B\cap\{z>\frac{1}{n}\})$. Then it holds
		\[\mu_n(\Omega)=\mu(\{z>\frac{1}{n}\})\le n\eta(\{z>\frac{1}{n}\}) \le n^2\io z \dd\eta =n^2\left((1,z)_{\Ltwo} - (z,z)_W \right)<\infty.\]
		Let $B\subseteq\Omega$ be a Borel set with $\mu(B)<\infty$, so the definition of $\mu$ implies $\caps(B\cap\{z=0\})=0$. Define $B_n\coloneqq B\cap \{\frac{1}{n}<z\leq \frac{1}{n-1}\}$ for $n\ge 2$ and let $B_1\coloneqq \{1<z\}$. Then $\mu(B)=\sum_n\mu(B_n)$. Since $\mu_n(\Omega)<\infty$, for all $\epsilon>0$ and all $n\in\N$ there exists an open set $O_n$ such that $B_n\subseteq O_n\subseteq\Omega$ and $\mu_n(O_n)<\mu_n(B_n)+\frac{\epsilon}{2^n} =\mu(B_n) + \frac{\epsilon}{2^n}$. Define $A_n=O_n\cap\{z>\frac{1}{n}\} $. Since $z$ is q.c.\@, $A_n$ is q.o.\@ and it holds $B_n\subseteq A_n$ and $\mu(A_n) =\mu_n(O_n)<\mu(B_n)+\frac{\epsilon}{2^n}$. Let $A_0=B\cap\{w=0\}$ and $A=\cup_{n\ge 0} A_n$. Then $A$ is q.o., $B\subseteq A$ and $\mu(A)<\mu(B)+\epsilon$. As $\epsilon$ was arbitrary, this shows \eqref{eq:iiForM}. \\
		Next, we show that $z$ solves \eqref{eq:zForCharK}. By definition of $\mu$ it holds
		\[\io z^2 \dd \mu = \int_{\{z>0\}}z^2\dd\mu= \int_{\{z>0\}}z\dd\eta \le \io z \dd\eta = (1,z)_{\Ltwo}-(z,z)_W<\infty,\]
		so $z\in\Lmu$. Furthermore, it holds
		\begin{multline*}
			(z,v)_W+\io zv\dd\mu=(z,v)_W+\int_{\{z>0\}}zv\dd\mu \\= (z,v)_W +\int_{\{z>0\}}v\dd\eta=(z,v)_W+\io v\dd\eta = \io v \dd x.\end{multline*}
		Here, the last equality follows from $v=0$ q.e.\@ on $\{z=0\}$ since $v\in\Lmu$. 
		By Lemma \ref{lm:lambdaIsMu}, we obtain uniqueness of $\mu$.\\
		The last statement of the proposition follows directly from the definition of $\mu$.
	\end{proof}
	
	Next, we prove some auxiliary results.
	
	\begin{lemma}\label{lm:auxConvWzphi}
		Let $\varphi\in \Cinf$, $w_k\rightharpoonup \bar w$ and $z_k \rightharpoonup \bar z$ in $W$ with $w_k,z_k \in\Linf$. Then it holds 
		\[(w_k,z_k\varphi)_W - (z_k, w_k \varphi)_W \to (\bar w,\bar z \varphi)_W - (\bar z, \bar w \varphi)_W. \]
	\end{lemma}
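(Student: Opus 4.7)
My plan is to expand both inner products in integral form so that the $L^2$-parts cancel, then use an algebraic identity to reduce the non-local difference to a single skew-symmetric integral, and finally pass to the limit via a weak-strong duality argument in $L^2(\Omega\times\Omega)$.

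Writing $\Phi(x,y) := \varphi(x) - \varphi(y)$, the $L^2$-parts $\io w_k z_k \varphi \dd x$ and $\io z_k w_k \varphi \dd x$ (and their analogues $\io \kappa w_k z_k\varphi \dd x$ in the spectral case) coincide and cancel in the difference. For the non-local part, a direct expansion of all four products yields the pointwise identity
\[ (w_k(x)-w_k(y))(z_k(x)\varphi(x)-z_k(y)\varphi(y)) - (z_k(x)-z_k(y))(w_k(x)\varphi(x)-w_k(y)\varphi(y)) = (w_k(x)z_k(y)-w_k(y)z_k(x))\,\Phi(x,y), \]
so $(w_k,z_k\varphi)_W - (z_k,w_k\varphi)_W$ reduces to $\tfrac{c_{d,s}}{2}T_k$, where
\[ T_k \coloneqq \io\io \frac{(w_k(x)z_k(y)-w_k(y)z_k(x))\,\Phi(x,y)}{|x-y|^{d+2s}}\dd y\dd x, \]
with $J(x,y)$ replacing $c_{d,s}|x-y|^{-(d+2s)}$ in the spectral setting.

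Next I use the rewrite $w_k(x)z_k(y)-w_k(y)z_k(x) = (w_k(x)-w_k(y))z_k(y) + w_k(y)(z_k(y)-z_k(x))$ to split $T_k = P_k + Q_k$. For $P_k$ I factor the integrand as $F_k\cdot G_k$ with $F_k(x,y) = (w_k(x)-w_k(y))/|x-y|^{(d+2s)/2}$ and $G_k(x,y) = z_k(y)\Phi(x,y)/|x-y|^{(d+2s)/2}$. Since the map $u\mapsto (u(x)-u(y))/|x-y|^{(d+2s)/2}$ is bounded linear from $W$ into $L^2(\Omega\times\Omega)$, the weak convergence $w_k\rightharpoonup\bar w$ in $W$ transfers to $F_k\rightharpoonup\bar F$ in $L^2(\Omega\times\Omega)$. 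For $G_k$, one estimates
\[ \|G_k-\bar G\|_{L^2(\Omega\times\Omega)}^2 = \io (z_k(y)-\bar z(y))^2 \io \frac{\Phi(x,y)^2}{|x-y|^{d+2s}}\dd x \dd y \le C\|z_k-\bar z\|_{L^2(\Omega)}^2 \to 0 \]
using Lemma~\ref{lm:phiFracInCinf} for the inner integral together with the compact embedding $W\hookrightarrow L^2(\Omega)$ from Proposition~\ref{prop:propFrac}. Weak-strong duality in $L^2(\Omega\times\Omega)$ then yields $P_k\to\bar P$. A swap $x\leftrightarrow y$ (using $\Phi(y,x)=-\Phi(x,y)$) casts $Q_k$ into the analogous form with the roles of $w$ and $z$ interchanged, so the identical argument gives $Q_k\to\bar Q$, and reversing the algebra shows that $\bar P + \bar Q$ reassembles into the required limit integral.

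The main obstacle is that the hypothesis only asserts $w_k,z_k\in L^\infty(\Omega)$, not a uniform $L^\infty$-bound, so one cannot control $\|w_k\varphi\|_W$ and $\|z_k\varphi\|_W$ via Lemma~\ref{lm:prodOfWFuns}; the weak-strong splitting in $L^2(\Omega\times\Omega)$ precisely sidesteps this issue by never invoking the product $w_k\varphi$ as an element of $W$ with controlled norm. A minor bookkeeping point is that $\bar w\varphi, \bar z\varphi \in W$, so the right-hand side is well-defined even without $\bar w, \bar z \in L^\infty$; this follows from $\|f\varphi\|_W^2 \le C(\|f\|_{L^2}^2 + \|f\|_W^2)$ for $f\in W$, obtained via the splitting $(f\varphi)(x)-(f\varphi)(y)=f(x)\Phi(x,y)+\varphi(y)(f(x)-f(y))$ combined once more with Lemma~\ref{lm:phiFracInCinf}.
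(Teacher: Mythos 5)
Your proof is correct, and its skeleton matches the paper's: both cancel the local ($L^2$ resp. $\kappa$) parts, reduce the difference algebraically to the two integrals with integrands $(w_k(x)-w_k(y))\,(\varphi(x)-\varphi(y))\,z_k(y)$ and $(z_k(x)-z_k(y))\,(\varphi(x)-\varphi(y))\,w_k(y)$ (your identity via $(w_k(x)z_k(y)-w_k(y)z_k(x))\Phi(x,y)$ is just a more compact way to organize the same cancellation), and then pass to the limit by pairing a weakly convergent fractional difference quotient against a strongly convergent factor, using Lemma \ref{lm:phiFracInCinf} and the compact embedding of Proposition \ref{prop:propFrac}. Where you genuinely deviate is the limit passage: the paper first integrates out $y$, shows the resulting functions $R_{w_k,\varphi}$ are bounded in $L^2(\Omega)$, and identifies their weak limit by testing on arbitrary subdomains $\tilde\Omega$ before pairing with $z_k$ in $L^2(\Omega)$; you instead keep everything in $L^2(\Omega\times\Omega)$, where the fractional gradient is weak--weak continuous as a bounded linear map and $G_k\to\bar G$ strongly, so the weak--strong duality gives the limit in one step and the identification argument disappears. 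This is a clean simplification. You also handle a point the paper glosses over: the hypotheses give no uniform $L^\infty$ bound and do not guarantee $\bar w,\bar z\in L^\infty(\Omega)$, and your closing estimate $\|f\varphi\|_W^2\le C(\|f\|_{L^2(\Omega)}^2+\|f\|_W^2)$ justifies that $\bar w\varphi,\bar z\varphi\in W$, so the limit expression and the reassembly of $\bar P+\bar Q$ into $(\bar w,\bar z\varphi)_W-(\bar z,\bar w\varphi)_W$ are legitimate without extra boundedness assumptions (in the paper's application this is harmless since there $\bar z\in\K$ and $|\bar w|\le c\bar z$ are bounded). For completeness you should state, as the paper does, that the same computation runs with integrals over $\R^d$ for the $\tilde H^s(\Omega)$ choice of inner product, alongside the spectral case you already cover.
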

	\begin{proof}
		Let us first consider $W$ equipped with the $H^s(\Omega)$-inner product. We first note that by definition of that inner product, for $z,w\in W\cap \Linf$ it holds 
		\begin{align*} &(w,z\varphi)_W - (z, w \varphi)_W 
			\\&= \io\io\frac{(w(x)-w(y))(z(x)\varphi(x)-z(y)\varphi(y))}{|x-y|^{d+2s}} \dd y\dd x - \io\io\frac{(z(x)-z(y))(w(x)\varphi(x)-w(y)\varphi(y))}{|x-y|^{d+2s}} \dd y\dd x 
			\\ &= \io\io\frac{(w(x)-w(y))\left[(\varphi(x)-\varphi(y))z(y)+ (z(x)-z(y))\varphi(y) + (z(x)-z(y))(\varphi(x)-\varphi(y)) \right]}{|x-y|^{d+2s}} \dd y\dd x \\& -\io\io\frac{(z(x)-z(y))\left[(\varphi(x)-\varphi(y))w(y)+ (w(x)-w(y))\varphi(y) + (w(x)-w(y))(\varphi(x)-\varphi(y)) \right]}{|x-y|^{d+2s}} \dd y\dd x 
			\\ &= \io\io\frac{(w(x)-w(y))(\varphi(x)-\varphi(y))z(y)}{|x-y|^{d+2s}} \dd y\dd x -\io\io\frac{(z(x)-z(y))(\varphi(x)-\varphi(y))w(y)}{|x-y|^{d+2s}} \dd y\dd x.
		\end{align*}
		Here, the last equality holds since all summands in the terms are absolutely integrable.
		Furthermore, it holds by Hölder's inequality and Lemma \ref{lm:phiFracInCinf} that
		\begin{align*}
			&\io \left(\io \frac{(w_k(x)-w_k(y))(\varphi(x)-\varphi(y))}{|x-y|^{d+2s}}\dd y\right)^2 \dd x \\ \le &\io \left( \io \frac{(w_k(x)-w_k(y))^2}{|x-y|^{d+2s}} \dd y\right) \left( \io \frac{(\varphi(x)-\varphi(y))^2}{|x-y|^{d+2s}} \dd y\right) \dd x \\ \le &\left\|\io \frac{\varphi(x)-\varphi(y))^2}{|x-y|^{d+2s}} \dd y\right\|_{\Linf} \io  \io \frac{(w_k(x)-w_k(y))^2}{|x-y|^{d+2s}} \dd y \dd x,
		\end{align*} so 
		\[x\mapsto \io \frac{(w_k(x)-w_k(y))(\varphi(x)-\varphi(y))}{|x-y|^{d+2s}}\dd y \eqqcolon R_{w_k,\varphi}(x)\] is bounded in $L^2(\Omega)$ and therefore w.l.o.g. it holds $R_{w_k,\varphi}\rightharpoonup R$ for some $R\in L^2(\Omega)$. We now want to show $R=R_{\bar w,\varphi}$, with $R_{\bar w,\varphi}$ defined analogously to $R_{w_k,\varphi}$. To do so, we consider the linear and continuous operator 
		\[ \nabla^s: w \mapsto \frac{(w(x)-w(y))}{|x-y|^{d/2+s}}, \qquad \nabla^s \in \mathcal{L}(W, L^2(\Tilde{\Omega}\times \Omega)) \]
		for some $\Tilde{\Omega}\subseteq \Omega$. Weak convergence $w_k\rightharpoonup \bar w$ yields $\nabla^sw_k \rightharpoonup \nabla^s\bar w$ in $L^2(\Tilde{\Omega}\times\Omega)$ and thus
		\[ \int_{\tilde\Omega}\io \frac{(w_k(x)-w_k(y))(\varphi(x)-\varphi(y))}{|x-y|^{d+2s}}\dd y \dd x \to \int_{\tilde\Omega}\io \frac{(\bar w(x)-\bar w(y))(\varphi(x)-\varphi(y))}{|x-y|^{d+2s}}\dd y \dd x  \] 
		By weak convergence of $R_{w_k,\varphi}$ it also holds 
		\[ \int_{\tilde\Omega}\io \frac{(w_k(x)-w_k(y))(\varphi(x)-\varphi(y))}{|x-y|^{d+2s}}\dd y \dd x \to \int_{\tilde\Omega} R\, \dd x.  \]
		Since $\tilde\Omega$ was arbitrary, $R=R_{\bar w,\varphi}$ follows and we have $R_{w_k, \varphi}\rightharpoonup R_{\bar w, \varphi}$ in $\Ltwo$. Analogously, we obtain also $R_{z_k, \varphi}\rightharpoonup R_{\bar z, \varphi}$.
		Putting these results together, using Fubini's theorem and strong convergence of $(w_k), (z_k)$ in $\Ltwo$ yields  
		\begin{align*}
			&(w_k,z_k\varphi)_W - (z_k, w_k \varphi)_W \\ = &\io\io\frac{(w_k(x)-w_k(y))(\varphi(x)-\varphi(y))z_k(y)}{|x-y|^{d+2s}} \dd y\dd x \\ & \,-\io\io\frac{(z_k(x)-z_k(y))(\varphi(x)-\varphi(y))w_k(y)}{|x-y|^{d+2s}} \dd y\dd x 
			\\ \to &\io\io\frac{(\bar w(x)-\bar w(y))(\varphi(x)-\varphi(y))\bar z(y)}{|x-y|^{d+2s}} \dd y\dd x -\io\io\frac{(\bar z(x)-\bar z(y))(\varphi(x)-\varphi(y))\bar w(y)}{|x-y|^{d+2s}} \dd y\dd x 
			\\= &(\bar w,\bar z \varphi)_W - (\bar z, \bar w \varphi)_W.
		\end{align*}
		For the $\tilde H^s(\Omega)$-inner product for $W$, the proof works completely analogous by integrating over $\R^d$ instead of $\Omega$.  In the spectral case $\specHs$ one can proceed similarly as well due to the integral formulation. Note that the part involving $\kappa$ vanishes in $(w_k,z_k\varphi)_W - (z_k, w_k \varphi)_W$. 
	\end{proof} 
	
	Let $\mu_k,\bar\mu \in \M$ and let $z_k, \bar z$ be the solutions of the problems 
	\begin{align}\label{eq:relDirProbZk}
		(z_k,v)_W + \int_{\Omega}z_k v \dd \mu_k &= \int_{\Omega}v \dd x \qquad \forall v\in W\cap L_{\mu_k}^2(\Omega), \\
		\label{eq:relDirProbZ}
		(\bar z,v)_W + \int_{\Omega}\bar z v \dd \bar\mu &= \int_{\Omega}v \dd x \qquad \forall v\in W\cap L_{\bar\mu}^2(\Omega).
	\end{align}
	
	\begin{lemma}\label{lm:wPhiDense}
		Let $\bar\mu\in\M$ and $\bar z$ the solution of problem \eqref{eq:relDirProbZ}. Then the set $\{ \bar z \varphi : \varphi \in \Cinf\}$ is dense in  $W\cap L_{\bar\mu}^2(\Omega)$ .
	\end{lemma}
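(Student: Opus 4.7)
The plan is a duality argument in the Hilbert space $H \coloneqq W\cap L_{\bar\mu}^2(\Omega)$, equipped with the inner product $\langle u,v\rangle_H \coloneqq (u,v)_W + \io uv\,d\bar\mu$. Density of $\{\bar z\varphi : \varphi\in\Cinf\}$ in $H$ is equivalent to showing that any $u\in H$ satisfying
\[(u,\bar z\varphi)_W + \io u\bar z\varphi\,d\bar\mu = 0\qquad \forall\varphi\in\Cinf\]
must vanish, so I fix such a $u$ and aim to conclude $u=0$.

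First I would reduce to the case $u\in\Linf$ by truncation $u^N\coloneqq\max(\min(u,N),-N)$; standard fractional truncation estimates together with dominated convergence in $L_{\bar\mu}^2(\Omega)$ yield $u^N\to u$ in $H$, so it is enough to treat bounded $u$. For $u\in H\cap\Linf$ and $\varphi\in\Cinf$ the product $u\varphi$ lies in $W$ by Lemma~\ref{lm:prodOfWFuns}, and boundedness of $\varphi$ combined with $u\in L_{\bar\mu}^2(\Omega)$ gives $u\varphi\in L_{\bar\mu}^2(\Omega)$, so $u\varphi$ is admissible as a test function in equation \eqref{eq:relDirProbZ} for $\bar z$. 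Testing \eqref{eq:relDirProbZ} against $u\varphi$ and subtracting the orthogonality condition above produces the commutator identity
\[(u,\bar z\varphi)_W - (\bar z,u\varphi)_W = -\io u\varphi\,dx\qquad \forall \varphi\in\Cinf.\]

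The step I expect to be the main obstacle is extracting $u=0$ from this commutator identity. In the classical case $s=1$, the Leibniz rule collapses the left-hand side to $\int(\bar z\nabla u-u\nabla\bar z)\cdot\nabla\varphi\,dx$, and combined with the equation for $\bar z$ this forces $u$ to solve the homogeneous relaxed Dirichlet problem, yielding $u=0$ by uniqueness. In the fractional setting no pointwise product rule is available, so I would instead rewrite $(u,\bar z\varphi)_W - (\bar z,u\varphi)_W$ as the single double integral with antisymmetric factor $u(x)\bar z(y)-\bar z(x)u(y)$ derived in the proof of Lemma~\ref{lm:auxConvWzphi}. Using that representation together with the bounds from Lemma~\ref{lm:phiFracInCinf}, both sides of the commutator identity depend continuously on $\varphi$ in the $W$-norm along sequences with uniform $\Linf$ bounds, so mollification combined with truncation propagates the identity from $\varphi\in\Cinf$ to $\varphi\in W\cap\Linf$. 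Inserting $\varphi = u\chi/\bar z_\epsilon$, where $\chi\in\Cinf$ is a smooth cut-off and $\bar z_\epsilon$ is a regularization of $\bar z$ bounded below by $\epsilon$ (admissibility guaranteed by Lemmas~\ref{lm:prodOfWFuns} and~\ref{lm:zDivW}), and passing to the limits $\epsilon\to 0^+$ and $\chi\to 1$ while using Lemma~\ref{lm:muInf} to discard the set $\{\bar z=0\}$, should recover the homogeneous version of \eqref{eq:relDirProb} for $u$; uniqueness of its solution then forces $u=0$.
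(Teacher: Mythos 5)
Your duality set-up has two genuine gaps, and the second one is exactly the core difficulty of the lemma, so the argument does not go through as proposed. First, the opening reduction ``it is enough to treat bounded $u$'' is not valid in the duality frame: you fix $u$ orthogonal in $H=W\cap L^2_{\bar\mu}(\Omega)$ to all $\bar z\varphi$, but the truncation $u^N$ need not satisfy that orthogonality relation, so proving $u^N=0$ is neither possible nor helpful, and $u^N\to u$ in $H$ does not transfer the hypothesis to $u^N$. (For the commutator identity itself boundedness of $u$ is actually not needed, since $u\varphi\in W$ for $\varphi\in\Cinf$ can be shown via Lemma \ref{lm:phiFracInCinf}; but you do use $u\in\Linf$ essentially later, when forming $u\chi/\bar z_\epsilon$ via Lemmas \ref{lm:prodOfWFuns} and \ref{lm:zDivW}, so the flawed reduction is not harmless.)

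Second, and decisively: the passage $\epsilon\to 0$, $\chi\to 1$ with $\varphi_\epsilon=u\chi/\bar z_\epsilon$ is unjustified, and no amount of pointwise information fixes it. If you substitute the equation \eqref{eq:relDirProbZ} for $\bar z$ (tested with $u\varphi_\epsilon$) back into your commutator identity, inserting $\varphi_\epsilon$ is literally equivalent to asserting the original orthogonality relation at $\varphi=\varphi_\epsilon$, i.e. $(u,\bar z\varphi_\epsilon)_W+\io u\,\bar z\varphi_\epsilon\,\dd\bar\mu=0$; to conclude $\|u\|_W^2+\|u\|_{L^2_{\bar\mu}}^2=0$ you must pass to the limit in the $W$-inner product, which requires $\bar z\varphi_\epsilon=u\chi\min(\bar z/\epsilon,1)\to u\chi$ at least weakly in $W$, hence a uniform $W$-bound on $u\chi\min(\bar z/\epsilon,1)$. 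The seminorm of $\min(\bar z/\epsilon,1)$ blows up like $1/\epsilon$, and the only thing that can compensate is a pointwise domination $|u|\le c\,\bar z$; Lemma \ref{lm:muInf} only gives $u=0$ q.e.\ on $\{\bar z=0\}$ (e.g.\ $u\sim\sqrt{\bar z}$ is not excluded), which controls pointwise limits but not the $W$-norms of the quotients. This is precisely the obstruction the paper's proof removes before ever dividing by $\bar z$: it works on the primal side, approximates a bounded nonnegative target $w$ by the solutions $w_k$ of the penalized problems \eqref{eq:eqNDiff} (Lemma \ref{lm:eqNDiff}), uses the comparison principle of Lemma \ref{lm:monOfSolsRelDir} to get $0\le w_k\le k\|w\|_{\Linf}\bar z$, and then shifts by $(w-c\epsilon)_+$ so that the support lies in $\{\bar z>\epsilon\}$ and $w/\max(\bar z,\epsilon)$ is a single fixed element of $W\cap\Linf$ -- no limit of quotients is needed. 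Your proposal contains no analogue of this domination step for the dual element $u$, and without it the limit you need is essentially equivalent to the density statement being proved (approximating $u$ itself by functions of the form $\bar z\varphi$), so the argument is close to circular at its key point.
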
 
	\begin{proof}
		We follow the proof of \cite[Proposition 5.5]{115DirInPerf}.
		Since $\bar z\in\Linf \cap L_{\bar\mu}^2(\Omega)$, we obtain $\bar z \varphi\in W\cap\Linf \cap L_{\mu}^2(\Omega) $ by Lemma \ref{lm:prodOfWFuns} for all $\varphi\in\Cinf$. As every function in $W\cap L_{\bar\mu}^2(\Omega)$ can be approximated by cut-off functions,  
		it suffices to show the existence of an approximating sequence for each $w\in W\cap\Linf \cap L_{\bar\mu}^2(\Omega)$ and $w\ge 0$. 
		For such a $w$, let $w_k$ be the solution of problem \eqref{eq:eqNDiff} in Lemma \ref{lm:eqNDiff}. Then we can employ the comparison principle in Lemma \ref{lm:monOfSolsRelDir} to $w_k$ and $\bar z$ with $\dd\mu_1 = \dd \bar\mu + k\dd x$, $\dd \mu_2 = \dd\bar\mu$, $f_1=kw$ and $f_2=k\|w\|_{\Linf}$ to obtain $w_k\le k\|w\|_{\Linf}\bar z$. Furthermore, Lemma \ref{lm:eqNDiff} yields $w_k\to w$ in $W$ and $L_{\bar\mu}^2(\Omega)$. Hence, we can assume w.l.o.g. that there is $c>0$ such that $0\le w \le c \bar z$.
		It holds $\{(w-c\epsilon) >0 \}\subseteq \{\bar z>\epsilon\}$ 
		and $(w-c\epsilon)_+\to w$ in $W$ and $L_{\bar\mu}^2(\Omega)$ for $\epsilon\to 0$. 
		Thus, we can also assume that $\{w>0\}\subseteq \{\bar z>\epsilon\}$. This assures that $\frac{w}{\bar z}=\frac{w}{\max(\bar z,\epsilon)}$ is in $W\cap\Linf$ by Lemma \ref{lm:zDivW}. By density of $\Cinf$ in $W$, there is a sequence $(\varphi_k)\subset \Cinf$ bounded in $\Linf$ such that $\varphi_k\to \frac{w}{\bar z}$ in $W$ and also q.e.\@ in $\Omega$, so $\bar\mu$-a.e. in $\Omega$.
		As both $\bar z$ and $\varphi_k$ are in $W\cap\Linf$, it holds $\bar z\varphi_k\to \bar z \frac{w}{\bar z}=\bar z$ in $W$. 
		The sequence $\varphi_k$ is bounded in $\Linf$ and converges to $\frac{w}{\bar z}$ $\bar\mu$-a.e., so $z\varphi_k \to \bar z\frac{w}{\bar z}=z$ strongly in $L_{\bar\mu}^2(\Omega)$ by dominated convergence.
	\end{proof}

	\begin{lemma} \label{lm:equivZkMuk}
		Let $(\mu_k)\subset\M$ be a sequence of measures, $\bar\mu \in \M$ and let $z_k \in W\cap L_{\mu_k}^2(\Omega)$ and $\bar z\in W\cap L_{\bar\mu}^2(\Omega)$ be the solutions of the problems \eqref{eq:relDirProbZk} and \eqref{eq:relDirProbZ}. Then these two statements are equivalent:
		\begin{enumerate}[label=(\roman*)]
			\item $z_k\rightharpoonup \bar z$ in $W$
			\item $\mu_k \gammaTo \bar\mu$.
		\end{enumerate}
	\end{lemma}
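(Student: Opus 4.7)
The direction $(ii) \Rightarrow (i)$ is immediate: the functional $v \mapsto \int_\Omega v\,dx$ is represented by $1 \in \Linf \subset W^*$, so applying the definition of $\gamma$-convergence to this particular $f$ gives $z_k \rightharpoonup \bar z$ in $W$.

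For $(i) \Rightarrow (ii)$, I fix $f \in W^*$ and let $w_k$ solve \eqref{eq:relDirProbWk}. The a priori bound \eqref{eq:wkBounded} yields (up to a subsequence) $w_k \rightharpoonup w^*$ in $W$, strongly in $\Ltwo$ by Proposition \ref{prop:propFrac}; by uniqueness of the weak limit it suffices to show $w^* = \bar w$. Using density of $\Linf$ in $W^*$ (Lemma \ref{lm:LinfDenseW}), splitting any $\Linf$-right-hand-side into positive and negative parts, and the uniform stability estimate $\|w_k^{f_1} - w_k^{f_2}\|_W \le \|f_1-f_2\|_{W^*}$ (obtained by testing the difference equation with itself and discarding the nonnegative $\mu_k$-term), I reduce to the case $0 \le f \in \Linf$. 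For such an $f$, Lemma \ref{lm:monOfSolsRelDir} applied with $f_1 = f$, $f_2 = \|f\|_\Linf$, $\mu_1 = \mu_2 = \mu_k$ gives $0 \le w_k \le \|f\|_\Linf z_k$, which together with Lemma \ref{lm:KboundedInLinf} yields a uniform $\Linf$-bound that passes to $0 \le w^* \le \|f\|_\Linf \bar z$ in the limit.

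The key computation tests \eqref{eq:relDirProbWk} against $v = z_k \varphi$ and \eqref{eq:relDirProbZk} against $v = w_k \varphi$ for $\varphi \in \Cinf$; both test functions belong to $W \cap L_{\mu_k}^2(\Omega)$ by Lemma \ref{lm:prodOfWFuns} and the uniform $\Linf$-bounds. Subtracting the two equations cancels the $\mu_k$-integrals, leaving
\[
(w_k, z_k \varphi)_W - (z_k, w_k \varphi)_W = \int_\Omega f z_k \varphi\,dx - \int_\Omega w_k \varphi\,dx.
\]
Lemma \ref{lm:auxConvWzphi} lets me pass to the limit on the left, while strong $\Ltwo$-convergence of $(z_k), (w_k)$ handles the right, producing
\[
(w^*, \bar z \varphi)_W - (\bar z, w^* \varphi)_W = \int_\Omega f \bar z \varphi\,dx - \int_\Omega w^* \varphi\,dx.
\]
Since $|w^* \varphi| \le \|f\|_\Linf \|\varphi\|_\Linf \bar z$, one has $w^* \varphi \in W \cap L_{\bar\mu}^2(\Omega)$ by Lemma \ref{lm:prodOfWFuns} and $\bar z \in L_{\bar\mu}^2(\Omega)$. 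I can therefore test \eqref{eq:relDirProbZ} with $w^* \varphi$ and substitute the resulting identity $(\bar z, w^* \varphi)_W = \int_\Omega w^* \varphi\,dx - \int_\Omega w^* \bar z \varphi\,d\bar\mu$ into the limit equation to obtain
\[
(w^*, \bar z \varphi)_W + \int_\Omega w^* \bar z \varphi\,d\bar\mu = \int_\Omega f \bar z \varphi\,dx = \langle f, \bar z \varphi\rangle_W.
\]
This is precisely \eqref{eq:relDirProbW} tested with $v = \bar z \varphi$; density of $\{\bar z \varphi : \varphi \in \Cinf\}$ in $W \cap L^2_{\bar\mu}(\Omega)$ from Lemma \ref{lm:wPhiDense} extends the identity to all admissible test functions, so $w^* = \bar w$ and the whole sequence converges.

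The main technical obstacle is to make $w_k \varphi$ and $w^* \varphi$ legal test functions simultaneously in the $\mu_k$- and $\bar\mu$-problems, despite $\bar\mu$ possibly being infinite on sets of positive Lebesgue measure. Reducing to $0 \le f \in \Linf$ is what delivers the uniform $\Linf$-bound on $(w_k)$ and the comparison $w^* \le \|f\|_\Linf \bar z$, which together force $\bar\mu$-integrability of $w^* \varphi$ and make the substitution step well-defined. Lemma \ref{lm:auxConvWzphi} then absorbs the nonlocal subtleties of the fractional $W$-inner product when passing to the limit in the cross-product terms.
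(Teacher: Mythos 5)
Your proof is correct, and its skeleton is the same as the paper's: the implication $(ii)\Rightarrow(i)$ via $f\equiv 1$, and for $(i)\Rightarrow(ii)$ the reduction to $f\in\Linf$ by the uniform stability estimate and Lemma \ref{lm:LinfDenseW}, the comparison $|w_k|\le \|f\|_{\Linf} z_k$, the test-and-subtract computation with $z_k\varphi$ and $w_k\varphi$, the passage to the limit via Lemma \ref{lm:auxConvWzphi}, and the final density step via Lemma \ref{lm:wPhiDense}. Where you genuinely deviate is the identification of the limit equation: the paper introduces $\eta\coloneqq 1-(\bar z,\cdot)_W$, invokes Theorem \ref{tm:radonInCap} and the characterization $\eta(B\cap\{\bar z>0\})=\int_B \bar z\dd\bar\mu$ from Proposition \ref{prop:charK} to convert $\int_\Omega \bar w\varphi\dd\eta$ into $\int_\Omega \bar w\bar z\varphi\dd\bar\mu$, whereas you bypass $\eta$ and Proposition \ref{prop:charK} entirely by observing that $w^*\varphi$ is itself an admissible test function in \eqref{eq:relDirProbZ} and substituting the resulting identity. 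This shortcut is available precisely because in this lemma $\bar\mu$ is given (the paper's $\eta$-route is the one that also serves Theorem \ref{tm:MCompact}, where the limit measure must be constructed from $\bar z$), and it buys a somewhat more elementary argument; your explicit reduction to $0\le f\in\Linf$ also makes the application of Lemma \ref{lm:monOfSolsRelDir} cleaner than the paper's direct claim $|w_k|\le c z_k$ for sign-changing $f$. The only point to flag is that your step $w^*\varphi\in L^2_{\bar\mu}(\Omega)$ needs the comparison $0\le w^*\le \|f\|_{\Linf}\bar z$ to hold quasi-everywhere (hence $\bar\mu$-a.e.), not merely Lebesgue-a.e.; this follows, e.g., by a Mazur-type argument giving q.e.\@ convergence of convex combinations, and the paper relies on exactly the same q.e.\@ limit passage for its $\eta$-step, so this is a shared (and repairable) level of detail rather than a gap in your argument.
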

	\begin{proof}
		We transfer the proof of \cite[Theorem 6.3]{115DirInPerf} (see also \cite[Theorem 4.3]{29capCompactness}) to the fractional setting here.
		To obtain $(i)$ from $(ii)$ one can simply choose $f\equiv 1 $ in the definition of $\gamma$-convergence.\\
		For the reverse direction, let $\varphi \in \Cinf$ and $f\in \Linf$. Let $(w_k)$ be the solutions of \eqref{eq:relDirProbWk}. Then $(w_k)$ is bounded in $W$ as a consequence of estimate \eqref{eq:wkBounded} and therefore w.l.o.g. there is $\bar w\in W$ such that $w_k$ converges weakly in $W$ to some $\bar w$. First note that $|w_k|\leq c z_k$  due to Lemma \ref{lm:monOfSolsRelDir} with $c=\|f\|_{\Linf}$ and therefore also $|\bar w|\leq c \bar z$ q.e.\@ after passing to the limit $k\to\infty$. Testing \eqref{eq:relDirProbWk} and \eqref{eq:relDirProbZk} with $z_k\varphi$ and $w_k\varphi$, respectively, yields
		\[ (w_k,z_k\varphi)_W + \int_{\Omega} w_k z_k\varphi \dd \mu_k = \int_{\Omega} fz_k\varphi  \dd x\]
		and     \[ (z_k,w_k\varphi)_W + \int_{\Omega} w_k z_k\varphi \dd \mu_k = \int_{\Omega} w_k\varphi  \dd x\]
		Subtracting these two equations yields 
		\[ (w_k,z_k\varphi)_W -(z_k,w_k \varphi)_W = \int_{\Omega} (fz_k-w_k)\varphi \dd x, \] 
		which by Lemma \ref{lm:auxConvWzphi} converges to 
		\[ (\bar w,\bar z\varphi)_W -(\bar z,\bar w \varphi)_W = \int_{\Omega} (f \bar z-\bar w)\varphi \dd x. \] 
		Setting $\eta\coloneqq 1-(\bar z,\cdot)_W$, this can be rewritten as 
		\[ (\bar w,\bar z\varphi)_W  + \int_{\Omega} \bar w \varphi \dd\eta = \int_{\Omega} f \bar z\varphi \dd x. \] 
		Since $\bar z\in \K$, $\eta$ is a non-negative Radon measure in $W^*$, so by Theorem \ref{tm:radonInCap} also contained in $\M$. Let  $\bar \mu$ be the measure from Proposition \ref{prop:charK} corresponding to $\bar z$.
		Thus, 
		\begin{gather*} \int_{\Omega} \bar w \varphi \dd \eta = \int_{\Omega \cap \{\bar z>0\}} \bar w\varphi \dd \eta+\int_{\Omega \cap \{\bar z=0\}} \bar w\varphi \dd \eta= \int_{\Omega \cap \{\bar z>0\}} \bar w\varphi \dd \eta  
			\\= \int_{\Omega \cap \{\bar z>0\}} \bar w\varphi \bar z \dd \bar \mu = \int_{\Omega} \bar w\varphi \bar z \dd \bar \mu. \end{gather*}
		Here, the second equality follows from $|\bar w|\leq c \bar z$ and the third one from the characterization of $\bar \mu$ in Proposition \ref{prop:charK}. Since $\varphi$ was arbitrary, it holds 
		\[ (\bar w,\bar z\varphi)_W  + \int_{\Omega} \bar z \bar w \varphi \dd \bar \mu = \int_{\Omega} f \bar z\varphi \dd x \qquad \forall \varphi \in \Cinf. \] 
		By density of $\{\bar z \varphi : \varphi \in \Cinf\}$ in $W\cap L_{\bar \mu}^2(\Omega)$ from Lemma \ref{lm:wPhiDense}, this also holds for all $\varphi \in W\cap L_{\bar \mu}^2(\Omega)$.
		Since the solutions $w_k$ of \eqref{eq:relDirProbWk} depend continuously on $f$ by the estimate $\|w_k\|_W \le \|f\|_{W^*}$ as a result of testing \eqref{eq:relDirProbWk} with $w_k$, it suffices to consider $f$ in the dense subset $\Linf$ of $W^*$ (see Lemma \ref{lm:LinfDenseW}) in the definition of $\gamma$-convergence. 
	\end{proof}
	
	\begin{theorem}\label{tm:MCompact}
		$\M$ is compact with respect to $\gamma$-convergence, i.e. for every sequence $(\mu_k)$ of measures in $\M$ there exists $\bar\mu\in\M$ such that $\mu_k\gammaTo\bar\mu$.
	\end{theorem}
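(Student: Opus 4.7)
The plan is to reduce compactness of $\M$ with respect to $\gamma$-convergence to weak sequential compactness of $\K$ in $W$, exploiting the bijective correspondence between $\M$ and $\K$ provided by Proposition \ref{prop:charK} together with the equivalence established in Lemma \ref{lm:equivZkMuk}.

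Given an arbitrary sequence $(\mu_k) \subset \M$, I would first associate to each $\mu_k$ the unique solution $z_k \in W \cap L_{\mu_k}^2(\Omega)$ of the relaxed Dirichlet problem \eqref{eq:relDirProbZk}, i.e.\@ the one with right-hand side $f \equiv 1$. By the easy direction of Proposition \ref{prop:charK} (namely that $z \in W$ solving \eqref{eq:zForCharK} for some $\mu \in \M$ automatically belongs to $\K$), the entire sequence $(z_k)$ lies in $\K$. Since $\K$ is convex, closed and bounded in $W$ by Proposition 3.3 of \cite{121shapeOpNonloc} and $W$ is a Hilbert space, $\K$ is weakly sequentially compact, so a subsequence (not relabeled) satisfies $z_k \rightharpoonup \bar z$ in $W$ for some $\bar z \in \K$.

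Next I would apply the reverse direction of Proposition \ref{prop:charK} to $\bar z \in \K$ to obtain a unique capacitary measure $\bar\mu \in \M$, defined via formula \eqref{eq:defMuCharK}, such that $\bar z$ solves the relaxed Dirichlet problem \eqref{eq:relDirProbZ} associated with $\bar\mu$. At this point Lemma \ref{lm:equivZkMuk} translates the weak convergence $z_k \rightharpoonup \bar z$ in $W$ directly into $\gamma$-convergence $\mu_k \gammaTo \bar\mu$ for the extracted subsequence, which is the desired statement (compactness being understood in the sequential sense, i.e.\@ existence of a $\gamma$-convergent subsequence).

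The genuine work has already been carried out in the preceding lemmas: Proposition \ref{prop:charK} supplies the bijection $\M \leftrightarrow \K$, and Lemma \ref{lm:equivZkMuk} converts weak convergence of the $z_k$ into $\gamma$-convergence of the $\mu_k$. Consequently I do not expect any substantial obstacle in this final step; the proof amounts to a short composition of these two results with the weak compactness of $\K$, and the only care needed is the routine passage to a subsequence.
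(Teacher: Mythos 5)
Your proposal is correct and follows essentially the same route as the paper: the paper likewise maps each $\mu_k$ to the solution $z_k\in\K$ of \eqref{eq:relDirProbZk} via Proposition \ref{prop:charK}, extracts a weakly convergent subsequence using the convexity, closedness and boundedness of $\K$, recovers $\bar\mu\in\M$ from the limit $\bar z$ by the converse direction of Proposition \ref{prop:charK}, and concludes with Lemma \ref{lm:equivZkMuk}. The only cosmetic difference is that you make the subsequence extraction and the weak closedness of $\K$ explicit, which the paper leaves implicit.
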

	\begin{proof}
		The proof works just as in \cite[Theorem 4.5]{29capCompactness} and \cite[Theorem 6.5]{115DirInPerf}. Let $(\mu_k)$ be a sequence of measures in $\M$ and let $(z_k)$ be the solutions in $W\cap L_{\mu_k}^2(\Omega)$ of problem \eqref{eq:relDirProbZk}. Proposition \ref{prop:charK} yields $z_k\in\K$, and by compactness of $\K$ there exists a subsequence that converges weakly in $W$ to some $\bar z\in W$. Again by Proposition $\ref{prop:charK}$, there is $\bar\mu\in\M$ such that $\bar z$ solves \eqref{eq:relDirProbZ}. Thus, the result follows from Lemma \ref{lm:equivZkMuk}. 
	\end{proof}
	
	Next, we want to show that $\gamma$-convergence also implies weak convergence of solutions of \eqref{eq:relDirProbWk} with a sequence of strongly convergent right-hand sides.
	\begin{lemma}\label{lm:gammaConvVarRHSHs}
		Let $(\mu_k)$ be a sequence of capacitary measures that $\gamma$-converges to $\Bar{\mu}$,  $(f_k)\subset W^*$ with $f_k \to f$ in $W^*$ and let $(w_k)$ be the solutions of 
		\[ (w_k,v)_W + \int_{\Omega} w_k v \dd \mu_k = \braket{f_k, v}_W \qquad \forall v\in W \cap L_{\mu_k}^2(\Omega).\]  Then it holds $w_k\rightharpoonup \bar w$, where $\bar w$ solves \[ (\bar w,v)_W + \int_{\Omega} \Bar{w} v \dd \bar{\mu} = \braket{f,v}_W \qquad \forall v\in W\cap L_{\bar\mu}^2(\Omega). \]
	\end{lemma}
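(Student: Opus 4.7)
The plan is to reduce the problem to the already-proven $\gamma$-convergence statement by exploiting linearity in the right-hand side. For each $k$, let $u_k \in W \cap L^2_{\mu_k}(\Omega)$ denote the solution of the relaxed Dirichlet problem
\begin{equation*}
(u_k, v)_W + \io u_k v \dd \mu_k = \braket{f, v}_W \qquad \forall v \in W \cap L^2_{\mu_k}(\Omega),
\end{equation*}
i.e., the analogue of \eqref{eq:relDirProbWk} with the fixed right-hand side $f$ in place of $f_k$. By the definition of $\gamma$-convergence and $\mu_k \gammaTo \bar\mu$, it holds $u_k \rightharpoonup \bar w$ in $W$, where $\bar w$ is the solution of the limit problem with right-hand side $f$.

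Next I would set $r_k \coloneqq w_k - u_k \in W \cap L^2_{\mu_k}(\Omega)$. By linearity of the bilinear form on the left-hand side (with $\mu_k$ fixed for each $k$), $r_k$ satisfies
\begin{equation*}
(r_k, v)_W + \io r_k v \dd \mu_k = \braket{f_k - f, v}_W \qquad \forall v \in W \cap L^2_{\mu_k}(\Omega).
\end{equation*}
Testing with $v = r_k$ and dropping the non-negative measure term, exactly as in the derivation of \eqref{eq:wkBounded}, yields
\begin{equation*}
\|r_k\|_W^2 \le \|r_k\|_W^2 + \|r_k\|_{L^2_{\mu_k}(\Omega)}^2 = \braket{f_k - f, r_k}_W \le \|f_k - f\|_{W^*} \|r_k\|_W,
\end{equation*}
so $\|r_k\|_W \le \|f_k - f\|_{W^*} \to 0$, i.e.\ $r_k \to 0$ strongly in $W$.

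Combining the weak convergence $u_k \rightharpoonup \bar w$ with the strong convergence $r_k \to 0$ gives $w_k = u_k + r_k \rightharpoonup \bar w$ in $W$, which is the claim. There is no real obstacle here beyond observing that the splitting $w_k = u_k + r_k$ is well-defined within $W \cap L^2_{\mu_k}(\Omega)$ (so that $u_k$ and $r_k$ are both legitimate test functions in their respective equations), and that the a priori estimate \eqref{eq:wkBounded} transfers verbatim to $r_k$ since it depends only on the structure of the bilinear form and not on the specific right-hand side.
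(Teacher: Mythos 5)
Your argument is correct and coincides with the paper's own proof: the paper likewise introduces the auxiliary solutions with right-hand side $f$ and measure $\mu_k$, uses $\gamma$-convergence to get their weak convergence to $\bar w$, and then subtracts the two equations and applies the a priori estimate \eqref{eq:wkBounded} to the difference to conclude $\|w_k-\tilde w_k\|_W \le \|f_k-f\|_{W^*} \to 0$. Your write-up just makes the subtraction step (testing the difference equation with $r_k$ and dropping the nonnegative measure term) explicit.
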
 
	\begin{proof} 
		This proof replaces $H_0^1(\Omega)$ in the proof of \cite[Proposition 4.8]{29capCompactness} by $W$. Let $\tilde w_k$ be the solutions of \eqref{eq:relDirProbZk} with right-hand side $f$. Since $\mu_k \gammaTo \bar\mu$ it holds $\tilde w_k \rightharpoonup \bar{w}$ in $W$ and $\bar W \in L_{\bar \mu}^2(\Omega)$.  Subtracting the equations for $w_k$ and $\tilde w_k$ and using estimate \eqref{eq:wkBounded}, we obtain 
		\[\|w_k-\tilde w_k\|_W \le \|f_k-f\|_{W^*} \to 0,\]
		so $\tilde w_k-w_k\to 0$.
		By weak convergence $\tilde w_k \rightharpoonup \bar w$ it also holds $w_k \rightharpoonup \bar w$ in $W$. 
	\end{proof}

	\section{Capacitary measures in an optimality system}\label{sec:application}
	
	Next, we use capacitary measures to derive a more detailed  optimality condition for a minimization problem. To be more specific, we want to find a decomposition $\bar\lambda = \bar w\bar\mu$ of a multiplier $\bar\lambda$ in the optimality condition of the minimization problem considered in \cite{sparse} for a solution $\bar w$ and a capacitary measure $\bar \mu$.
	This can be done in two ways:  Firstly, by using the compactness result from the previous section, and secondly, by defining the capacitary measure involved in the new optimality condition directly with the already known multiplier. 
	This second approach is similar to the one in \cite[Theorem 14]{137convOc} for the characterization of a subdifferential. 
	
	We consider the problem 
	\begin{equation}\label{eq:sparseOptProb} \underset{w\in W}{\min}\,\, F(w) + \frac{\alpha}{2} \|w\|_W^2+\beta \int_{\Omega} |w|^p \dd x  \end{equation}
	for $p\in[0,1)$ and $\alpha, \beta>0$. For $p\in (0,1)$, this problem was investigated in \cite{sparse}. Note that the solutions of this problem depend on the choice of the norm of $W$. This is why we considered several norms as described in \eqref{eq:equiv_spaces}.\\
	In this section, the following assumptions are supposed to be satisfied.
	
	\begin{assumption}\label{ass:f}
		The function $F: W\to\R$ is weakly lower semicontinuous and bounded from below by an affine function, i.e. there are $g\in W^*$, $c\in \R$ such that $F(w) \geq g(w) + c$ for all $w\in W$. Furthermore, $F\colon W\to\R$ is continuously Fréchet differentiable.
	\end{assumption}
	From now on we also choose $s\in (0,1)$ with $s \neq \frac{1}{2}$, since for $s=\frac12$ in \cite{sparse} the Lions-Magenes space $H_{00}^{\frac{1}{2}}(\Omega)$ was considered instead of the space $H_0^{\frac{1}{2}}(\Omega)$ we use here.
	
	\subsection{Case $p\in (0,1)$}\label{sec:pIn01}
	We first consider the case $p\in (0,1)$.
	Let $\bar w $ be a local solution of problem \eqref{eq:sparseOptProb} and let $\bar \lambda \coloneqq \frac{1}{\beta}[-F'(\bar w)-\alpha(\bar w, \cdot)_W] \in W^*$. Then the following optimality condition is satisfied by \cite[Theorem 5.7]{sparse}:
	\begin{align} \label{eq:ocMinprobSparse}
		\alpha(\bar w,v)_W+\beta\braket{\bar\lambda,v}_W &= -\braket{F'(\bar w),v}_W \qquad \forall v\in W, \\ \label{eq:ocMinProbSparseLambda}
		\braket{\bar\lambda,\bar w}_W &= p \io |\bar w|^p \dd x.
	\end{align}
	This optimality condition is obtained in \cite{sparse} by passing to the limit in the optimality condition of a smoothed auxiliary problem. In that auxiliary problem, the non-smooth and non-convex $L^p$-pseudo norm is approximated by  \begin{align*}
		G_{\epsilon}(w)\coloneqq \int_{\Omega} \psi_{\epsilon}(|w|^2) \dd x,
	\end{align*} 
	where \begin{equation*}
		\psi_{\epsilon}(t) \coloneqq \begin{cases}
			\frac{p}{2}\frac{t}{\epsilon^{2-p}}+(1-\frac{p}{2})\epsilon^p & \textrm{if } t \in [0,\epsilon^2), \\ t^{p/2} & \textrm{if } t \geq \epsilon^2,
		\end{cases}
	\end{equation*}
	with derivative \begin{align*}
		\psi_{\epsilon}'(t)=\frac{p}{2}\min(\epsilon^{p-2},t^{\frac{p-2}{2}}).
	\end{align*}
	For a local solution $\bar w$ of problem \eqref{eq:sparseOptProb}, the auxiliary problem reads
	\begin{equation}\label{eq:auxprob}
		\min_{w\in W}\,\,F(w)+\frac{\alpha}{2}\|w\|_W^2 + \beta G_{\epsilon}(w)+\frac12 \|w-\bar w\|_{\Ltwo}^2 \qquad \text{s.t. } \|w-\bar w\|_W \le \rho
	\end{equation}
	and a local minimum $w_k$ for some $\epsilon_k>0$ satisfies the necessary optimality condition 
	\begin{equation}\label{eq:auxOc}\alpha (w_k,v)_W + \beta \braket{\lambda_k,v}_W +(w_k-\bar w,v)_{\Ltwo}=-\braket{F'(w_k),v}_W \qquad \forall v\in W\end{equation}
	with $\lambda_k\coloneqq G_{\epsilon_k}'(w_k)$ given by \[\braket{\lambda_k,v}_W = \io 2 w_k \psi_{\epsilon_k}'(w_k^2) v \dd x = \io p w_k\min(\epsilon_k^{p-2},|w_k|^{p-2})  v \dd x.\]
	
	By this definition, we can also write $\lambda_k=w_k \mu_k$ for 
	\begin{equation}\label{eq:defMuk}\mu_k \coloneqq p\min(\epsilon_k^{p-2},|w_k|^{p-2}).\end{equation}
	Since $0\le \mu_k \in L^{\infty}(\Omega)$ for all $k \in \N$, it can be considered as an element of $W^*$ with $\braket{\mu_k,v}_W =\int_{\Omega} \mu_k v \dd x$ and also as a non-negative Radon measure by Lemma \ref{lm:etaIsRadon}.
	Therefore, $\mu_k$ is a capacitary measure by Theorem \ref{tm:radonInCap}.  Hence, one can rewrite the optimality condition \eqref{eq:auxOc} of the auxiliary problem as \begin{align}\label{eq:ocAuxWithCap}
		\alpha (w_k,v)_W + \beta \int_{\Omega} w_k  v \dd \mu_k = -\braket{F'(w_k), v}_W - (w_k - \Bar{w},v)_{\Ltwo} \quad \forall v\in W.
	\end{align}
	
	In \cite[Lemma 5.1]{sparse}, $w_k\to \bar w$ in $W$ was proven. \\
	Now we can prove the following theorem.
	\begin{theorem}\label{tm:firstMu}
		Let $\bar w$ be a local solution of \eqref{eq:sparseOptProb} and let $\Bar{\lambda}$ be the multiplier from \eqref{eq:ocMinprobSparse}. Then the multiplier $\bar\lambda$ can be decomposed as $\Bar{\lambda} = \Bar{w} \Bar{\mu}^z$ on $(W\cap L_{\bar\mu^z}^2(\Omega))^*$, i.e.
		\begin{equation}
			\label{eq:ocWithMeasure} \alpha(\bar w,v)_W + \beta \int_{\Omega} \Bar{w} v \dd \bar{\mu}^z =  -\braket{F'(\Bar{w}), v}_W \qquad \forall v\in W\cap L_{\bar\mu^z}^2(\Omega),
		\end{equation}
		where $\bar\mu^z$ is the $\gamma$-limit of the sequence $(\mu_k)$ given by \eqref{eq:defMuk}. \\
		Furthermore,
		\[
		\io \bar w^2 \dd \bar \mu^z = p \io |\bar w|^p \dd x.
		\]\\
		Let $\bar z$ solve \eqref{eq:relDirProbZ} with measure $\bar\mu^z$. Then the measure $\bar\mu^z$ can be written as
		\[ \bar\mu^z(B)= \begin{cases}
			\int_B \frac{\dd\eta}{\bar z} & \text{ if } \caps(B\cap\{\bar z=0\})=0, \\ \infty & \text{ if }\caps(B\cap\{\bar z=0\})>0
		\end{cases}\] with $\eta = \frac{1}{\beta}\left( 1- \alpha(\bar z, \cdot)_W\right)$. 
	\end{theorem}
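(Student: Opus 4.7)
The plan is to extract a $\gamma$-convergent subsequence from $(\mu_k)$, pass to the limit in the auxiliary optimality condition \eqref{eq:ocAuxWithCap}, and then invoke Proposition \ref{prop:charK} for the explicit representation of $\bar\mu^z$. By Theorem \ref{tm:MCompact}, a subsequence (not relabelled) satisfies $\mu_k \gammaTo \bar\mu^z$ for some $\bar\mu^z\in\M$. The strong convergence $w_k\to \bar w$ in $W$ from \cite[Lemma 5.1]{sparse}, the continuous differentiability of $F$, and the compact embedding $W\hookrightarrow L^2(\Omega)$ together yield $-F'(w_k)-(w_k-\bar w)\to -F'(\bar w)$ in $W^*$.

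To pass to the limit in the nonlinear term of \eqref{eq:ocAuxWithCap}, I would imitate the proof of Lemma \ref{lm:equivZkMuk}: testing \eqref{eq:ocAuxWithCap} with $z_k\varphi$ and the canonical problem \eqref{eq:relDirProbZk} with $\beta w_k\varphi$ for $\varphi\in\Cinf$, the $\mu_k$-integrals cancel upon subtraction, and Lemma \ref{lm:auxConvWzphi} combined with strong convergence of $(w_k)$ in $W$ and $L^2(\Omega)$ enables passage to the limit. Using \eqref{eq:relDirProbZ} to re-express the resulting identity then yields $\alpha(\bar w,v)_W + \beta\io \bar w v\,\dd\bar\mu^z = -\braket{F'(\bar w),v}_W$ for every $v$ of the form $\bar z\varphi$ with $\varphi\in\Cinf$. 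To extend this to all $v\in W\cap L_{\bar\mu^z}^2(\Omega)$ via the density from Lemma \ref{lm:wPhiDense}, I need $\bar w\in L_{\bar\mu^z}^2(\Omega)$. This follows from the pointwise bound $w_k^2\min(\epsilon_k^{p-2},|w_k|^{p-2})\le |w_k|^p$, which implies $\io w_k^2\,\dd\mu_k\le p\io |w_k|^p\,\dd x$, together with the $\Gamma$-liminf inequality for $F_{\mu_k}$ (equivalent to $\gamma$-convergence of $\mu_k$); since $w_k\to\bar w$ strongly in $L^2(\Omega)$ and $\|w_k\|_W\to\|\bar w\|_W$, one deduces $\io \bar w^2\,\dd\bar\mu^z \le \liminf \io w_k^2\,\dd\mu_k \le p\io|\bar w|^p\,\dd x<\infty$.

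The $L^p$-identity is then obtained by testing \eqref{eq:ocWithMeasure} with $\bar w$ and comparing the resulting identity $\alpha\|\bar w\|_W^2+\beta\io \bar w^2\,\dd\bar\mu^z = -\braket{F'(\bar w),\bar w}_W$ with the scalar relation $\alpha\|\bar w\|_W^2+\beta p\io|\bar w|^p\,\dd x = -\braket{F'(\bar w),\bar w}_W$ deduced from \eqref{eq:ocMinprobSparse} together with \eqref{eq:ocMinProbSparseLambda}. For the explicit representation of $\bar\mu^z$, one rearranges the equation satisfied by $\bar z$ (with the appropriate $\alpha$-$\beta$ scaling) as $\io \bar z v\,\dd\bar\mu^z = \braket{\eta,v}_W$ with $\eta = \tfrac{1}{\beta}(1-\alpha(\bar z,\cdot)_W)\in W^*$; Proposition \ref{prop:charK} applied with this $\eta$ then gives the asserted formula. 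The main obstacle I anticipate is the verification that the limit of the nonlinear energy $\io w_k^2\,\dd\mu_k$ coincides with both $\io \bar w^2\,\dd\bar\mu^z$ and $p\io|\bar w|^p\,\dd x$: the $\liminf$ inequality from $\Gamma$-convergence gives only one side, and matching the limsup requires a careful interplay of the optimality conditions at both the auxiliary and the limit level.
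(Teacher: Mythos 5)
Your overall strategy coincides with the paper's skeleton -- extract a $\gamma$-convergent subsequence via Theorem \ref{tm:MCompact}, pass to the limit in \eqref{eq:ocAuxWithCap}, compare the limit equation with \eqref{eq:ocMinprobSparse}--\eqref{eq:ocMinProbSparseLambda}, and use Proposition \ref{prop:charK} for the representation via $\bar z$ -- but you execute the limit passage by a genuinely different mechanism. The paper notes that $w_k$ solves a relaxed Dirichlet problem with (a multiple of) $\mu_k$ and right-hand sides $-F'(w_k)-(w_k-\bar w,\cdot)_{\Ltwo}\to -F'(\bar w)$ in $W^*$, and simply invokes Lemma \ref{lm:gammaConvVarRHSHs}, which delivers in one stroke both $\bar w\in W\cap L_{\bar\mu^z}^2(\Omega)$ and \eqref{eq:ocWithMeasure}. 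You instead re-run the cancellation argument underlying Lemma \ref{lm:equivZkMuk} (test \eqref{eq:ocAuxWithCap} with $z_k\varphi$ and \eqref{eq:relDirProbZk} with $\beta w_k\varphi$, subtract, pass to the limit, re-express via \eqref{eq:relDirProbZ}, then use the density of $\{\bar z\varphi\}$ from Lemma \ref{lm:wPhiDense}), and you obtain $\bar w\in L_{\bar\mu^z}^2(\Omega)$ separately from the pointwise bound $w_k^2\min(\epsilon_k^{p-2},|w_k|^{p-2})\le|w_k|^p$ combined with the $\Gamma$-liminf inequality (which the paper asserts but does not prove). This route works, keeps the constants $\alpha,\beta$ visible throughout, and gives the one-sided estimate $\io\bar w^2\dd\bar\mu^z\le p\io|\bar w|^p\dd x$ for free; the price is extra technical care that the paper's shortcut avoids. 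Two such points: Lemma \ref{lm:auxConvWzphi} does not apply verbatim, since your difference carries the unequal coefficients $\alpha\neq\beta$ and the lemma assumes $w_k\in\Linf$, which is not known for minimizers of \eqref{eq:auxprob}; but since $w_k\to\bar w$ strongly in $W$, the terms $(w_k,z_k\varphi)_W$ and $(z_k,w_k\varphi)_W$ pass to the limit by elementary strong--weak pairing, so argue that way instead (you also need that multiplication by a fixed $\varphi\in\Cinf$ is bounded on $W$, which is not literally Lemma \ref{lm:prodOfWFuns} but follows from estimates as in Lemma \ref{lm:phiFracInCinf}). Finally, your closing worry is unnecessary: the identity $\io\bar w^2\dd\bar\mu^z= p\io|\bar w|^p\dd x$ requires no limsup matching of the energies $\io w_k^2\dd\mu_k$ -- exactly as you describe a paragraph earlier, it follows by testing \eqref{eq:ocWithMeasure} with $v=\bar w$ and comparing with \eqref{eq:ocMinprobSparse} and \eqref{eq:ocMinProbSparseLambda}, which is precisely the paper's argument.
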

	By $\Bar{\lambda} = \Bar{w} \Bar{\mu}^z$ on $(W\cap L_{\bar\mu}^2(\Omega))^*$ we mean 
	\[
	\braket{\bar \lambda, v}_W = \io \bar w v \dd \bar \mu \qquad \forall v \in W\cap L_{\bar\mu}^2(\Omega).
	\]
	\begin{proof}
		Let $w_k$ solve \eqref{eq:auxOc} as described above. By \cite[Lemma 5.1]{sparse}, we already know that $w_k \to \Bar{w}$, where $\Bar{w}$ solves  
		\begin{equation} \label{eq:ocWithLambdaInProof}\alpha(\Bar{w},v)_W + \beta \braket{\bar \lambda, v}_W  =  -\braket{F'(\Bar{w}), v}_W \qquad \forall v\in W. \end{equation} 
		For $(\mu_k)$ given by \eqref{eq:defMuk}, Theorem \ref{tm:MCompact} yields the existence of a measure $\bar\mu^z\in\M$ such that w.l.o.g. $\mu_k\gammaTo \bar\mu^z$. 
		By definition of $\gamma$-convergence and Lemma \ref{lm:gammaConvVarRHSHs}, one can pass to the limit in equation \eqref{eq:ocAuxWithCap} to obtain $\bar w\in W\cap L_{\bar\mu^z}^2(\Omega)$ and that $\Bar{w}$ solves \eqref{eq:ocWithMeasure}.
		Comparing \eqref{eq:ocWithLambdaInProof} and \eqref{eq:ocWithMeasure}, we obtain $\Bar{\lambda}=\Bar{w}\Bar{\mu}^z$ in $(W\cap L_{\bar\mu^z}^2(\Omega))^*$. This yields  $	\io \bar w^2 \dd \bar \mu^z = p \io |\bar w|^p \dd x$ using \eqref{eq:ocMinProbSparseLambda}.
		By Proposition \ref{prop:charK} the measure $\bar\mu^z$ can be characterized by $\bar z$ as stated above, incorporating the constants $\alpha, \beta$.
	\end{proof}
	
	Note that $\bar w \in L_{\bar\mu^z}^2(\Omega)$ shows $\{\bar z=0\} \subseteq \{ \bar w = 0\}$. \\
	In Theorem \ref{tm:firstMu}, the measure $\bar\mu^z$ is written in terms of a function $\bar z$. The aim is now to find a representation of $\bar\mu$ with $\bar\lambda=\bar w\bar\mu$ where $\bar\mu$ is characterized by $\bar w$ directly.
	
	In this case, the measure $\bar\mu$ is defined directly via $\bar \lambda$ without using $\gamma$-convergence. However, we require some additional assumptions.
	Note that $\bar\lambda \in C_0(\Omega)^*$ according to \cite[Theorem 5.10]{sparse}. 
	
	\begin{theorem}\label{tm:secondMu}
		Let $\bar w$ be a solution of \eqref{eq:sparseOptProb} solving the system  \eqref{eq:ocMinprobSparse}-\eqref{eq:ocMinProbSparseLambda}. Assume that $\bar\lambda \ge 0$ and $\bar w \ge 0$.
		Then it holds 
		\begin{equation}\label{eq:ocMu2}
			\alpha (\bar w,v)_W +\beta  \io \bar wv\dd\bar\mu^w =-\braket{F'(\bar w),v}_W 
			\qquad \forall v\in W\cap L^2_{\bar\mu^{\bar w}}(\Omega),
		\end{equation}
		where $\bar\mu^w\in\M$ can be expressed with $\bar w$ and $\bar\lambda$ via
		\[ \bar\mu^w(B)= \begin{cases}
			\int_B \frac{\dd\bar\lambda}{\bar w} & \text{ if } \caps(B\cap\{\bar w=0\})=0, \\ \infty & \text{ if }\caps(B\cap\{\bar w=0\})>0.
		\end{cases}\]
		Here, $\bar\lambda\in W^*$ is the measure from the optimality condition \eqref{eq:ocMinprobSparse}, so $\bar\lambda=\frac{1}{\beta}(-F'(\bar w)-\alpha(\bar w, \cdot)_W)$. 
		It holds $\Bar{\lambda} = \Bar{w} \Bar{\mu}^w$ on $(W\cap L_{\bar\mu^w}^2(\Omega))^*$ and
		\[
		\io \bar w^2 \dd \bar \mu^w = p \io |\bar w|^p \dd x.
		\] 
	\end{theorem}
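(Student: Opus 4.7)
My plan is to mirror the construction in Proposition \ref{prop:charK}, with the pair $(\bar w, \bar\lambda)$ here playing the role of $(\bar z, \eta)$ there. The hypotheses $\bar w \ge 0$ and $\bar\lambda \ge 0$ in $W^*$ supply exactly the sign and duality properties used in that proposition, so no relaxed Dirichlet equation for $\bar w$ needs to be invoked; the construction goes through whenever one has a non-negative function in $W$ and a non-negative measure in $W^*$ in the appropriate relationship.

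First I would register that $\bar\lambda$ is itself a capacitary measure: Lemma \ref{lm:etaIsRadon} identifies it with a non-negative Radon measure, and Theorem \ref{tm:radonInCap} then places it in $\M$ with the capacity estimate $\bar\lambda(B) \le C\sqrt{\caps(B)}$. Using this, I would verify that $\bar\mu^w$, as defined in the statement, lies in $\M$: property (i) of Definition \ref{def:capMeasure} follows because $\caps(B) = 0$ forces $\bar\lambda(B) = 0$ and hence $\bar\mu^w(B) = 0$; property (ii), outer regularity along q.o.\@ sets, follows by the same argument as in Proposition \ref{prop:charK}, decomposing $B \cap \{\bar w > 0\}$ into the sets $B \cap \{1/n < \bar w \le 1/(n-1)\}$ on which $\bar\lambda$ is finite, outer approximating each piece by open sets, and intersecting with the q.o.\@ set $\{\bar w > 1/n\}$ to stay in the quasi-open class.

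The core step is the decomposition $\bar\lambda = \bar w\,\bar\mu^w$ on $(W \cap L^2_{\bar\mu^w}(\Omega))^*$. I would first compute
\[
\io \bar w^2 \dd \bar\mu^w \;=\; \int_{\{\bar w > 0\}} \bar w \dd \bar\lambda \;=\; \io \bar w \dd \bar\lambda \;=\; \braket{\bar\lambda, \bar w}_W \;=\; p \io |\bar w|^p \dd x,
\]
using the definition of $\bar\mu^w$ in the first equality, $\bar w = 0$ on $\{\bar w = 0\}$ in the second, the identification of the duality pairing with the integral against the q.c.\@ representative of $\bar w$ in the third, and \eqref{eq:ocMinProbSparseLambda} in the fourth. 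This establishes simultaneously that $\bar w \in L^2_{\bar\mu^w}(\Omega)$ and the asserted identity $\io \bar w^2 \dd \bar\mu^w = p \io |\bar w|^p \dd x$. For a general $v \in W \cap L^2_{\bar\mu^w}(\Omega)$, the prescription that $\bar\mu^w = \infty$ on subsets of $\{\bar w = 0\}$ of positive capacity forces $v = 0$ q.e.\@ on $\{\bar w = 0\}$, and since $\bar\lambda$ does not charge sets of zero capacity the same manipulation gives $\io \bar w v \dd \bar\mu^w = \braket{\bar\lambda, v}_W$. Inserting this identity into \eqref{eq:ocMinprobSparse} yields \eqref{eq:ocMu2}.

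I expect the main technical obstacle to be the rigorous identification $\braket{\bar\lambda, v}_W = \io v \dd \bar\lambda$ for $v \in W$, which must be justified by approximation with $\Cinf$ functions together with the capacity estimate of Theorem \ref{tm:radonInCap}, always working with the q.c.\@ representative of $v$ so that restriction to $\{\bar w > 0\}$ and $\{\bar w = 0\}$ is meaningful. Once this measure-theoretic bookkeeping and the outer regularity argument in the second step are in place, the remainder of the proof is a direct transcription of Proposition \ref{prop:charK}.
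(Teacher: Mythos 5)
Your proposal is correct and follows essentially the same route as the paper: establish $\bar\mu^w\in\M$ via Lemma \ref{lm:etaIsRadon}, Theorem \ref{tm:radonInCap} and the outer-regularity argument of Proposition \ref{prop:charK}, then use $v=0$ q.e.\@ on $\{\bar w=0\}$ for $v\in W\cap L^2_{\bar\mu^w}(\Omega)$ to convert $\io\bar w v\dd\bar\mu^w$ into $\braket{\bar\lambda,v}_W$ and insert this into \eqref{eq:ocMinprobSparse}. The only cosmetic difference is that you obtain $\bar w\in L^2_{\bar\mu^w}(\Omega)$ directly from \eqref{eq:ocMinProbSparseLambda}, while the paper uses the defining formula $\bar\lambda=\frac{1}{\beta}(-F'(\bar w)-\alpha(\bar w,\cdot)_W)$; these are equivalent, and your remark about identifying $\braket{\bar\lambda,v}_W$ with $\io v\dd\bar\lambda$ is a point the paper also uses implicitly.
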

	\begin{proof}
		Let $\bar w$ and $\bar\mu^w$ as defined above. We want to show $\bar\mu^w\in\M$. As $0\le\bar\lambda \in W^*$, for every Borel set $B$ with $\caps(B)=0$ we obtain $\bar\lambda^w(B)=0$ using Lemma \ref{lm:etaIsRadon} and Theorem \ref{tm:radonInCap}. This yields  $\bar\mu^w(B)=0$ for Borel sets with $\caps(B)=0$ by definition of $\bar\mu^w$.
		For showing 
		\[\label{eq:iiForMWithF}\bar\mu^w(B)=\inf\{\bar\mu^w(A): A \text{ q.o., } B\subseteq A\}\]
		for every Borel set $B\subseteq \Omega$ with $\bar\mu^w(B)<\infty$, one can proceed completely analogously to the proof of Proposition \ref{prop:charK} with $\bar w$ instead of $z$.
		Next, we show that $\bar w$ solves \eqref{eq:ocMu2} with measure $\bar\mu^w$. By definition of $\bar\mu^w$ it holds
		\[\io \bar w^2 \dd \bar\mu^w= \int_{\{\bar w>0\}} \bar w^2\dd\bar\mu^w= \int_{\{\bar w>0\}} \bar w\dd\bar\lambda = \io \bar w \dd\bar\lambda = \frac{1}{\beta}( \braket{-F'(\bar w),\bar w}_W-\alpha (\bar w,\bar w)_W)<\infty,\] %use f\ge 0 -> w\ge 0
		so $\bar w\in L_{\bar\mu^w}^2(\Omega)$. Furthermore, it holds for all $v\in L^2_{\bar\mu^w}(\Omega)$
		\begin{align*}\alpha (\bar w,v)_W+\beta \io \bar wv\dd\bar\mu^w &=\alpha (\bar w,v)_W+\beta \int_{\{\bar w>0\}}\bar wv\dd\bar\mu^w =\alpha (\bar w,v)_W + \beta \int_{\{\bar w>0\}}v\dd\bar\lambda \\&=\alpha(\bar w,v)_W+\beta \io v\dd\bar\lambda = \braket{-F'(\bar w),v}_W.
		\end{align*}
		Here, the third equality follows from $v=0$ q.e.\@ on $\{\bar w=0\}$ since $v\in L^2_{\bar\mu^w}(\Omega)$. 
		It thus holds $\bar \lambda = \bar w \bar\mu^w$ on $(W\cap L_{\bar\mu^w}^2(\Omega))^*$ and the last statement follows from \eqref{eq:ocMinProbSparseLambda}.
	\end{proof}
	Hence, there are two ways to obtain a measure $\bar \mu$ with $\bar\lambda = \bar w\bar \mu$: as the $\gamma$-limit of the sequence $(\mu_k)$ arising from the smoothing scheme, or by directly defining $\bar\mu$ via the multiplier $\bar\lambda$. However, the second representation was only shown for $\bar\lambda\ge 0$ and $\bar w\ge 0$. Thus, quite strong assumptions are used in this case. \\
	The next Lemma provides a sufficient condition for the assumptions $\bar\lambda \ge 0$ and $\bar w\ge 0$ in the previous theorem to be satisfied.
	\begin{lemma}
		Let  $F$ satisfy 
		\[w_-\le v_- \Rightarrow F(v) \le F(w)\]
		for $v,w \in W$ with $v=w$ on $\{v\ge 0\} = \{w\ge 0\}$.
		Let $\bar w$ be a local minimum of \eqref{eq:sparseOptProb} and let $w_k$ minimize \eqref{eq:auxprob} for some $\epsilon>0$. Then $\bar w\ge 0$ and $w_k\ge 0$ for $k$ large enough.
		If $w_k\ge 0$ for all $k$ large enough, then $\bar\lambda\ge 0$.
	\end{lemma}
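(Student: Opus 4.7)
The plan is to use a comparison argument. For a candidate minimum $w$ with $w_-\ne 0$ (using the paper's convention $w_-=\min(w,0)\le 0$), introduce the interpolant $w_t:=w_++tw_-$ for $t\in[0,1]$. For $t\in(0,1]$ one directly checks that $\{w_t\ge 0\}=\{w\ge 0\}$, that $w_t=w$ on this set, and that $(w_t)_-=tw_-\ge w_-$ pointwise, so the hypothesis on $F$ applies and yields $F(w_t)\le F(w)$. If every remaining term in the objective is non-increasing in $t$ and at least one is strictly decreasing when $w_-\ne 0$, then sending $t\nearrow 1$ (so that $w_t\to w$ in $W$) contradicts local minimality.

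The required monotonicity estimates for $t\in[0,1]$ are the following. First, $\|w_t\|_W^2\le \|w\|_W^2$, obtained by splitting the double integral defining the $H^s$-seminorm into the four regions cut out by $\{w\ge 0\}$ and $\{w<0\}$ and verifying the pointwise bound $|w_t(x)-w_t(y)|\le|w(x)-w(y)|$ in each (the same estimate handles the $\tilde H^s$ and spectral formulations since their kernels are non-negative, and the $L^2$-part follows from $|w_t|^2=w_+^2+t^2|w_-|^2$). Second, $\int_\Omega|w_t|^p\,\dd x=\int_\Omega w_+^p\,\dd x+t^p\int_\Omega|w_-|^p\,\dd x$, which is strictly below $\int_\Omega|w|^p\,\dd x$ for $t<1$ whenever $w_-\ne 0$ and $p>0$. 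Third, $G_{\epsilon}(w_t)\le G_{\epsilon}(w)$ follows from $|w_t|^2\le|w|^2$ and the monotonicity of $\psi_{\epsilon}$. Finally, once $\bar w\ge 0$ is available, $\|w_t-\bar w\|_{L^2}^2\le\|w-\bar w\|_{L^2}^2$ with strict inequality for $t<1$ when $w_-\ne 0$, since $t\mapsto (tw(x)-\bar w(x))^2$ is non-decreasing on $\{w<0\}$ whenever $\bar w(x)\ge 0$.

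For $\bar w$, the strict inequality in the $L^p$-term (valid since $p\in(0,1)$) already contradicts local minimality of \eqref{eq:sparseOptProb} unless $\bar w_-=0$. For $w_k$, the strict inequality is instead supplied by the proximity term $\tfrac12\|w-\bar w\|_{L^2}^2$ in the auxiliary objective \eqref{eq:auxprob} (using $\bar w\ge 0$), and feasibility of $(w_k)_t$ in the constraint $\|w-\bar w\|_W\le\rho$ for $t$ close to $1$ follows because $w_k\to\bar w$ in $W$ by \cite[Lemma 5.1]{sparse} makes the constraint inactive for $k$ large. Finally, if $w_k\ge 0$ for all $k$ large, then $\lambda_k=w_k\mu_k\ge 0$ by \eqref{eq:defMuk}, and passing to the limit in \eqref{eq:auxOc} with $w_k\to\bar w$ in $W$ gives $\lambda_k\to\bar\lambda$ in $W^*$, which preserves the sign and yields $\bar\lambda\ge 0$. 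The only technical subtlety is the pointwise seminorm bound, which is a short but case-by-case verification that has to be done in parallel for the three equivalent norms on $W$.
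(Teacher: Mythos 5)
Your proof is correct and follows essentially the same route as the paper: the paper perturbs a candidate minimizer to $\bar w_+ + (1-\delta)\bar w_-$ (your $w_t$ with $t=1-\delta$), uses the hypothesis on $F$, the $W$-norm estimate of Lemma \ref{lm:inequPosPart}-type, the strict decrease of the $L^p$-term (respectively the $L^2$-proximity/smoothed term for the auxiliary problem together with $\|w_k-\bar w\|_W<\rho$ for large $k$), and then passes to the limit in $\lambda_k=w_k\mu_k\ge 0$ to get $\bar\lambda\ge 0$. Your only additions are explicit case-by-case verifications the paper delegates to cited arguments, so no substantive difference.
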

	\begin{proof}	
		By assumption, $\bar w$ is a local solution of \eqref{eq:sparseOptProb}.
		Let now $\delta>0$ and $v\coloneq \bar w_+ + (1-\delta)\bar w_- \in W$. Then  it holds $\|v\|_W^2 \le \|\bar w\|_W^2$ by the same arguments as in Lemma \ref{lm:inequPosPart}. By the assumption on $F$ and $w_- \le v_-$ it holds $F(\bar w) \ge F(v)$. One can also easily observe that $ \io |v|^p \dd x \le  \io |\bar w|^p \dd x$ and  this inequality is strict for $v \neq \bar w$.
		As this holds for all $\delta>0$, local optimality of $\bar w$ yields $\bar w=v$ and thus $\bar w \ge 0$. \\
		To show $w_k\ge 0$, define $v_k \coloneq (w_{k})_+ + (1-\delta)(w_k)_- $. Then for $\bar w \ge 0$ it holds $\| v_k - \bar w\|_{\Ltwo} \le \| w_k - \bar w\|_{\Ltwo}$. For $k$ large enough, it holds $\|w_k-\bar w\|_W < \rho$, and thus also $\| v_k -\bar w\|_W \le \|w_k -\bar w\|_W + \delta \|(w_k)_-\|_W \le \rho$ for $\delta$ small enough.
		Together with the arguments above, this yields $w_k \ge 0$ for each solution $w_k$ of the auxiliary problem \eqref{eq:auxprob}.  
		By definition of $\lambda_k$ it therefore holds $\lambda_k\ge 0$ which yields $\bar\lambda\ge 0$ by passing to the limit $k\to\infty$.
	\end{proof}
	
	\begin{remark}
		The minimization problem considered in \cite{spatSparse} is a time-dependent version of problem \eqref{eq:sparseOptProb} with $\bar\lambda\ge 0$ and $\bar w \ge 0$.
	\end{remark}
	
	The aim is now to investigate when the two measures $\bar\mu^z$ and $\bar\mu^w$ from Theorems \ref{tm:firstMu} and \ref{tm:secondMu} coincide. 
	To do so, we extend some of the previous results to the case of a right-hand side $0\le f \in W\cap\Linf $ instead of $1$. 
	
	\begin{lemma}\label{lm:eqNDiffwithF}
		Let $\mu\in\M$, $w\in W\cap L_{\mu}^2$,  $0\le f \in W\cap\Linf $  and let $w_k\in  W\cap \Lmu$ be the solution of 
		\begin{equation}\label{eq:eqNDiffWithF}
			(w_k,v)_W+\int_{\Omega}w_k v \dd \mu + k \int f(w_k-w)v  \dd x = 0\qquad \forall v\in  W\cap \Lmu.
		\end{equation}
		Then $w_k \to \tilde w$ along a subsequence in $W$ and in $L_{\mu}^2(\Omega)$ with $\tilde w=w$ q.e.\@ on $\{f>0\}$.
	\end{lemma}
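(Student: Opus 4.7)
The plan is to adapt the proof of Lemma \ref{lm:eqNDiff} but account for the fact that the weight $f$ may vanish, so one only obtains identification of the limit on $\{f>0\}$. First, I would test \eqref{eq:eqNDiffWithF} with the admissible function $v = w_k - w \in W \cap \Lmu$. Writing $(w_k,w_k-w)_W = \|w_k-w\|_W^2 + (w,w_k-w)_W$ and analogously for the $\mu$-integral, this yields
\begin{equation*}
\|w_k-w\|_W^2 + \|w_k-w\|_{\Lmu}^2 + k\io f(w_k-w)^2\dd x = -(w,w_k-w)_W - \io w(w_k-w)\dd\mu.
\end{equation*}
By Cauchy--Schwarz applied to the right-hand side and absorbing, I obtain a uniform bound for $(w_k)$ in $W$ and in $\Lmu$, together with the crucial estimate that $k\io f(w_k-w)^2\dd x$ is bounded independently of $k$.

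Next, I extract a subsequence (not relabelled) with $w_k \rightharpoonup \tilde w$ in $W$ and in $\Lmu$; by the compact embedding $W\hookrightarrow\Ltwo$ from Proposition \ref{prop:propFrac}(i), $w_k \to \tilde w$ in $\Ltwo$ along a further subsequence. The uniform bound on $k\io f(w_k-w)^2\dd x$ forces $\io f(w_k-w)^2\dd x \to 0$, hence $\sqrt{f}(w_k-w)\to 0$ in $\Ltwo$. Combined with strong $\Ltwo$-convergence of $w_k$, this gives $\sqrt{f}(\tilde w - w) = 0$ a.e., so $\tilde w = w$ a.e.\@ on $\{f>0\}$. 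Since $f \in W$ is q.c., the set $\{f>0\}$ is quasi-open, and a q.c.\@ function that vanishes a.e.\@ on a q.o.\@ set vanishes q.e.\@ on it (by restricting to $\Omega\setminus O_\epsilon$ where $O_\epsilon$ handles both the q.o.\@ exceptional set and the q.c.\@ exceptional set and using that a continuous function vanishing a.e.\@ on an open set vanishes there identically); applied to $\tilde w - w$, this yields $\tilde w = w$ q.e.\@ on $\{f>0\}$.

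For the strong convergence I would test \eqref{eq:eqNDiffWithF} with $v = w_k - \tilde w \in W\cap \Lmu$ (admissible since $\tilde w \in W\cap \Lmu$ as a weak limit). Rewriting,
\begin{equation*}
\|w_k-\tilde w\|_W^2 + \|w_k-\tilde w\|_{\Lmu}^2 + k\io f(w_k-w)(w_k-\tilde w)\dd x = -(\tilde w,w_k-\tilde w)_W - \io \tilde w(w_k-\tilde w)\dd\mu.
\end{equation*}
The key observation is that $\tilde w = w$ on $\{f>0\}$ and $f=0$ on $\{f=0\}$, so $f(w_k-w)(w_k-\tilde w) = f(w_k-w)^2 \ge 0$ a.e. Dropping this non-negative term and using weak convergence of $w_k$ to $\tilde w$ in $W$ and $\Lmu$ on the right-hand side gives $\|w_k-\tilde w\|_W^2 + \|w_k-\tilde w\|_{\Lmu}^2 \to 0$, which is the claimed strong convergence.

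The main obstacle is the passage from a.e.\@ equality to q.e.\@ equality on the quasi-open set $\{f>0\}$; this requires a careful use of the q.c.\@ representatives and the quasi-open approximation scheme already used in the paper, but it is a standard consequence of the framework in Section \ref{sec:cap}. Everything else is a bookkeeping adaptation of the energy argument used for Lemma \ref{lm:eqNDiff}.
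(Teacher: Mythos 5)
Your proof is correct and follows essentially the same route as the paper: test with $w_k-w$ to get the energy bound and the weighted $L^2$ decay $k\io f(w_k-w)^2\dd x \le C$, identify $\tilde w=w$ on $\{f>0\}$ after extracting a weakly convergent subsequence, and then test with $w_k-\tilde w$, dropping the term $k\io f(w_k-w)(w_k-\tilde w)\dd x = k\io f(w_k-w)^2\dd x\ge 0$ to conclude strong convergence in $W$ and $\Lmu$. In fact you spell out two points the paper leaves implicit (the sign of the $k$-term and the upgrade from a.e.\@ to q.e.\@ equality on the quasi-open set $\{f>0\}$, which is a standard fact about quasi-continuous representatives), so the argument matches the paper's proof.
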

	\begin{proof}
		Using the same steps as in the proof of Lemma \ref{lm:eqNDiff} yields
		\[ \frac1{2} \|w_k-w\|_W^2 + \frac{1}{2} \|  w_k-w\|_{L_{\mu}^2(\Omega)}^2 + k \io f(w_k-w)^2 \dd  x \le  \frac1{2} \|w\|_W^2 + \frac{1}{2} \|  w\|_{L_{\mu}^2(\Omega)}^2.\]
		Thus, we obtain $w_k\to w$ in $L^2(\{f>0\})$ and $w_k\rightharpoonup \tilde w$ along a subsequence in $W$ and $L_{\mu}^2(\Omega)$ for some $\tilde w$ with $w=\tilde w$ q.e. on $\{f>0\}$.
		
		Next, we prove $w_k\to \tilde w$ along a subsequence in $W$. Analogous to the procedure to obtain the first equation of this proof, we test \eqref{eq:eqNDiffWithF} with $w_k-\tilde w$ and add some terms to obtain
		\[
		\|w_k-\tilde w\|_W^2 + \int_{\Omega}  (w_k-\tilde w)^2 \dd \mu + k \io f(w_k-w)(w_k-\tilde w)  \dd x = -(\tilde w,w_k-\tilde w)_W - \int_{\Omega}\tilde w(w_k-\tilde w)\dd \mu.
		\]
		Using the already obtained convergence statements, this shows strong convergence $w_k \to \tilde w$ along a subsequence in  $W$ and $\Lmu$.
	\end{proof}
	Here, $\tilde w=w$ q.e.\@ on $\{f>0\}$ is well-defined since we work with the unique q.c.\@ representative of $f\in W$. 
	
	\begin{lemma}
		\label{lm:muInfwithF}
		Let $0\le f\in W\cap\Linf $ %\cap\Linf$
		, $\mu \in\M$ and $w\in W\cap L_{\mu}^2(\Omega)$ be the solution of 
		\[ (w,v)_W + \int_{\Omega} wv \dd \mu = \int_{\Omega} fv \dd x \qquad \forall v \in W\cap L_{\mu}^2(\Omega).\]
		Then $\mu(B)=\infty$ for all Borel sets $B\subseteq \Omega$ with $\caps(B\cap\{w=0\}\cap\{f>0\})>0$.
	\end{lemma}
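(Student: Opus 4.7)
The plan is to adapt the proof of Lemma~\ref{lm:muInf} by letting $w$ itself play the role of the distinguished solution $z$ in that argument, and by replacing every occurrence of $n\,\dd x$ in the approximating problems with the weighted measure $nf\,\dd x$. The role previously played by Lemma~\ref{lm:eqNDiff} is taken over by its weighted analog, Lemma~\ref{lm:eqNDiffwithF}, and the comparison principle of Lemma~\ref{lm:monOfSolsRelDir} supplies the localization of the zero set of $w$.

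First I would prove the following intermediate claim: every $w_0\in W\cap\Lmu$ with $0\le w_0\le 1$ q.e.\@ in $\Omega$ satisfies $w_0=0$ q.e.\@ on $\{w=0\}\cap\{f>0\}$. For each $n\in\N$ let $w_n\in W\cap\Lmu$ solve
\begin{equation*}
(w_n,v)_W+\io w_n v\,\dd\mu + n\io f w_n v\,\dd x = n\io fw_0 v\,\dd x \qquad \forall\, v\in W\cap\Lmu.
\end{equation*}
The rescaling $u_n\coloneqq w_n/n$ satisfies a relaxed Dirichlet problem with capacitary measure $\mu+nf\,\dd x\in\M$ and right-hand side $fw_0$. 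Since $0\le fw_0\le f$ and $\mu\le \mu+nf\,\dd x$, Lemma~\ref{lm:monOfSolsRelDir} applied to $u_n$ and $w$ yields $0\le u_n\le w$ q.e., whence $w_n=0$ q.e.\@ on $\{w=0\}$. At the same time Lemma~\ref{lm:eqNDiffwithF} (applied with $w_0$ in place of the function called $w$ there) supplies a subsequence along which $w_n\to \tilde w_0$ in $W$ with $\tilde w_0=w_0$ q.e.\@ on $\{f>0\}$; extracting a further q.e.\@ convergent subsequence then gives $\tilde w_0=0$ q.e.\@ on $\{w=0\}$, and therefore $w_0=0$ q.e.\@ on $\{w=0\}\cap\{f>0\}$.

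The second step mirrors the corresponding part of Lemma~\ref{lm:muInf}. For any quasi-open $A\subseteq\Omega$ with $\mu(A)<\infty$, Lemma~\ref{lm:approxSetPw} supplies an increasing sequence $(z_n)\subset W$ with $0\le z_n\le 1$ and $z_n\to 1_A$ q.e.\@ in $\Omega$; finiteness of $\mu(A)$ places each $z_n$ in $\Lmu$, so applying the intermediate claim with $w_0=z_n$ and passing to the limit $n\to\infty$ yields $\caps(A\cap\{w=0\}\cap\{f>0\})=0$. For a general Borel set $B\subseteq\Omega$ with $\caps(B\cap\{w=0\}\cap\{f>0\})>0$, condition (ii) of Definition~\ref{def:capMeasure} shows that every quasi-open superset $A\supseteq B$ inherits $\caps(A\cap\{w=0\}\cap\{f>0\})>0$ and hence satisfies $\mu(A)=\infty$; taking the infimum over such $A$ gives $\mu(B)=\infty$.

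The main obstacle is that the comparison principle cannot be applied directly to $w_n$ and a scalar multiple of $w$; the rescaling $u_n=w_n/n$ is essential, since only then is the inhomogeneity $fw_0$ dominated by $f$. A second delicate point is that Lemma~\ref{lm:eqNDiffwithF} only identifies $\tilde w_0$ with $w_0$ on $\{f>0\}$, which is exactly why the conclusion of the lemma must be restricted to that set—outside $\{f>0\}$ the penalty term $k\int f(w_k-w_0)v\,\dd x$ is inert and carries no information about the pointwise values of the limit.
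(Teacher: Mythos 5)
Your proposal is correct and follows essentially the same route as the paper's proof: the weighted penalization $n\int_\Omega f(w_n-w_0)v\,\dd x$, the rescaling $u_n=w_n/n$ combined with the comparison principle of Lemma~\ref{lm:monOfSolsRelDir} (with $f_1=fw_0\le f=f_2$), the identification of the limit on $\{f>0\}$ via Lemma~\ref{lm:eqNDiffwithF}, and then the second step of Lemma~\ref{lm:muInf} carried out with the set $\{w=0\}\cap\{f>0\}$. You merely spell out details the paper leaves implicit (and correctly write the comparison measure as $\mu+nf\,\dd x$, fixing a small slip in the paper's statement of the comparison data).
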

	\begin{proof} The proof is very similar to the one of Lemma \ref{lm:muInf}. For $z\in W\cap L_{\mu}^2(\Omega)$ with $0\le z\le 1$ q.e.\@ in $\Omega$ and  $z_n\in  W\cap L_{\mu}^2(\Omega)$ being the solution of
		\[(z_n,v)_W + \int_{\Omega} z_n v \dd\mu + n\int_{\Omega} f z_n v \dd x=n\io f zv\dd x \] for all $n\in\N$,
		the comparison principle from Lemma \ref{lm:monOfSolsRelDir}  applied with $f_1=fz, f_2=f, \mu_1=\mu+n\dd x$ and $\mu_2=\mu$ yields $0\le\frac{1}{n}z_n\le w$. Thus, $z_n=0$ q.e.\@ in $\{w=0\}$ and therefore also $z=0$ q.e.\@ in $\{w=0\}\cap \{f>0\}$ by Lemma \ref{lm:eqNDiffwithF}. \\ 
		Following the proof of Lemma \ref{lm:muInf} but working with the set $\{w=0\}\cap \{f>0\}$ shows $\mu(B)=\infty$ on Borel sets $B$ with $\caps(B\cap \{w=0\}\cap \{f>0\})>0$.
	\end{proof}
	
	\begin{lemma}
		\label{lm:lambdaIsMuWithF}
		Let $\mu_1,\mu_2\in\M$, $0\le f\in W\cap\Linf$ 
		and let $w\in W\cap L_{\mu_1}^2(\Omega)\cap L_{\mu_2}^2(\Omega)$ such that
		\begin{align} \label{eq:lambdaIsMu1WithF}
			(w,v)_W+\io wv \dd \mu_1=\io f v \dd x\qquad \forall v\in W\cap L_{\mu_1}^2(\Omega) \\ \label{eq:lambdaIsMu2WithF}
			(w,v)_W+\io wv \dd \mu_2 =\io f v \dd x\qquad \forall v\in W\cap L_{\mu_2}^2(\Omega).
		\end{align} Then $\mu_1(B)=\mu_2(B)$ for all sets $B\subseteq \{w>0\}\cup \{f>0\}$.
	\end{lemma}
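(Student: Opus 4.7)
The plan is to mimic the proof of Lemma \ref{lm:lambdaIsMu} essentially verbatim, with the only new input being that we control the $\{w=0\}$ part using Lemma \ref{lm:muInfwithF} rather than Lemma \ref{lm:muInf}, and we restrict the conclusion to the set $\{w>0\}\cup\{f>0\}$ because that is where the new Lemma \ref{lm:muInfwithF} gives us information.

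First I would introduce the auxiliary measures $\mu_{i,0}(B)\coloneqq \int_B w\,\dd\mu_i$ and their truncations $\mu_{i,\epsilon}(B)\coloneqq \int_{B\cap\{w>\epsilon\}} w\,\dd\mu_i$ for $i=1,2$ and $\epsilon>0$. Since $w\in L^2_{\mu_i}(\Omega)$, the truncations are bounded measures, so showing $\mu_{1,\epsilon}=\mu_{2,\epsilon}$ on open sets $O\subseteq\Omega$ is enough to conclude $\mu_{1,0}=\mu_{2,0}$. For a fixed open $O$, the set $A_\epsilon\coloneqq O\cap\{w>\epsilon\}$ is quasi-open, so by Lemma \ref{lm:approxSetPw} there is an increasing, non-negative sequence $(z_n)\subset W$ with $z_n\to 1_{A_\epsilon}$ pointwise q.e.\ in $\Omega$; since $w>\epsilon$ q.e.\ on $A_\epsilon$ and $w\in L^2_{\mu_i}(\Omega)$, we have $\mu_i(A_\epsilon)<\infty$, hence $z_n\in L^2_{\mu_1}(\Omega)\cap L^2_{\mu_2}(\Omega)$.

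Testing \eqref{eq:lambdaIsMu1WithF} and \eqref{eq:lambdaIsMu2WithF} with $z_n$ and subtracting cancels both the $(w,z_n)_W$ and the $\int f z_n\,\dd x$ terms, leaving $\io w z_n\,\dd\mu_1=\io w z_n\,\dd\mu_2$. Passing to the limit by dominated convergence then yields $\mu_{1,\epsilon}(O)=\int_{A_\epsilon} w\,\dd\mu_1=\int_{A_\epsilon} w\,\dd\mu_2=\mu_{2,\epsilon}(O)$. Sending $\epsilon\downarrow 0$ gives $\mu_{1,0}=\mu_{2,0}$ on all Borel sets.

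For the final conclusion, let $B\subseteq \{w>0\}\cup\{f>0\}$ and split $B=B'\cup B''$ with $B'\coloneqq B\cap\{w>0\}$ and $B''\coloneqq B\cap\{w=0\}\subseteq\{f>0\}$. On $B'$ we recover the measures themselves from their weighted versions via $\mu_i(B')=\int_{B'}\tfrac{1}{w}\,\dd\mu_{i,0}$, so $\mu_1(B')=\mu_2(B')$. On $B''$, I would distinguish two cases according to the capacity of $B''$: if $\caps(B'')=0$ then $\mu_1(B'')=\mu_2(B'')=0$ since both measures lie in $\M$; if $\caps(B'')>0$ then, because $B''\subseteq\{w=0\}\cap\{f>0\}$, Lemma \ref{lm:muInfwithF} applied to both $\mu_1$ and $\mu_2$ yields $\mu_1(B'')=\mu_2(B'')=\infty$. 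Either way, $\mu_1(B'')=\mu_2(B'')$, which combined with the agreement on $B'$ gives $\mu_1(B)=\mu_2(B)$.

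The only nontrivial step is the testing argument with $z_n\to 1_{A_\epsilon}$, but this is exactly the mechanism already used in Lemma \ref{lm:lambdaIsMu}; the extra factor $f$ on the right-hand side is harmless because it appears identically on both sides and cancels in the subtraction. The role of Lemma \ref{lm:muInfwithF} on $\{w=0\}\cap\{f>0\}$ is precisely why the conclusion must be restricted to $B\subseteq \{w>0\}\cup\{f>0\}$: outside of $\{f>0\}$, the two measures are not forced to be infinite on capacitary subsets of $\{w=0\}$, and hence need not agree there.
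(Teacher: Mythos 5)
Your proposal is correct and follows essentially the same route as the paper: it handles $B\cap\{w>0\}$ by repeating the weighted/truncated-measure argument of Lemma \ref{lm:lambdaIsMu} (correctly noting that the $(w,z_n)_W$ and $\int_\Omega f z_n\,\dd x$ terms cancel upon subtraction), and it handles $B\cap\{w=0\}\subseteq\{f>0\}$ via Lemma \ref{lm:muInfwithF} together with the fact that both measures vanish on sets of capacity zero. The only difference is that you write out in full the part the paper dispatches with ``just as in the proof of Lemma \ref{lm:lambdaIsMu}''.
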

	\begin{proof}
		For a Borel set $B$ with $B\subseteq\{w>0\}$, equality of the two measures follows just as in the proof of Lemma \ref{lm:lambdaIsMu}.
		If $B\subseteq \{w=0\}\cap\{f>0\}$ is a Borel set with $\caps(B)>0$, then $\mu_1(B)=\mu_2(B)=\infty$ by Lemma \ref{lm:muInfwithF}. For a general Borel set $B\subseteq \{w>0\} \cup \{f>0\}$, this yields
		\begin{multline*}
			\mu_1(B)=\mu_1(B\cap\{w>0\})+\mu_1(B\cap\{w=0\}\cap \{f>0\})\\=\mu_2(B\cap\{w>0\})+\mu_2(B\cap\{w=0\}\cap \{f>0\})=\mu_2(B).
		\end{multline*}
	\end{proof}
	
	\begin{remark}
		In the previous Lemma \ref{lm:lambdaIsMuWithF} we only proved equality of the two measures on $ \{w>0\} \cup \{f>0\}$. This is due to the fact that in the proof we proceeded as in Section \ref{sec:compact} when working with $f\equiv 1$. There, non-negativity of $f$ was used to obtain comparison principles as for example in Lemma \ref{lm:wNonneg}. When trying to split $f=f_++f_-$ and $w=w_++w_-$ to generalize the result, we were not able to deduce $\mu=\infty$ on $w=0$, as it could be possible that both $w_+$ and $w_-$ are non-zero on $w=0$ and thus $\mu_+,\mu_-<\infty$.
	\end{remark}
	
	\begin{proposition}\label{prop:charKwithF}
		Let $\bar w$ be a solution of \eqref{eq:sparseOptProb} with $0\le -F'(\bar w)\in W\cap\Linf $, $\bar \lambda\ge 0$ and let $\bar w$ solve
		\begin{equation}\label{eq:zForCharKwithF}(\bar w,v)_W + \braket{ \bar \lambda, \bar w}_W = -\io F'(\bar w)v \dd x \qquad \forall v\in W\cap L^2_{\bar\mu}(\Omega).\end{equation}
		Let $\bar\mu^z$ and $\bar\mu^w$ be the measures from Theorems \ref{tm:firstMu} and \ref{tm:secondMu}, respectively.
		Then it holds $\bar\mu^z=\bar\mu^w$ on Borel sets $B \subseteq \{\bar w>0\} \cup \{-F'(\bar w)>0\}$. 
	\end{proposition}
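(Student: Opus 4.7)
The plan is to rewrite both optimality conditions in a common form and then invoke the uniqueness statement of Lemma \ref{lm:lambdaIsMuWithF}. From Theorem \ref{tm:firstMu} we have
\[ \alpha(\bar w, v)_W + \beta \io \bar w v \dd\bar\mu^z = -\braket{F'(\bar w), v}_W \qquad \forall v \in W \cap L^2_{\bar\mu^z}(\Omega), \]
and by Theorem \ref{tm:secondMu} the analogous identity holds with $\bar\mu^w$ in place of $\bar\mu^z$. Setting $f \coloneqq -\frac{1}{\alpha}F'(\bar w)$, the hypothesis yields $0 \le f \in W \cap \Linf$, so in particular $f \in L^2(\Omega)$ and the duality pairing $\braket{-F'(\bar w), v}_W$ reduces to $\int_\Omega \alpha f v \dd x$. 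Dividing by $\alpha$, the measures $\tilde\mu_1 \coloneqq \tfrac{\beta}{\alpha}\bar\mu^z$ and $\tilde\mu_2 \coloneqq \tfrac{\beta}{\alpha}\bar\mu^w$ both satisfy
\[ (\bar w, v)_W + \io \bar w v \dd\tilde\mu_i = \io f v \dd x \qquad \forall v\in W\cap L^2_{\tilde\mu_i}(\Omega). \]

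Next, I would verify the hypotheses of Lemma \ref{lm:lambdaIsMuWithF}. The two conditions defining $\M$ in Definition \ref{def:capMeasure} are clearly preserved under multiplication by a positive scalar, so $\tilde\mu_1, \tilde\mu_2 \in \M$. The inclusion $\bar w \in W \cap L^2_{\tilde\mu_1}(\Omega) \cap L^2_{\tilde\mu_2}(\Omega)$ is immediate from Theorems \ref{tm:firstMu} and \ref{tm:secondMu}. Applying Lemma \ref{lm:lambdaIsMuWithF} then gives $\tilde\mu_1(B) = \tilde\mu_2(B)$ for every Borel set $B \subseteq \{\bar w > 0\} \cup \{f > 0\} = \{\bar w > 0\} \cup \{-F'(\bar w) > 0\}$, and cancelling the positive factor $\beta/\alpha$ yields the desired identity $\bar\mu^z(B) = \bar\mu^w(B)$ on such sets.

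Since the heavy lifting has already been carried out in Lemma \ref{lm:lambdaIsMuWithF}, no serious obstacle arises. The only points requiring attention are the bookkeeping of the constants $\alpha, \beta$ and the identification of $\braket{-F'(\bar w), \cdot}_W$ with $\io (-F'(\bar w))\,\cdot \dd x$, both of which are straightforward under the stated assumptions $0 \le -F'(\bar w) \in W\cap \Linf$.
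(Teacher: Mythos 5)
Your proposal is correct and follows essentially the same route as the paper: the paper's proof consists of invoking Lemma \ref{lm:lambdaIsMuWithF} directly, exactly as you do. Your explicit rescaling by $\beta/\alpha$ (noting that $\M$ and the spaces $L^2_{\mu}(\Omega)$ are invariant under positive scalar multiples of the measure) and the identification of $\braket{-F'(\bar w),\cdot}_W$ with $\io(-F'(\bar w))\,\cdot\,\dd x$ are just the bookkeeping details the paper leaves implicit.
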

	\begin{proof}
		The equality of the two measures on sets $B \subseteq \{\bar w>0\} \cup \{-F'(\bar w)>0\}$ follows from Lemma \ref{lm:lambdaIsMuWithF}.
	\end{proof}

	\begin{remark}
		As noted in \cite{sparse}, the choice $\bar w=0$ solves the system \eqref{eq:ocMinprobSparse}, \eqref{eq:ocMinProbSparseLambda} with $\bar\lambda=-\frac{1}{\beta}F'(0)$. 
		This holds also true for an optimality condition of the form 
		\[
		\alpha (\bar w,v)_W +\beta  \io \bar wv\dd\bar\mu_{\infty}^w =-\braket{F'(\bar w),v}_W 
		\qquad \forall v\in W\cap L^2_{\bar\mu^{\bar w}}(\Omega),
		\]
		using capacitary measures as above, where we define $\bar\mu_{\infty}$ as 
		\[ \bar\mu_{\infty}(B)= \begin{cases}
			0 & \text{ if } \caps(B)=0 \\ \infty & \text{ if }\caps(B\cap\{\bar w=0\})=\caps(B)>0.
		\end{cases}\]
		In this case, $L_{\bar\mu_{\infty}}^2(\Omega)=\{0\}$.
		Note that this measure is in $\M$ as sets of capacity zero are quasi open, so $\bar\mu_{\infty}$ satisfies $(ii)$ in Definition \ref{def:capMeasure}.
	\end{remark}

	\subsection{Case $p=0$}
	
	We now want to solve
	\begin{align} \label{eq:minprob0}
		\underset{ w\in W}{\min}\, F(w) + \frac{\alpha}{2}\|w\|_W^2+\beta \int_{\Omega}|w|^0\dd x,
	\end{align}
	where we set $0^0=0$. The $L^0$-pseudo norm measures the support of a function, so
	\[\io |w|^0 \dd x = \mathcal{L}({w>0}),
	\]
	where $\mathcal{L}$ denotes the Lebesgue measure.
	This choice of $p=0$ is not covered in \cite{sparse}. However, the case $p=0$ was already investigated in \cite{lp_cont} for the case $s=1$. We therefore first show existence of solutions and optimality conditions for this problem. This is done in two ways: First, by investigating the limit $p\searrow 0$ for solutions of the problem for $p\in(0,1)$ from \cite{sparse}, and then by smoothing the $L^0$-pseudo-norm similarly as in \cite{sparse} for $p\in(0,1)$. The first approach leads to a stronger optimality condition, whereas the second approach is more useful in the numerical implementation. \\
	The $L^0$-pseudo norm is non-convex and not continuous or weakly lower semicontinuous in $L^2(\Omega)$. However, it is lower semicontinuous on $L^2(\Omega)$ and thus weakly lower semicontinuous on $W$ due to the compact embedding of $W$ into $L^2(\Omega)$. 
	
	\begin{theorem}\label{tm:exsol0}
		Under Assumption \ref{ass:f}, the minimization problem \ref{eq:minprob0} admits a solution.
	\end{theorem}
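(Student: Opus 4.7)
The plan is to apply the direct method of the calculus of variations to the objective
\[
J(w) \coloneqq F(w) + \frac{\alpha}{2}\|w\|_W^2 + \beta \int_{\Omega} |w|^0 \dd x.
\]
First I would verify coercivity and a finite lower bound. By Assumption \ref{ass:f} there exist $g \in W^*$ and $c \in \R$ with $F(w) \ge \braket{g,w}_W + c$, so Young's inequality gives
\[
J(w) \ge \frac{\alpha}{4}\|w\|_W^2 - \frac{1}{\alpha}\|g\|_{W^*}^2 + c,
\]
since the $L^0$-term is non-negative. Hence $\inf_W J > -\infty$ and every minimizing sequence is bounded in $W$.

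Next I would select a minimizing sequence $(w_n) \subset W$ with $J(w_n) \to \inf_W J$. By reflexivity of $W$ and the bound above, a subsequence satisfies $w_n \rightharpoonup \bar w$ in $W$. The compact embedding $W \hookrightarrow L^2(\Omega)$ of Proposition \ref{prop:propFrac} yields $w_n \to \bar w$ strongly in $L^2(\Omega)$, and after passing to a further subsequence also $w_n \to \bar w$ pointwise a.e.\ in $\Omega$.

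The final step is to establish weak lower semicontinuity of $J$ at $\bar w$. Weak lower semicontinuity of $F$ is part of Assumption \ref{ass:f}, while $w \mapsto \|w\|_W^2$ is weakly lower semicontinuous as a squared Hilbert space norm. For the $L^0$-term I would argue termwise: if $\bar w(x) \neq 0$ and $w_n(x) \to \bar w(x)$, then $w_n(x) \neq 0$ for all sufficiently large $n$, so $1_{\{\bar w \neq 0\}}(x) \le \liminf_n 1_{\{w_n \neq 0\}}(x)$ pointwise a.e.; Fatou's lemma then gives
\[
\int_{\Omega} |\bar w|^0 \dd x \le \liminf_{n \to \infty} \int_{\Omega} |w_n|^0 \dd x.
\]
Adding the three inequalities yields $J(\bar w) \le \liminf_n J(w_n) = \inf_W J$, so $\bar w$ is a minimizer.

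The only subtle point is the $L^0$ contribution, since $w \mapsto \int_\Omega |w|^0 \dd x$ is neither convex nor weakly lower semicontinuous on $L^2(\Omega)$ with respect to weak convergence. The compact embedding $W \hookrightarrow L^2(\Omega)$, upgrading weak $W$-convergence to strong $L^2$-convergence and hence (along a subsequence) to pointwise a.e.\ convergence, is precisely what lets Fatou's lemma apply and bridges this gap; this is the only nonstandard ingredient in an otherwise classical direct-method argument.
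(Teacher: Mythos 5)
Your proposal is correct and follows essentially the same route as the paper: the direct method with a bounded minimizing sequence (using the affine lower bound on $F$), weak convergence in $W$, strong $L^2$-convergence via the compact embedding, and weak lower semicontinuity of each term. The only difference is that you spell out, via pointwise a.e.\ convergence and Fatou's lemma, the lower semicontinuity of the $L^0$-term that the paper simply records as a known fact in the paragraph preceding the theorem.
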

	\begin{proof}
		This follows by standard arguments as in \cite[Theorem 3.1]{sparse} for $p>0$.
		Let $(w_k)$ be a minimizing sequence. Assumption \ref{ass:f} yields boundedness of $(w_k)$ in $W$. Hence, there is $w\in W$ such that  $w_k \rightharpoonup w$ in $W$ and $w_k \to w$ in $L^2(\Omega)$. Hence, \begin{align*}
			\underset{k\to\infty}{\lim \inf} \, F(w_k) + \frac{\alpha}{2}\|w_k\|_W^2+\frac{\beta}{2} \int_{\Omega}|w_k|^0\dd x  \geq F(w) + \frac{\alpha}{2}\|w\|_W^2+\beta \int_{\Omega}|w|^0\dd x
		\end{align*} due to weak lower semicontinuity.
		This shows that $w$ attains the infimum.
	\end{proof}
	
	\subsubsection{Limit $p\to 0$}\label{sec:ocPTo0}
	To obtain an optimality condition for the minimization problem \eqref{eq:minprob0}, we use the results from \cite{sparse} for $p\in(0,1)$. For a sequence of solutions $\bar w_p$ to a slightly adapted version of problem \eqref{eq:sparseOptProb}, we pass to the limit $p\searrow 0$ in the corresponding optimality condition.\\
	We first state the following convergence result.
	
	\begin{lemma}\label{lm:pTo0conv}
		Let $w_k \to w$ in $\Ltwo$ and let $p_k \to 0$. Then \[
		\io |w_k|^{p_k} \dd x \to \io |w|^0 \dd x. \]
	\end{lemma}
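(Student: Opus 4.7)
The plan is to pass to an a.e.\@-convergent subsequence of $(w_k)$ and split $\Omega$ into the set where $w$ is nonzero and the set where $w$ vanishes, handling each piece separately by dominated convergence.

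First, since $w_k\to w$ in $\Ltwo$, I would extract a (non-relabeled) subsequence that converges to $w$ pointwise a.e.\@ in $\Omega$ and that is dominated by some $g\in \Ltwo$; since $\Omega$ is bounded, $g\in L^1(\Omega)$ as well. By the standard subsequence-of-subsequence argument it suffices to prove the convergence of $\io|w_k|^{p_k}\dd x$ along this particular subsequence.

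Second, I would partition $\Omega=A\cup A^c$ with $A:=\{w\neq 0\}$. On $A$, for a.a.\@ $x$ one has $|w_k(x)|\to|w(x)|>0$, so $p_k\log|w_k(x)|\to 0$ and hence $|w_k(x)|^{p_k}\to 1=|w(x)|^0$. Using the pointwise bound $|w_k|^{p_k}\le \max(1,|w_k|)\le \max(1,g)$ (valid once $p_k\le 1$), which is integrable on the bounded domain $\Omega$, dominated convergence gives
\[
\int_A |w_k|^{p_k}\dd x\to |A|=\io |w|^0\dd x.
\]

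Third, on $A^c=\{w=0\}$ the target is $\int_{A^c}|w_k|^{p_k}\dd x\to 0$. Here $w_k\to 0$ pointwise a.e.\@ on $A^c$ and also in $\Ltwo$. I expect the main obstacle of the proof to lie precisely in this step: the competition between $|w_k|\to 0$ and $p_k\to 0$ makes the pointwise behaviour of $|w_k|^{p_k}$ delicate, since a slow decay of $w_k$ coupled with the decay of $p_k$ may keep $|w_k|^{p_k}$ bounded away from $0$. A natural strategy is to split $A^c$ into $\{|w_k|\ge \delta_k\}$, whose Lebesgue measure is controlled by $\|w_k\|_{\Ltwo(A^c)}^2/\delta_k^2$ via Markov's inequality, and $\{|w_k|<\delta_k\}$, on which $|w_k|^{p_k}\le \delta_k^{p_k}$; then one tries to choose $\delta_k$ so that $\delta_k^{p_k}\to 0$ while simultaneously $\|w_k\|_{\Ltwo(A^c)}^2/\delta_k^2\to 0$. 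Combining the two contributions with the estimate from step two then yields the claim.
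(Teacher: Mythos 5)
Your treatment of $A=\{w\neq 0\}$ is correct and essentially reproduces the paper's argument (the paper truncates with the cut-off $N_\epsilon(w)$ supported on $\{w>\epsilon\}$ and applies dominated convergence there). The genuine problem is your third step, and it is not merely the hard part: it cannot be completed, because the two requirements you place on $\delta_k$ are incompatible in general. Indeed, $\delta_k^{p_k}\to 0$ means $p_k\ln\delta_k\to-\infty$, i.e. $\delta_k\le e^{-m_k/p_k}$ with $m_k\to\infty$, while $\|w_k\|_{L^2(A^c)}^2/\delta_k^2\to 0$ forces $\delta_k$ to be much larger than $\|w_k\|_{L^2(A^c)}$; if $w_k$ decays on $\{w=0\}$ only at the rate $e^{-1/p_k}$, no such $\delta_k$ exists. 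Moreover, no other argument can rescue this step, since the statement itself fails without a quantitative link between $p_k$ and the speed of convergence on $\{w=0\}$: take $w=0$ and $w_k\equiv e^{-1/p_k}$ (or $w_k=e^{-1/p_k}\varphi$ with $\varphi\in\Cinf$, $\varphi=1$ on a ball $B$, if membership in $W$ is wanted). Then $w_k\to 0$ in $\Ltwo$ and $p_k\to 0$, yet $\io|w_k|^{p_k}\dd x\ge e^{-1}\,|B|$, which does not converge to $\io|w|^0\dd x=0$. So the obstruction you sensed is an actual counterexample, not a technical nuisance, and every subsequence exhibits the same behaviour, so the subsequence reduction does not help.

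For comparison, the paper's own proof stops at exactly the same point: it shows $\io N_\epsilon(w)|w_k|^{p_k}\dd x\to\io N_\epsilon(w)|w|^0\dd x$ for fixed $\epsilon$ and then lets $\epsilon\to 0$ in the limit integral, but never controls $\int_{\{|w|\le\epsilon\}}|w_k|^{p_k}\dd x$ uniformly in $k$ — that is precisely your set $A^c$. What does survive, and is essentially what the subsequent results use, is (a) the fixed-function case $w_k=w$, where $|w|^{p_k}\to|w|^0$ pointwise with dominating function $\max(1,|w|)\in L^1(\Omega)$, and (b) the one-sided bound $\liminf_{k\to\infty}\io|w_k|^{p_k}\dd x\ge\io|w|^0\dd x$, which your steps one and two already give via Fatou on $A$ and nonnegativity on $A^c$. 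A two-sided statement for a genuinely varying sequence needs an additional hypothesis, for instance $\|w_k\|_{L^{\infty}(\{w=0\})}^{p_k}\to 0$ or some coupling of $p_k$ to $\|w_k-w\|$.
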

	\begin{proof}
		We proceed as in \cite[Theorem 2.8]{138sparseOptLagr}.
		Define \[ N_{\epsilon}(w) \coloneqq \begin{cases}
			1 & \text{if } w>\epsilon, \\
			0 & \text{if } w \le \epsilon.
		\end{cases} \]
		Let $\epsilon>0$ be fixed. Pointwise convergence $N_{\epsilon}(w) |w_k|^{p_k} \to N_{\epsilon}(w) |w|^0$ for $k\to\infty$ yields $\io N_{\epsilon}(w) |w_k|^{p_k} \dd x \to \io N_{\epsilon}(w) |w|^0 \dd x$ by dominated convergence. \\
		In order to pass to the limit $\epsilon \to 0$ in $\io N_{\epsilon}(w) |w|^0 \dd x$, we note that $ N_{\epsilon}(w) |w|^0  \to |w|^0$ pointwise. Thus, the result follows again by dominated convergence.
	\end{proof}
	
	For the sake of completeness, we repeat the following auxiliary lemma from \cite{spatSparse}.
	\begin{lemma}\cite[Lemma 4.2]{spatSparse}\label{lm:auxLimSupInf} 
		Let $N\in \N$.
		Let $(a^1_k), \dots, (a^N_k)$ be  sequences with
		\[
		\liminf_{k \to \infty} a_k^i \in \R \quad \forall i=1,\dots, N
		\]
		and
		\[
		\limsup_{k\to\infty} \left(\sum_{i=1}^N a^i_k \right) \le \sum_{i=1}^N ( \liminf_{k\to\infty} a^i_k ).
		\]
		Then all sequences $(a^1_k), ..., (a^N_k)$ are convergent with limits in $\R$.
	\end{lemma}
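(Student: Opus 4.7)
The plan is to exploit the gap between the hypothesis and the generally valid inequality for $\liminf$ of a finite sum. Writing $L^i := \liminf_{k\to\infty} a_k^i \in \R$ and $S := \sum_{i=1}^N L^i$, the first step would be to observe the standard chain
\[
\sum_{i=1}^N L^i \;\le\; \liminf_{k\to\infty} \sum_{i=1}^N a_k^i \;\le\; \limsup_{k\to\infty} \sum_{i=1}^N a_k^i \;\le\; \sum_{i=1}^N L^i,
\]
where the first inequality uses superadditivity of $\liminf$ (valid since all $L^i$ are finite) and the last one is exactly the hypothesis. Hence $\sum_i a_k^i \to S$ in $\R$.

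Next, fix an index $j$ and consider an arbitrary subsequence $(k_n)$. The plan is to extract, by diagonal compactness in $[-\infty,+\infty]$, a further subsequence $(k_{n_m})$ along which $a_{k_{n_m}}^i \to \ell^i \in [-\infty,+\infty]$ for every $i = 1,\ldots,N$ simultaneously. Each $\ell^i$ is a subsequential limit of $(a_k^i)$, so $\ell^i \ge L^i > -\infty$. Since $\sum_i a_{k_{n_m}}^i \to S \in \R$ and each $\ell^i$ is bounded below by the finite $L^i$, no $\ell^i$ can equal $+\infty$ without forcing the partial sums to diverge. Passing to the limit gives $\sum_i \ell^i = S = \sum_i L^i$, and combined with $\ell^i \ge L^i$ this forces $\ell^i = L^i$ for every $i$. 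In particular $a_{k_{n_m}}^j \to L^j$, and since every subsequence of $(a_k^j)$ admits a further subsequence converging to the same limit $L^j \in \R$, one concludes $a_k^j \to L^j$.

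The only real subtlety will be the case analysis ruling out $\ell^i = \pm\infty$ for the simultaneous subsequential limits; this is short once one has observed that the sum converges in $\R$ to a finite value. The remainder is a routine application of the standard $\liminf$/$\limsup$ identities and the characterization of convergence by uniqueness of subsequential limits.
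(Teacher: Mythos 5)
Your argument is correct. The paper itself does not prove this lemma---it is quoted verbatim from \cite[Lemma 4.2]{spatSparse}---so there is no in-paper proof to compare against; judged on its own, your plan is sound and complete in all essential points: superadditivity of $\liminf$ for finitely many sequences with finite lower limits gives $\sum_i L^i \le \liminf_k \sum_i a_k^i$, which together with the hypothesis forces $\sum_i a_k^i \to S=\sum_i L^i \in \R$; then, along any subsequence, simultaneous extraction of extended-real limits $\ell^i \ge L^i > -\infty$ rules out $\ell^i=+\infty$ (otherwise the sum, having no $\infty-\infty$ cancellation, would tend to $+\infty$), and $\sum_i \ell^i = S = \sum_i L^i$ with $\ell^i \ge L^i$ pins down $\ell^i = L^i$, so the subsequence principle yields $a_k^i \to L^i$. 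One remark: the subsequence machinery can be avoided entirely. Once you know $\sum_i a_k^i \to S$, for each fixed $j$ you can estimate $\limsup_k a_k^j \le \limsup_k \bigl(\sum_i a_k^i\bigr) - \liminf_k \bigl(\sum_{i\ne j} a_k^i\bigr) \le S - \sum_{i\ne j} L^i = L^j$, using again superadditivity of $\liminf$ for the remaining $N-1$ sequences (all quantities involved are finite, so no indeterminate forms arise); hence $\limsup_k a_k^j \le \liminf_k a_k^j$ and $a_k^j \to L^j$ directly. This shortens the case analysis you flagged as the only subtlety, but your compactness route is equally valid.
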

	
	Now we can approximate a local solution $\bar w_0$ of \eqref{eq:minprob0} by solutions of problems for $p>0$.
	
	\begin{lemma}\label{lm:convWk0p}
		Let $p_k \to 0$ and let $\bar w_0$ be a local solution of \eqref{eq:minprob0} such that $\bar w_0$ is locally optimal on $B_{\rho}(\bar w_0)$ for some $\rho>0$. Let $\bar w_{p_k}$ denote the solution of 
		\begin{equation}\label{eq:minProbAuxpFor0} \min_{w\in W} F(w) + \frac{\alpha}{2} \|w\|_W^2 +\beta \|w\|_{p_k}^{p_k} + \frac12 \|w-\bar w_0\|_W^2  \qquad \text{s.t. } \qquad \|w-\bar w_0\|_{W} \le \rho. \end{equation}
		Then $\bar w_{p_k}\to \bar w_0$ in $W$.
	\end{lemma}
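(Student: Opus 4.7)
The plan is a standard argument for solutions of penalized problems, combined with Lemma \ref{lm:pTo0conv} to handle the $L^{p_k}$-terms. Existence of each $\bar w_{p_k}$ follows as in Theorem \ref{tm:exsol0} (the added convex quadratic and closed convex constraint only help with coercivity and weak closedness of the admissible set). The constraint $\|\bar w_{p_k}-\bar w_0\|_W\le\rho$ yields boundedness of $(\bar w_{p_k})$ in $W$, so passing to a subsequence gives $\bar w_{p_k}\rightharpoonup \tilde w$ in $W$ and, by Proposition \ref{prop:propFrac}, $\bar w_{p_k}\to \tilde w$ in $\Ltwo$. Weak closedness of the ball places $\tilde w$ in $B_\rho(\bar w_0)$, so it is a feasible competitor for the local optimality of $\bar w_0$ in \eqref{eq:minprob0}.

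Using $\bar w_0$ itself as a test point in the minimization problem defining $\bar w_{p_k}$ gives
\begin{align*}
F(\bar w_{p_k}) + \tfrac{\alpha}{2}\|\bar w_{p_k}\|_W^2 + \beta\io |\bar w_{p_k}|^{p_k}\dd x + \tfrac12\|\bar w_{p_k}-\bar w_0\|_W^2 \le F(\bar w_0) + \tfrac{\alpha}{2}\|\bar w_0\|_W^2 + \beta\io |\bar w_0|^{p_k}\dd x.
\end{align*}
Lemma \ref{lm:pTo0conv}, applied to the constant sequence $\bar w_0$ and to the convergent sequence $\bar w_{p_k}\to\tilde w$ in $\Ltwo$, shows that the rightmost term tends to $\beta\io|\bar w_0|^0\dd x$ and the $L^{p_k}$-term on the left tends to $\beta\io|\tilde w|^0\dd x$. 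Weak lower semicontinuity handles $F$, $\|\cdot\|_W^2$ and $\|\cdot-\bar w_0\|_W^2$. Taking $\limsup$ of the LHS bounded above by the limit of the RHS, the four summands on the LHS satisfy the hypothesis of Lemma \ref{lm:auxLimSupInf}, so each of them converges individually, and
\begin{align*}
F(\tilde w) + \tfrac{\alpha}{2}\|\tilde w\|_W^2 + \beta \io |\tilde w|^0 \dd x + \tfrac12\|\tilde w-\bar w_0\|_W^2 \le F(\bar w_0) + \tfrac{\alpha}{2}\|\bar w_0\|_W^2 + \beta\io|\bar w_0|^0 \dd x.
\end{align*}

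Local optimality of $\bar w_0$ applied to the feasible competitor $\tilde w$ gives the reverse inequality without the $\tfrac12\|\tilde w-\bar w_0\|_W^2$ term, so that extra quadratic must vanish, forcing $\tilde w=\bar w_0$. The convergence $\|\bar w_{p_k}\|_W^2\to\|\bar w_0\|_W^2$ obtained from Lemma \ref{lm:auxLimSupInf}, together with the weak convergence in the Hilbert space $W$, upgrades weak to strong convergence along the subsequence. Since every subsequence admits a further subsequence converging strongly to the same limit $\bar w_0$, the full sequence converges.

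The main subtlety is that $w\mapsto \io |w|^0\dd x$ is only lower semicontinuous under weak $W$-convergence and not continuous, so one cannot simply pass to the limit in the comparison inequality. This is precisely what Lemma \ref{lm:pTo0conv} buys us: thanks to the joint limit $(p_k,\bar w_{p_k})\to(0,\tilde w)$, the penalty terms become an actually convergent sequence, which, combined with Lemma \ref{lm:auxLimSupInf}, transfers equality in the limit to the $W$-norm term and thereby delivers the crucial norm convergence needed for strong convergence.
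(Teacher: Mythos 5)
Your proof is correct and takes essentially the same route as the paper: compare $\bar w_{p_k}$ with $\bar w_0$ in \eqref{eq:minProbAuxpFor0}, treat the $L^{p_k}$-terms with Lemma \ref{lm:pTo0conv}, use weak lower semicontinuity and local optimality to identify the weak limit as $\bar w_0$, and apply Lemma \ref{lm:auxLimSupInf} to upgrade to norm (hence strong) convergence, plus the standard subsequence argument. The only difference is cosmetic: you invoke Lemma \ref{lm:auxLimSupInf} before identifying $\tilde w=\bar w_0$, so verifying its hypothesis already requires the local optimality comparison between $\bar w_0$ and the feasible competitor $\tilde w$ that you only state afterwards (the paper first identifies the limit, then applies the lemma), but the ingredients you have already stated suffice to close this.
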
 
	\begin{proof}
		By definition, the sequence $(\bar w_{p_k})$ is bounded in $W$, so  there is $w^*$ such that $\bar w_{p_k}\rightharpoonup w^*$ in $W$ along a subsequence still denoted by $(\bar w_{p_k})$.
		Optimality of $\bar w_{p_k}$ leads to
		\begin{equation}\label{eq:ineqForLimSupConvPLim}
			F(\bar w_{p_k}) + \frac{\alpha}{2} \|\bar w_{p_k}\|_W^2 +\beta \io |\bar w_{p_k}|^{p_k} \dd x + \frac12 \|\bar w_{p_k}-\bar w_0\|_W^2 \le F(\bar w_0)  + \frac{\alpha}{2} \|\bar w_{0}\|_W^2 +\beta \io |\bar w_0|^{p_k} \dd x.
		\end{equation}
		Passing to the limit inferior and using Lemma \ref{lm:pTo0conv} yields 
		\[ F(w^*) + \frac{\alpha}{2} \|w^*\|_W^2 +\beta \io |w^*|^0 \dd x + \frac12 \|w^*-\bar w_0\|_W^2 \le F(\bar w_0)  + \frac{\alpha}{2} \|\bar w_{0}\|_W^2 +\beta \io |\bar w_0|^{0} \dd x.\]
		Local optimality of $\bar w_0$ and weak closedness of the admissible set implies thus $w^*=\bar w_0$. 
		After passing to the limit superior in equation \eqref{eq:ineqForLimSupConvPLim} and some estimations we obtain
		\begin{multline*}
			F(\bar w_0)  + \frac{\alpha}{2} \|\bar w_{0}\|_W^2 +\beta \io |\bar w_0|^0 \dd x = \lim_{k\to\infty} F(\bar w_0)  + \frac{\alpha}{2} \|\bar w_{0}\|_W^2 +\beta \io |\bar w_0|^{p_k} \dd x \\
			\ge \limsup_{k\to\infty}\left(F(\bar w_{p_k}) + \frac{\alpha}{2} \|\bar w_{p_k}\|_W^2 +\beta \io |\bar w_{p_k}|^{p_k} \dd x + \frac12 \|\bar w_{p_k}-\bar w_0\|_W^2 \right) \\
			\ge \liminf_{k\to\infty} F(\bar w_{p_k}) + \liminf_{k\to\infty} \frac{\alpha}{2} \|\bar w_{p_k}\|_W^2 +\liminf_{k\to\infty} \beta \io |\bar w_{p_k}|^{p_k} \dd x + \liminf_{k\to\infty} \frac12 \|\bar w_{p_k}-\bar w_0\|_W^2 \\
			\ge F(\bar w_0)  + \frac{\alpha}{2} \|\bar w_{0}\|_W^2 +\beta \io |\bar w_0|^{p_k} \dd x 
		\end{multline*}
		Hence the assumptions of Lemma \ref{lm:auxLimSupInf} are satisfied which yields $\|\bar w_{p_k}\|_W\to \|\bar w_0\|_W$, so $\bar w_{p_k} \to \bar w_0$ in $W$.
	\end{proof}
	
	Using $\tilde F \colon W \to \R$, $\tilde F(w )\coloneqq F(w) + \frac12 \|w-\bar w_0\|_W^2$ as choice of $F$ in \cite[Theorem 5.7]{sparse} leads to the optimality condition 
	\begin{equation}\label{eq:ocAuxProbpTo0}
		\alpha (\bar w_{p_k},v)_W + \beta \braket{\bar\lambda_{p_k},v}_W + (\bar w_{p_k} - \bar w_0,v)_W = -\braket{F'(\bar w_{p_k}), v}_W \qquad \forall v \in W \end{equation}
	and 
	\begin{equation}\label{eq:pkLambdaW} \braket{\bar \lambda_{p_k}, \bar w_{p_k}}_W = p_k \io |\bar w_{p_k}|^{p_k}\dd x \end{equation}
	for problem \eqref{eq:minProbAuxpFor0} with $\bar \lambda_{p_k} \in W^*$ as $\|\bar w_{p_k}-\bar w_0\|_{W} < \rho$ for $k$ large enough by the previous Lemma \ref{lm:convWk0p}.
	
	\begin{theorem}\label{tm:ocLimPto0}
		Let $\bar w_0$ be a local solution of \eqref{eq:minprob0}. Then $\bar\lambda_0 \in W^*$ defined by
		\begin{equation} \label{eq:ocLimPto0} \alpha (\bar w_0,v)_W + \beta \braket{\bar \lambda_0,v}_W  = -\braket{F'(\bar w_0), v}_W \qquad \forall v \in W\end{equation}
		satisfies 
		\begin{equation} \label{eq:lambdaW0Forp0}
			\braket{\bar \lambda_0, \bar w_0}_W = 0 .
		\end{equation}
	\end{theorem}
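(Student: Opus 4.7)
The plan is to pass to the limit $p_k \searrow 0$ in both sides of the identity \eqref{eq:pkLambdaW}, using the strong convergence $\bar w_{p_k}\to \bar w_0$ in $W$ from Lemma \ref{lm:convWk0p}. The multiplier $\bar\lambda_{p_k}$ from \eqref{eq:ocAuxProbpTo0} is defined implicitly as
\[
\beta \bar\lambda_{p_k} = -F'(\bar w_{p_k}) - \alpha(\bar w_{p_k},\cdot)_W - (\bar w_{p_k}-\bar w_0,\cdot)_W
\]
in $W^*$. Since $F$ is continuously Fréchet differentiable by Assumption \ref{ass:f}, strong convergence $\bar w_{p_k}\to \bar w_0$ in $W$ yields $F'(\bar w_{p_k})\to F'(\bar w_0)$ in $W^*$. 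The other two terms on the right-hand side also converge in $W^*$ (the last one to zero), so $\bar\lambda_{p_k}\to \bar\lambda_0$ strongly in $W^*$, where $\bar\lambda_0$ is exactly the element defined by \eqref{eq:ocLimPto0}.

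Combining strong convergence $\bar\lambda_{p_k}\to \bar\lambda_0$ in $W^*$ with strong convergence $\bar w_{p_k}\to \bar w_0$ in $W$, the left-hand side of \eqref{eq:pkLambdaW} satisfies
\[
\braket{\bar\lambda_{p_k},\bar w_{p_k}}_W \longrightarrow \braket{\bar\lambda_0,\bar w_0}_W.
\]
For the right-hand side, the embedding $W\hookrightarrow L^2(\Omega)$ implies $\bar w_{p_k}\to \bar w_0$ in $L^2(\Omega)$, so Lemma \ref{lm:pTo0conv} gives
\[
\int_{\Omega}|\bar w_{p_k}|^{p_k}\dd x \longrightarrow \int_{\Omega}|\bar w_0|^0 \dd x \le |\Omega|<\infty.
\]
Since this limit is finite and $p_k\to 0$, we conclude
\[
p_k \int_{\Omega}|\bar w_{p_k}|^{p_k}\dd x \longrightarrow 0.
\]
Passing to the limit in \eqref{eq:pkLambdaW} then yields $\braket{\bar\lambda_0,\bar w_0}_W=0$, as claimed.

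There is no serious obstacle: the argument is essentially a continuity argument in both slots of the duality pairing, combined with the boundedness of the $L^0$-pseudo-norm by $|\Omega|$, which forces the multiplicative factor $p_k$ to kill the right-hand side in the limit. The only minor technical point is that one must really verify that the intermediate terms in \eqref{eq:ocAuxProbpTo0} converge in the norm topology of $W^*$ (not just weakly), but this follows immediately from strong convergence of $\bar w_{p_k}$ and the continuity of $F'\colon W \to W^*$ granted by Assumption \ref{ass:f}.
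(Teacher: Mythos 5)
Your proof is correct and follows essentially the same route as the paper: deduce strong convergence $\bar\lambda_{p_k}\to\bar\lambda_0$ in $W^*$ from \eqref{eq:ocAuxProbpTo0} together with $\bar w_{p_k}\to\bar w_0$ in $W$ (Lemma \ref{lm:convWk0p}), then pass to the limit in \eqref{eq:pkLambdaW}, using Lemma \ref{lm:pTo0conv} to bound $\int_{\Omega}|\bar w_{p_k}|^{p_k}\dd x$ so that the factor $p_k$ drives the right-hand side to zero. You merely spell out the convergence of each term in the multiplier formula more explicitly than the paper does; no substantive difference.
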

	\begin{proof}
		From condition \eqref{eq:ocAuxProbpTo0} and convergence of $\bar w_{p_k}$, one obtains $\bar \lambda_{p_k} \to \bar \lambda_0$ in $W^*$.
		Thus, we can pass to the limit in \eqref{eq:ocAuxProbpTo0} to obtain the optimality condition \eqref{eq:ocLimPto0} for $\bar w_0$. The statement follows now from \eqref{eq:pkLambdaW} by
		\[\braket{\bar \lambda_0,\bar w_0}_W = \lim_{k\to\infty} \braket{\bar \lambda_{p_k}, \bar w_{p_k}}_W = p_k \io |\bar w_{p_k}|^{p_k}\dd x  \to 0,\]
		as $p_k\to 0$ and $\io |\bar w_{p_k}|^{p_k}\dd x$ is convergent by Lemma \ref{lm:pTo0conv} and thus bounded.
	\end{proof}
	
	\begin{remark}
		By \cite[Remark 5.6]{sparse}, it also holds $\braket{\bar \lambda_{p_k},\bar w_{p_k} \varphi}_W = p \io |\bar w_{p_k}|^{p_k} \varphi \dd x$ for all $\varphi \in C_c^{\infty}(\bar\Omega)$ with $\varphi\ge 0$. Hence, using the same arguments as in the previous proof yields also  $\braket{\bar \lambda_0,\bar w_0 \varphi}_W = 0$ for all $\varphi \in C_c^{\infty}(\bar\Omega)$ with $\varphi\ge 0$. 
	\end{remark}

	\subsubsection{Optimality condition with smoothing scheme}\label{sec:p0Smoothing}
	Another way to approximate a solution $\bar w_0$ and to obtain optimality conditions for the minimization problem \eqref{eq:minprob0} is by employing a smoothing scheme for the $L^0$-pseudo norm similarly as in \cite{sparse} for the case $p\in(0,1)$. This approach yields a more direct representation of the approximating sequence $(\lambda_k)$, which will be helpful when computing a decomposition $\bar\lambda_0 = \bar w_0\bar\mu$ numerically later on. We choose a smooth approximation also used in \cite{164gradProjL0}, where we define $\psi_{\epsilon}^0\colon \R \to \R$  as \begin{align*}
		\psi_{\epsilon}^0(w) \coloneqq \frac{w}{w + \epsilon}
	\end{align*}
	with derivative \begin{align}\label{eq:psiDer0}
		{\psi_{\epsilon}^0}'(w)=\frac{\epsilon}{(w + \epsilon)^2}.
	\end{align}
	Then \begin{align*}
		G^0_{\epsilon}(w)\coloneqq \int_{\Omega} \psi_{\epsilon}^0(w^2) \dd x 
	\end{align*} is an approximation of $G_0^0(w) \coloneqq \int_{\Omega}|w|^0 \dd x$. 
	We have the following convergence properties for $G_{\epsilon}^0$.
	\begin{lemma}\label{lm:convSmooth0}
		Let $\epsilon_k \to 0$, $w_k \to w $ in $\Ltwo$. Then
		\[ G_{\epsilon_k}^0(w) \to G_0^0(w) \qquad \text{and} \qquad \liminf_{k\to \infty}\, G_{\epsilon_k}^0(w_k) \ge G_0^0(w).\]
	\end{lemma}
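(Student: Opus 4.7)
The plan is to pass to the limit pointwise in the integrand and then apply standard convergence theorems. Note first the elementary pointwise behavior: for fixed $t \in \mathbb{R}$ and $\epsilon \to 0$,
\[
\psi_{\epsilon}^0(t^2) \;=\; \frac{t^2}{t^2+\epsilon} \;\longrightarrow\; \begin{cases} 1 & \text{if } t\neq 0, \\ 0 & \text{if } t = 0, \end{cases}
\]
which is exactly $|t|^0$ under the convention $0^0 = 0$. Moreover $0 \le \psi_{\epsilon}^0(t^2) \le 1$ uniformly in $\epsilon$ and $t$.

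For the first assertion, since $\Omega$ is bounded and the integrand is uniformly bounded by the constant $1 \in L^1(\Omega)$, the pointwise convergence $\psi_{\epsilon_k}^0(w^2(x)) \to |w(x)|^0$ for a.e.\ $x\in\Omega$ yields $G_{\epsilon_k}^0(w) \to G_0^0(w)$ by dominated convergence.

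For the second assertion, I would argue via a subsequence-of-subsequence reduction. Set $L := \liminf_{k\to\infty} G_{\epsilon_k}^0(w_k)$ and pick a subsequence $(k_j)$ along which $G_{\epsilon_{k_j}}^0(w_{k_j}) \to L$. Since $w_{k_j} \to w$ in $L^2(\Omega)$, a further (still denoted) subsequence converges pointwise a.e.\ to $w$. On the set $\{w \neq 0\}$ we have $w_{k_j}^2 \to w^2 > 0$ while $\epsilon_{k_j} \to 0$, so $\psi_{\epsilon_{k_j}}^0(w_{k_j}^2) \to 1$; on the set $\{w = 0\}$ the integrand is non-negative, so its $\liminf$ is $\ge 0$. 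In both cases, the pointwise $\liminf$ is bounded below by $|w|^0$.

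Then Fatou's lemma (applicable since $\psi_{\epsilon}^0(w^2) \ge 0$) gives
\[
L \;=\; \lim_{j\to\infty} \int_{\Omega} \psi_{\epsilon_{k_j}}^0(w_{k_j}^2)\,\mathrm{d}x \;\ge\; \int_{\Omega} \liminf_{j\to\infty} \psi_{\epsilon_{k_j}}^0(w_{k_j}^2)\,\mathrm{d}x \;\ge\; \int_{\Omega}|w|^0\,\mathrm{d}x \;=\; G_0^0(w),
\]
which is the desired inequality. The only subtle point is the behavior on $\{w = 0\}$, where both numerator and denominator of $\psi_{\epsilon_{k_j}}^0(w_{k_j}^2)$ vanish; but since we only need the lower bound $0 = |w|^0$ there and the integrand is non-negative, no refined analysis is necessary. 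No further ingredients beyond Fatou, dominated convergence and the $L^2$-to-a.e.\ subsequence extraction are needed.
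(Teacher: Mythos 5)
Your proof is correct and follows essentially the same route as the paper: dominated convergence for the first limit and Fatou's lemma for the liminf inequality. Your explicit subsequence-of-subsequence reduction to get a.e.\ convergence from the $L^2$ convergence (and to realize the liminf) is a small extra precaution that the paper's proof leaves implicit, but it is the same argument.
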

	\begin{proof}
		Convergence $\io \frac{w^2}{w^2 + \epsilon_k} \dd x \to \io |w|^0\dd x $	follows directly from dominated convergence. 
		Let $w_k \to w$ in $\Ltwo$. As $\frac{w_k^2}{w_k^2 + \epsilon_k} \to 1$ if $w>0$ and $\liminf_{k\to \infty} \frac{w_k^2}{w_k^2 + \epsilon} \ge 0 = |w|^0$ if $w=0$, the second statement follows from Fatou's lemma.
	\end{proof}

	For $\epsilon>0$ let us consider the smoothed version of the original objective
	\begin{align} \label{eq:phi_def0}
		\Phi_{\epsilon}^0(w) \coloneq F(w) + \frac{\alpha}{2}\|w\|_W^2+ \beta G^0_{\epsilon}(w).
	\end{align} 
	For a local solution $\Bar{w}_0$ of the non-smoothed problem \eqref{eq:minprob0}, there is $\rho>0$ such that $\Phi_0^0(\Bar{w}_0) \leq \Phi_0^0(w)$ for all $\|w-\Bar{w}_0\|_{W} \leq \rho$. Hence, the  auxiliary problem \begin{align}\label{eq:auxprob0}
		\min_{w\in W} \Phi_{\epsilon}^0(w) + \frac{1}{2} \|w-\Bar{w}_0\|_{L^2(\Omega)}^2   \hspace{5mm}   \textrm{s.t. } \|w-\Bar{w}_0\|_{W} \leq \rho 
	\end{align} has a solution according to the same arguments as in Theorem \ref{tm:exsol0}.
	
	\begin{lemma} \label{lm:conv_w0}
		Let $\bar w_0$ be a local solution of \eqref{eq:minprob0}, let $(\epsilon_k) \subset \R^+$ be a sequence with $\epsilon_k  \to 0$ and let $w_k$ be the solution of problem \eqref{eq:auxprob0} for the smoothing parameter $\epsilon_k$. Then $w_k \to \Bar{w}_0$ in $W$. 
	\end{lemma}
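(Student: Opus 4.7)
The plan follows very closely the strategy already executed for Lemma \ref{lm:convWk0p}, only with the sequence $|w_k|^{p_k}$ replaced by the smoothed $L^0$-functional $G^0_{\epsilon_k}(w_k)$ and with Lemma \ref{lm:pTo0conv} replaced by its counterpart Lemma \ref{lm:convSmooth0}. First I would use the constraint $\|w_k - \bar w_0\|_W \le \rho$ to obtain boundedness of $(w_k)$ in $W$, then extract a subsequence (still denoted $(w_k)$) with $w_k \rightharpoonup w^*$ in $W$ and, by the compact embedding from Proposition \ref{prop:propFrac}, $w_k \to w^*$ in $L^2(\Omega)$.

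Next I would exploit optimality of $w_k$ for \eqref{eq:auxprob0} with test point $\bar w_0$:
\begin{equation*}
F(w_k) + \tfrac{\alpha}{2}\|w_k\|_W^2 + \beta G^0_{\epsilon_k}(w_k) + \tfrac12\|w_k - \bar w_0\|_{L^2(\Omega)}^2 \le F(\bar w_0) + \tfrac{\alpha}{2}\|\bar w_0\|_W^2 + \beta G^0_{\epsilon_k}(\bar w_0).
\end{equation*}
Taking $\liminf_{k\to\infty}$ on the left and using weak lower semicontinuity of $F$, of $\|\cdot\|_W^2$, and the liminf part of Lemma \ref{lm:convSmooth0} applied to the $L^2$-convergent sequence $(w_k)$, and taking $\lim_{k\to\infty}$ on the right via the first part of Lemma \ref{lm:convSmooth0}, yields
\begin{equation*}
F(w^*) + \tfrac{\alpha}{2}\|w^*\|_W^2 + \beta G_0^0(w^*) + \tfrac12\|w^* - \bar w_0\|_{L^2(\Omega)}^2 \le F(\bar w_0) + \tfrac{\alpha}{2}\|\bar w_0\|_W^2 + \beta G_0^0(\bar w_0).
\end{equation*}
Since $w^*$ lies in the closed ball $\overline{B_\rho(\bar w_0)}$ (weakly closed in $W$) and $\bar w_0$ is locally optimal there for $\Phi_0^0$, the extra $L^2$-penalty forces $w^* = \bar w_0$.

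Finally, to upgrade weak to strong convergence in $W$, I would apply the same liminf/limsup bookkeeping as in the proof of Lemma \ref{lm:convWk0p}: taking $\limsup$ on the left in the optimality inequality and then $\liminf$ term by term, and invoking Lemma \ref{lm:auxLimSupInf} with the four sequences $F(w_k)$, $\tfrac{\alpha}{2}\|w_k\|_W^2$, $\beta G^0_{\epsilon_k}(w_k)$, and $\tfrac12\|w_k-\bar w_0\|_{L^2(\Omega)}^2$, all of whose $\liminf$s are finite and whose sum of $\liminf$s dominates $\limsup$ of the sum. This gives convergence of each term separately; in particular $\|w_k\|_W \to \|\bar w_0\|_W$, which together with the weak convergence $w_k \rightharpoonup \bar w_0$ in the Hilbert space $W$ implies $w_k \to \bar w_0$ strongly in $W$. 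The only mildly subtle step is ensuring that Lemma \ref{lm:convSmooth0} indeed gives $\liminf_k G^0_{\epsilon_k}(w_k) \ge G_0^0(\bar w_0)$ from the $L^2$-convergence $w_k \to \bar w_0$; this is exactly its second assertion, so no new difficulty arises.
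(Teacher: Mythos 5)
Your proposal is correct and follows essentially the same route as the paper's proof: boundedness from the ball constraint, weak $W$- and strong $L^2$-convergence of a subsequence, identification of the limit as $\bar w_0$ via the optimality inequality tested with $\bar w_0$ together with Lemma \ref{lm:convSmooth0} and weak lower semicontinuity, and finally the limsup/liminf bookkeeping with Lemma \ref{lm:auxLimSupInf} to upgrade to norm convergence and hence strong convergence in $W$. No substantive differences from the paper's argument.
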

	\begin{proof}
		The proof works similar as in Lemma \ref{lm:convWk0p}. The sequence $(w_k)$ is bounded in $W$ due to the constraint in \eqref{eq:auxprob0}. Hence, there is $w^* \in W$ such that (after extracting a subsequence if necessary) $w_k \rightharpoonup w^*$ in $W$ and $w_k \to w^*$ in $L^2(\Omega)$. By Lemma \ref{lm:convSmooth0}, it holds $\lim_{k\to \infty} G^0_{\epsilon_k}(\Bar{w}_0) = G^0_0(\Bar{w}_0)$ and $\liminf_{k\to \infty} G^0_{\epsilon_k}(w_k) \ge G^0_0(w^*)$, so $\Phi_{\epsilon_k}^0(\Bar{w}_0) \to \Phi_0^0(\Bar{w}_0)$ and weak lower semicontinuity of $F$ yields $\liminf_{k\to \infty} \Phi_{\epsilon_k}^0(w_k)\geq \Phi_0^0(w^*)$. From optimality of $(w_k)$ in \eqref{eq:auxprob0} we get \begin{align}\label{eq:ineqForLimsupConvP0}
			\Phi_{\epsilon_k}^0(\Bar{w}_0)\geq \Phi_{\epsilon_k}^0(w_k) + \frac{1}{2} \|w_k-\Bar{w}_0\|_{L^2(\Omega)}^2.
		\end{align} 
		Passing to the limit on the left and the limit inferior on the right-hand side yields
		\[
		\Phi_{0}^0(\Bar{w}_0)\geq \Phi_0^0(w^*) + \frac{1}{2} \|w^*-\Bar{w}_0\|_{L^2(\Omega)}^2,\]
		so $w^*=\bar w_0$ by optimality of $\bar w_0$ and weak closedness of the admissible set. 
		As in the proof of Theorem \ref{lm:convWk0p}, passing to the limit superior in equation \eqref{eq:ineqForLimsupConvP0} and estimating the right-hand side  also shows that we can apply Lemma  \ref{lm:auxLimSupInf} to obtain strong convergence $ w_k \to \bar w_0$.
	\end{proof}
	
	\begin{lemma}\label{lm:oc}
		Let $w_{\epsilon}$ be the solution of \eqref{eq:auxprob0} for some $\epsilon>0$ with $\|w_{\epsilon}-\Bar{w}_0\|_W < \rho$. Then it holds \begin{align}\label{eq:oc_aux0}
			\braket{F'(w_{\epsilon}),v}_W+\alpha(w_{\epsilon}, v)_W + \beta  {G_{\epsilon}^0}'(w_{\epsilon})z +(w_{\epsilon}-\Bar{w}_0, z)_{L^2(\Omega)} = 0 \hspace{5mm} \forall v\in W.
		\end{align}
	\end{lemma}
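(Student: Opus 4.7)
The plan is very short because this is a standard unconstrained first-order condition. Since by assumption $\|w_{\epsilon}-\bar w_0\|_W < \rho$, the point $w_{\epsilon}$ lies in the interior of the admissible set of problem \eqref{eq:auxprob0}. Consequently, $w_{\epsilon}$ is an unconstrained local minimizer of the objective
\[
J_{\epsilon}(w) \coloneqq F(w) + \tfrac{\alpha}{2}\|w\|_W^2 + \beta\,G_{\epsilon}^0(w) + \tfrac{1}{2}\|w-\bar w_0\|_{L^2(\Omega)}^2
\]
on an open ball in $W$, so once Fréchet differentiability of $J_{\epsilon}$ at $w_{\epsilon}$ is established, the necessary condition $J_{\epsilon}'(w_{\epsilon})=0$ in $W^*$ is precisely \eqref{eq:oc_aux0}.

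I would therefore verify Fréchet differentiability term by term. The functional $F$ is continuously Fréchet differentiable by Assumption \ref{ass:f}. The quadratic term $\tfrac{\alpha}{2}\|\cdot\|_W^2$ has Fréchet derivative $\alpha(w_{\epsilon},\cdot)_W$. For the $L^2$-tracking term, the compact (in particular continuous) embedding $W\hookrightarrow L^2(\Omega)$ from Proposition \ref{prop:propFrac} shows that $\tfrac{1}{2}\|\cdot-\bar w_0\|_{L^2(\Omega)}^2$ is Fréchet differentiable on $W$ with derivative $(w_{\epsilon}-\bar w_0,\cdot)_{L^2(\Omega)}$.

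The only term that needs slightly more care is $G_{\epsilon}^0$. For each fixed $\epsilon>0$, the scalar function $\psi_{\epsilon}^0(t)=t/(t+\epsilon)$ is $C^2$ on $[0,\infty)$ with globally bounded first and second derivatives, and $t\mapsto 2t\,(\psi_{\epsilon}^0)'(t^2)=2\epsilon t/(t^2+\epsilon)^2$ is globally bounded and Lipschitz on $\mathbb{R}$. A standard Nemytskii/Taylor argument then shows that $w\mapsto\int_{\Omega}\psi_{\epsilon}^0(w^2)\,dx$ is continuously Fréchet differentiable as a map $L^2(\Omega)\to\mathbb{R}$, with derivative
\[
\langle {G_{\epsilon}^0}'(w),v\rangle = \int_{\Omega} 2w\,(\psi_{\epsilon}^0)'(w^2)\,v\,dx,
\]
and composing with the continuous embedding $W\hookrightarrow L^2(\Omega)$ yields Fréchet differentiability on $W$.

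Summing the four derivatives and equating to zero gives \eqref{eq:oc_aux0}. I do not expect any serious obstacle; the only minor subtlety is the Fréchet (as opposed to merely Gateaux) differentiability of $G_{\epsilon}^0$, which is straightforward thanks to the boundedness of ${\psi_{\epsilon}^0}'$ and ${\psi_{\epsilon}^0}''$ for each fixed $\epsilon>0$.
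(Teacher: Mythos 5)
Your proof is correct and takes essentially the same route as the paper: since $\|w_{\epsilon}-\bar w_0\|_W<\rho$ makes the constraint inactive, the first-order condition of the continuously differentiable objective reduces to the stated equation. The paper merely phrases this as a variational inequality over the ball before invoking interiority, whereas you go directly to stationarity and additionally spell out the Fr\'echet differentiability of $G_{\epsilon}^0$, which the paper takes for granted; both arguments are equivalent.
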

	\begin{proof}
		Since the objective function of problem \eqref{eq:auxprob0} is continuously differentiable, a necessary optimality condition is \begin{align*}
			\braket{F'(w_{\epsilon}),(v-w_{\epsilon})}_W +\alpha(w_{\epsilon}, v-w_{\epsilon})_W + \beta {G_{\epsilon}^0}'(w_{\epsilon})(v-w_{\epsilon}) +(w_{\epsilon}-\Bar{w}, v)_{L^2(\Omega)} \geq 0
		\end{align*}   for all  $v\in  W$ with  $\|v-\Bar{w}_0\|_W \leq \rho$. Using the fact that $\|w_{\epsilon}-\Bar{w}_0\|_W < \rho$ leads to the optimality condition \eqref{eq:oc_aux0}.
	\end{proof}
	
	\begin{lemma} \label{lm:mult_bounded0}
		Let $(w_k)$ be the sequence of solutions to the smoothed problem \eqref{eq:auxprob0} corresponding to a sequence $(\epsilon_k) \subset \mathbb{R}^+$ with $\epsilon_k \to 0$ and a local solution $\bar w_0$ and let $\lambda_k \coloneqq {G_{\epsilon_k}^0}'(w_k)$.  Then there is $\bar\lambda_0 \in W^*$ such that $\lambda_k\to \bar\lambda_0$ in $W^*$. Furthermore, 
		\[	\braket{\Bar{\lambda_0},\Bar w_0}_{W} \ge 0   \qquad \text{and} \qquad	\braket{\bar\lambda_0, \varphi \bar w_0}_W \ge 0 \qquad \forall \varphi \in \Cinf, \varphi\ge 0.\] 
	\end{lemma}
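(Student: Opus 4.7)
The plan is to extract $\lambda_k$ from the optimality condition \eqref{eq:oc_aux0} and pass to the limit using strong convergence $w_k \to \bar w_0$ from Lemma \ref{lm:conv_w0}. Solving \eqref{eq:oc_aux0} for $\beta\lambda_k$ yields the identity
\[
\beta \lambda_k = -F'(w_k) - \alpha (w_k, \cdot)_W - (w_k - \bar w_0, \cdot)_{L^2(\Omega)} \quad \text{in } W^*.
\]
Continuity of $F'\colon W \to W^*$ (Assumption \ref{ass:f}) and of the Riesz map imply the first two terms converge in $W^*$ to $-F'(\bar w_0)$ and $-\alpha(\bar w_0,\cdot)_W$, respectively, while the last term vanishes in $W^*$ via the continuous embedding $W \hookrightarrow L^2(\Omega)$ from Proposition \ref{prop:propFrac}(i). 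Setting $\bar\lambda_0 \coloneqq \tfrac{1}{\beta}\bigl(-F'(\bar w_0) - \alpha(\bar w_0,\cdot)_W\bigr)$ then gives $\lambda_k \to \bar\lambda_0$ in $W^*$.

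For the sign statements, formula \eqref{eq:psiDer0} shows that $\lambda_k$ is represented by the $L^\infty$-function $\lambda_k(x) = \tfrac{2 \epsilon_k w_k(x)}{(w_k(x)^2 + \epsilon_k)^2}$, so $\lambda_k w_k = \tfrac{2\epsilon_k w_k^2}{(w_k^2 + \epsilon_k)^2} \ge 0$ pointwise in $\Omega$. Hence, for every $\varphi \in \Cinf$ with $\varphi \ge 0$,
\[
\braket{\lambda_k, w_k}_W = \io \lambda_k w_k \dd x \ge 0 \qquad \text{and} \qquad \braket{\lambda_k, \varphi w_k}_W = \io \varphi \lambda_k w_k \dd x \ge 0.
\]
Since $\lambda_k \to \bar\lambda_0$ in $W^*$ and $w_k \to \bar w_0$ in $W$, the duality pairing $\braket{\lambda_k, w_k}_W$ converges to $\braket{\bar\lambda_0, \bar w_0}_W$, giving the first inequality. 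The second inequality follows once we know $\varphi w_k \to \varphi \bar w_0$ in $W$.

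The one nontrivial point is thus the convergence $\varphi w_k \to \varphi \bar w_0$ in $W$ for fixed $\varphi \in \Cinf$. It suffices to show that the multiplication operator $w \mapsto \varphi w$ is bounded linear on $W$. Membership $\varphi w \in W$ is Lemma \ref{lm:prodOfWFuns}; the norm bound $\|\varphi w\|_W \le C(\varphi)\|w\|_W$ follows in the integral formulation by splitting
\[
\varphi(x)w(x) - \varphi(y)w(y) = (\varphi(x) - \varphi(y))w(y) + \varphi(x)(w(x) - w(y)),
\]
estimating the cross term with Hölder's inequality and the $L^\infty$-bound on $x \mapsto \io \tfrac{(\varphi(x) - \varphi(y))^2}{|x - y|^{d+2s}}\dd y$ supplied by Lemma \ref{lm:phiFracInCinf}, and applying an analogous estimate in the spectral case using the same lemma. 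This is the main (though essentially routine) obstacle; once established, applying the operator to the strong limit $w_k \to \bar w_0$ gives $\varphi w_k \to \varphi \bar w_0$ in $W$ and hence $\braket{\bar\lambda_0, \varphi \bar w_0}_W \ge 0$.
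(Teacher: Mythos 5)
Your proof is correct and takes essentially the same route as the paper: extracting $\lambda_k$ from \eqref{eq:oc_aux0} and passing to the limit via $w_k\to\bar w_0$ for the $W^*$-convergence, and passing to the limit in the nonnegative identity $\braket{\lambda_k,\varphi w_k}_W=\io \frac{2\epsilon_k w_k^2}{(w_k^2+\epsilon_k)^2}\varphi\dd x$ for the sign conditions. Your additional step showing that multiplication by $\varphi\in\Cinf$ is bounded on $W$ (via Lemma \ref{lm:phiFracInCinf}), so that $\varphi w_k\to\varphi\bar w_0$ in $W$, merely makes explicit a detail the paper leaves implicit.
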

	\begin{proof}
		The first part follows from passing to the limit in the optimality condition \eqref{eq:oc_aux0} and $w_k\to\bar w_0$ in $W$ by Theorem \ref{lm:conv_w0}.
		The second statement follows from passing to the limit in
		\[ \braket{\lambda_k, \varphi w_k}_W = \io \frac{2\epsilon_k w_k^2}{(w_k^2+\epsilon_k)^2} \varphi \dd x \ge 0
		\] 
		for $\varphi \equiv 1$ or $\varphi \in \Cinf$ with $\varphi\ge0$. 
	\end{proof}

	Combining the results above, we obtain the following optimality condition for the original problem similar to \cite[Theorem 5.7]{sparse}.
	\begin{theorem}\label{tm:ocSmooth0}
		Let $\Bar{w}_0$ be a local solution of the original problem \eqref{eq:minprob0}. Then $\Bar{\lambda}_0\in W^*$ 
		defined by \begin{align}\label{eq:oc_lim0}
			\braket{F'(\Bar{w}_0),v}_W +\alpha(\Bar{w}_0,v)_W+\beta \braket{\Bar{\lambda}_0,v}_{W} = 0 \qquad \forall v\in W
		\end{align}
		satisfies
		\begin{align} \label{eq:condLambdaWGeq0}	\braket{\Bar{\lambda}_0,\Bar w_0}_{W} \ge 0   	\end{align}
		and 
		\begin{align}\label{eq:oc_lambda0}
			\braket{\bar\lambda_0, \varphi \bar w_0}_W \ge 0 \qquad \forall \varphi \in \Cinf, \varphi \ge 0.
		\end{align}
	\end{theorem}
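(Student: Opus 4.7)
The plan is to assemble the proof from the smoothing-scheme machinery already prepared in Lemmas \ref{lm:conv_w0}, \ref{lm:oc}, and \ref{lm:mult_bounded0}, mirroring the structure of the corresponding passage-to-the-limit argument for $p \in (0,1)$ in \cite{sparse}. The three displayed statements essentially correspond to three consequences of the same limiting procedure, so the work is primarily organizational.

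First, I would fix a sequence $(\epsilon_k) \subset \R^+$ with $\epsilon_k \to 0$ and let $w_k$ denote the corresponding solutions of the smoothed auxiliary problem \eqref{eq:auxprob0}. By Lemma \ref{lm:conv_w0}, $w_k \to \bar{w}_0$ strongly in $W$, so for all sufficiently large $k$ the inequality constraint $\|w_k - \bar{w}_0\|_W \le \rho$ is strict; hence Lemma \ref{lm:oc} applies and yields
\[
\braket{F'(w_k),v}_W + \alpha(w_k,v)_W + \beta\braket{\lambda_k,v}_W + (w_k-\bar{w}_0,v)_{L^2(\Omega)} = 0 \qquad \forall v \in W,
\]
where $\lambda_k = {G^0_{\epsilon_k}}'(w_k)$.

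Next, I invoke Lemma \ref{lm:mult_bounded0} to obtain an element $\bar{\lambda}_0 \in W^*$ with $\lambda_k \to \bar{\lambda}_0$ in $W^*$. Passing to the limit in the optimality condition above is then routine: by Assumption \ref{ass:f}, $F' \colon W \to W^*$ is continuous, so $F'(w_k) \to F'(\bar{w}_0)$ in $W^*$; the terms $\alpha(w_k,\cdot)_W$ and $(w_k-\bar{w}_0,\cdot)_{L^2(\Omega)}$ pass to the limit by strong convergence in $W$ (which implies strong convergence in $L^2(\Omega)$); and $\beta\braket{\lambda_k,\cdot}_W \to \beta\braket{\bar{\lambda}_0,\cdot}_W$ in $W^*$. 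This gives \eqref{eq:oc_lim0}.

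Finally, both inequalities \eqref{eq:condLambdaWGeq0} and \eqref{eq:oc_lambda0} are already furnished directly by Lemma \ref{lm:mult_bounded0}, whose proof established them by passing to the limit in the pointwise non-negative expression
\[
\braket{\lambda_k, \varphi w_k}_W = \io \frac{2\epsilon_k w_k^2}{(w_k^2 + \epsilon_k)^2}\varphi \dd x \ge 0
\]
for $\varphi \equiv 1$ and for $\varphi \in C_c^{\infty}(\Omega)$ with $\varphi \ge 0$. There is no substantive obstacle here; the real analytic content already sits in Lemma \ref{lm:conv_w0} (strong convergence of the smoothed minimizers, via the $\liminf/\limsup$ trick from Lemma \ref{lm:auxLimSupInf}) and in the $W^*$-convergence of $\lambda_k$ extracted in Lemma \ref{lm:mult_bounded0}, and the present theorem is a direct bookkeeping assembly of these facts.
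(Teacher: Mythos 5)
Your proposal is correct and follows essentially the same route as the paper: pass to the limit in the smoothed optimality condition \eqref{eq:oc_aux0} using the strong convergence $w_k \to \bar w_0$ from Lemma \ref{lm:conv_w0} and the $W^*$-convergence $\lambda_k \to \bar\lambda_0$ from Lemma \ref{lm:mult_bounded0}, with the two sign conditions supplied directly by that same lemma. The only cosmetic difference is that you invoke the convergence of $\lambda_k$ as an input and then re-derive the limit equation, whereas the paper treats the limit passage in \eqref{eq:oc_aux0} as simultaneously producing the convergence and the identification of $\bar\lambda_0$ via \eqref{eq:oc_lim0}; the content is identical.
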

	\begin{proof}
		Passing to the limit in \eqref{eq:oc_aux0} yields $\lambda_k\to \bar\lambda_0$ with $\bar\lambda_0$ defined by \eqref{eq:oc_lim0}. Thus, \eqref{eq:condLambdaWGeq0} and \eqref{eq:oc_lambda0} follow from Lemma \ref{lm:mult_bounded0}. 
	\end{proof}
	
	When comparing Theorems \ref{tm:ocSmooth0} and \ref{tm:ocLimPto0}, one can observe that inequality \eqref{eq:condLambdaWGeq0} is satisfied with equality in \eqref{eq:lambdaW0Forp0}. Further, note that when considering the same local solution $\bar w_0$ of \eqref{eq:minprob0}, the multipliers from the two theorems coincide. Thus, by passing to the limit for $p\to0$ we were able to obtain a stronger optimality condition. This is because for $p\to 0$, we pass to limit twice in a row: first in the smooth approximation of the $L^p$-pseudo norm and then for $p\to 0$. The direct smoothing of the $L^0$-pseudo norm in this section only uses one limit $\epsilon \to 0$.
	
	\subsubsection{Optimality conditions using capacitary measures}
	Next, we rewrite the optimality conditions obtained in the previous two sections using capacitary measures. To do this, there are multiple possibilities. 
	
	\begin{theorem}\label{tm:capMeasuresFor0}
		Let $\bar w_0$ be a local solution of problem \eqref{eq:minprob0}. Then there is a capacitary measure $\bar \mu$ such that 
		\begin{equation}
			\label{eq:ocWithMeasure0} \alpha(\bar w_0,v)_W + \beta \int_{\Omega} \Bar{w}_0 v \dd \bar{\mu} =  -\braket{F'(\Bar{w}_0), v}_W \qquad \forall v\in W\cap L_{\bar\mu}^2(\Omega) 
		\end{equation}
		with $\bar\lambda_0 = \bar w_0 \bar\mu$ on $(W\cap L_{\bar\mu}^2(\Omega))^*$ for the multiplier $\bar\lambda_0$ from Theorem \ref{tm:ocLimPto0}. Moreover, the measure $\bar\mu$ satisfies 
		\[\io \bar w_0^2 \dd \bar\mu = 0\]
		so that $\bar \mu = 0$ on $\{\bar w_0 \neq 0\}$.\\
		A measure $\bar \mu$ satisfying \eqref{eq:ocWithMeasure0} can be obtained in the following ways:
		\begin{enumerate}[label=(\roman*)]
			\item As the $\gamma$-limit $\bar\mu^z$ of a sequence of measures $\mu_k^z$ from Theorem \ref{tm:firstMu}
			for a sequence $(p_k)\in (0,1)$ with $p_k \to 0$. 
			\item For a sequence $(p_k)\in (0,1)$ with $p_k \to 0$, let $\bar w_{p_k}, \bar \lambda_k$ and $\mu_k^w$ as in Theorem \ref{tm:secondMu}. Let the assumptions of Theorem \ref{tm:secondMu} be satisfied for all $k$, i.e. $\bar w_k, \bar\lambda_k\ge0$. Then  $\bar \mu$ can be obtained as the $\gamma$-limit $\bar\mu^w$ of a subsequence of measures $\mu_k^w$. 
			\item For a sequence of positive numbers $\epsilon_k \to 0$, $\bar \mu$ can be obtained as the $\gamma$-limit $\bar\mu^0$ of a sequence of measures $\mu_k^0 \coloneqq \frac{2\epsilon_k}{(w_k^2+\epsilon_k)^2}$ with $w_k$ solving \eqref{eq:auxprob0}.
			\item If $\bar w_0\ge 0$ and $\bar\lambda_0\ge 0$ with $\bar\lambda_0$ from \ref{tm:ocLimPto0},  define 
			\[
			\bar\mu^{\lambda}(B) = \begin{cases}	
				0 & \text{if } \caps(B \cap \{\bar w_0 = 0\})=0, \\
				\infty & \text{if } \caps(B \cap \{\bar w_0 = 0\})>0.
			\end{cases}
			\]
		\end{enumerate}
	\end{theorem}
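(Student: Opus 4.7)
The plan is to prove the core existence of $\bar\mu$ via each of the four constructions (i)--(iv); the identification $\bar\lambda_0 = \bar w_0 \bar\mu$ on $(W\cap L_{\bar\mu}^2(\Omega))^*$ then follows by comparing \eqref{eq:ocWithMeasure0} with the optimality condition \eqref{eq:oc_lim0}. For the integral identity $\int_{\Omega}\bar w_0^2\dd\bar\mu = 0$, once \eqref{eq:ocWithMeasure0} is established and $\bar w_0\in W\cap L_{\bar\mu}^2(\Omega)$ is available, I test \eqref{eq:ocWithMeasure0} with $v=\bar w_0$, subtract \eqref{eq:oc_lim0} tested with the same $v$, and use $\braket{\bar\lambda_0,\bar w_0}_W = 0$ from \eqref{eq:lambdaW0Forp0}. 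The complementary statement $\bar\mu = 0$ on $\{\bar w_0\ne 0\}$ then follows automatically since $\bar w_0^2 > 0$ on this set.

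Constructions (i), (ii), (iii) share a common template: for each approximation level produce a capacitary measure, extract a $\gamma$-convergent subsequence by Theorem \ref{tm:MCompact}, and pass to the limit via Lemma \ref{lm:gammaConvVarRHSHs}. For (i) and (ii) I fix $p_k \to 0$ and apply Theorem \ref{tm:firstMu}, respectively Theorem \ref{tm:secondMu}, to the perturbed problem \eqref{eq:minProbAuxpFor0}, which produces $\mu_k^z$, respectively $\mu_k^w$, together with the relaxed Dirichlet equation satisfied by $\bar w_{p_k}$; the strong convergence $\bar w_{p_k}\to\bar w_0$ in $W$ from Lemma \ref{lm:convWk0p} (hence $F'(\bar w_{p_k})\to F'(\bar w_0)$ in $W^*$ by Assumption \ref{ass:f}) and the decay of the $W$-perturbation then give \eqref{eq:ocWithMeasure0} in the limit. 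For (iii), I use the smoothing of Section \ref{sec:p0Smoothing}: the multiplier $\lambda_k = {G_{\epsilon_k}^0}'(w_k)$ factors as $\lambda_k = w_k\mu_k^0$ with $\mu_k^0 = \frac{2\epsilon_k}{(w_k^2+\epsilon_k)^2}\in L^{\infty}(\Omega)\cap W^*$, which is a capacitary measure by Theorem \ref{tm:radonInCap}; the optimality condition \eqref{eq:oc_aux0} then reads as a relaxed Dirichlet equation for $w_k$ with measure $\mu_k^0$ and right-hand side $-F'(w_k)-(w_k-\bar w_0,\cdot)_{\Ltwo}$, and combining $w_k\to\bar w_0$ in $W$ from Lemma \ref{lm:conv_w0} with $\gamma$-compactness (absorbing the constants $\alpha,\beta$ into the measure so as to match the normalization of Lemma \ref{lm:gammaConvVarRHSHs}) delivers \eqref{eq:ocWithMeasure0}.

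For (iv) I define $\bar\mu^\lambda$ as in the statement and verify $\bar\mu^\lambda \in \M$ and \eqref{eq:ocWithMeasure0}. Membership in $\M$: condition (i) of Definition \ref{def:capMeasure} is immediate, while for condition (ii), given a Borel set $B$ with $\bar\mu^\lambda(B)=0$ the set $N\coloneqq B\cap\{\bar w_0=0\}$ has capacity zero and is therefore quasi-open, $\{\bar w_0>0\}$ is quasi-open by quasi-continuity of $\bar w_0$, so $A\coloneqq\{\bar w_0>0\}\cup N$ is a quasi-open superset of $B$ with $\bar\mu^\lambda(A)=0$; the case $\bar\mu^\lambda(B)=\infty$ is trivial since any superset $A\supseteq B$ still satisfies $\caps(A\cap\{\bar w_0=0\})>0$. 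The equation itself: for every $v\in W\cap L^2_{\bar\mu^\lambda}(\Omega)$, the integrability condition forces $v=0$ quasi-everywhere on $\{\bar w_0=0\}$, which gives $\int_{\Omega}\bar w_0 v\dd\bar\mu^\lambda = 0$; the main obstacle is then to show $\braket{\bar\lambda_0,v}_W=0$ for such $v$, which I derive from $\bar\lambda_0$ being a nonnegative Radon measure in $\M$ (Lemma \ref{lm:etaIsRadon}, Theorem \ref{tm:radonInCap}) together with $\bar w_0\ge 0$ and $\int_{\Omega}\bar w_0\dd\bar\lambda_0 = \braket{\bar\lambda_0,\bar w_0}_W = 0$ from \eqref{eq:lambdaW0Forp0}, which forces $\bar\lambda_0$ to be concentrated on $\{\bar w_0=0\}$; since $v$ vanishes quasi-everywhere there and $\bar\lambda_0$ charges no set of capacity zero, $v=0$ $\bar\lambda_0$-a.e., whence $\braket{\bar\lambda_0,v}_W = 0$ and \eqref{eq:oc_lim0} reduces to \eqref{eq:ocWithMeasure0}.
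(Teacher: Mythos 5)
Your proposal is correct and follows essentially the same route as the paper: the identification $\bar\lambda_0=\bar w_0\bar\mu$ and $\int_{\Omega}\bar w_0^2\,d\bar\mu=0$ by comparing \eqref{eq:ocWithMeasure0} with the limiting optimality condition and invoking \eqref{eq:lambdaW0Forp0}, and constructions (i)--(iii) via Theorem \ref{tm:MCompact}, Lemma \ref{lm:gammaConvVarRHSHs} and the strong convergences of Lemmas \ref{lm:convWk0p} and \ref{lm:conv_w0} (your explicit absorption of $\alpha,\beta$ into the measure is, if anything, more careful than the paper). The only minor deviation is (iv), where you verify the simplified $0/\infty$ measure directly, using that $\bar\lambda_0$ is a nonnegative capacitary measure concentrated on $\{\bar w_0=0\}$, while the paper proceeds as in Theorem \ref{tm:secondMu} with the density $d\bar\lambda_0/\bar w_0$ and then simplifies via $\bar\mu=0$ on $\{\bar w_0\neq 0\}$; both rest on the same facts, so this is an equivalent and slightly more direct presentation rather than a different method.
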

	\begin{proof}
		Let $\bar \mu$ such that equation \eqref{eq:ocWithMeasure0} is satisfied. Comparing equations \eqref{eq:ocLimPto0} and \eqref{eq:ocWithMeasure0} yields $\bar \lambda_0 = \bar w_0 \bar\mu$ on $(W\cap L_{\bar\mu}^2(\Omega))^*$. Thus, one obtains  $\io \bar w_0^2 \dd \bar\mu = \braket{\bar \lambda_0, \bar w_0} = 0$ from equation \eqref{eq:lambdaW0Forp0}. This directly leads to $\bar \mu = 0$ on $\{\bar w_0 \neq 0\}$. \\	
		Next we prove existence of such a $\bar\mu$ by constructing it in the four different manners described above. Let  $p_k\to 0$ and let $(\bar w_{p_k})$ be the sequence considered in Section \ref{sec:ocPTo0} using $\tilde f(w) = f(w) + \frac12 \|w-\bar w_0\|_W^2$. 
		Let $(\mu_k^z)$ and $ (\mu_k^w)$ be sequences of capacitary measures corresponding to the measures from Theorem \ref{tm:firstMu} and Theorem \ref{tm:secondMu} for the solutions $\bar w_{p_k}$. Further, let $(\mu_k^0)$ be a sequence of measures defined by $\lambda_k=w_k\mu_k^0 \coloneqq w_k \frac{2\epsilon_k}{(w_k^2+\epsilon_k)^2}$ with $w_k$ solving \eqref{eq:auxprob0}.
		Then one can extract a $\gamma$-convergent subsequence of each of these sequences by Theorem \ref{tm:MCompact}. Statements (i) and (ii) follow then together with the convergence $\bar w_{p_k} \to \bar w_0$ from Theorem \ref{lm:convWk0p}. Statement (iii) follows from $w_k \to \bar w_0$ by Theorem \ref{lm:conv_w0}.
		\\
		For the result (iv) under the additional assumptions $\bar w_0 \ge 0$ and $\bar \lambda_0 \ge 0$ one can proceed as in Theorem \ref{tm:secondMu} using the optimality condition from Theorem \ref{tm:ocLimPto0}. Then one obtains 
		\begin{align*}
			\bar\mu^{\lambda}(B) &= \begin{cases}
				\io \frac{\dd\bar\lambda_0}{\bar w_0} & \text{ if } \caps(B \cap \{\bar w_0 = 0\})=0  \\ 
				\infty &  \text{ if } \caps(B \cap \{\bar w_0 = 0\})>0  
			\end{cases} \\ 
			&=  \begin{cases}	
				0 & \text{if } \caps(B \cap \{\bar w_0 = 0\})=0 \\
				\infty & \text{if } \caps(B \cap \{\bar w_0 = 0\})>0, \end{cases}
		\end{align*}  
		where the second equality follows from $\bar \mu = 0$ on $\{\bar w_0 \neq 0\}$.
	\end{proof}
	
	Note that if $0\le -F'(\bar w_0) \in W\cap \Linf$, the four measures defined in the previous theorem coincide on $\{\bar w_0>0\} \cup \{-F'(\bar w_0)>0\}$ by Lemma \ref{lm:lambdaIsMuWithF}.
	
	\begin{remark}
		One can rewrite equation \eqref{eq:ocWithMeasure0} with measure $\bar\mu^{\lambda}$ from above under the assumptions $\bar w_0\ge 0$ and $\bar\lambda_0\ge 0$ also as 
		\begin{equation*}
			\alpha(\bar w_0,v)_{W}  =  -\braket{F'(\Bar{w}_0), v}_W  \qquad \forall v\in W \textrm{ such that }  v=0 \text{ on } \{\bar w_0 = 0\}, 
		\end{equation*}
		This follows from the fact that $v\in L_{\bar\mu^{\lambda}}^2(\Omega)$ if and only if $v=0$ on $\{\bar w_0 = 0\}$. 
		In \cite[page 10]{35genDerObstacle}, this observation was discussed for the non-fractional case $s=1$.
	\end{remark}

	\section{Numerical Examples}\label{sec:numerics}
	Now, we compute the solutions $\bar w, \bar z$ and also the corresponding multiplier $\bar \lambda$ and the measure $\bar \mu$ for some examples where $F$ is a tracking type functional. The problem is discretized with finite elements and the stiffness matrix for the fractional part is derived as described in \cite{control1dim, fe2d} for the integral fractional Sobolev space $\tilde{H}^s(\Omega)$. The solution is computed using the algorithm developed in \cite{sparse}.
	
	\subsection{One-dimensional Example}\label{sec:num1d}
	We choose the parameters $\alpha=1, \beta=1, p=0.5$, $s=0.1$ and work with the space $\tilde H^s(\Omega)$. Furthermore, $\Omega = (0,1)$ and 
	\[F(w)=\frac12 \|w-w_d\|_{\Ltwo}^2 \qquad \text{with } \qquad w_d(x) = 20 \left(x-\frac12\right)^2.\]
	After the algorithm has stopped at iteration $K$, $\lambda_K$ and $\mu_K$ are computed via their definition with $\psi_{\epsilon_K}'(w_K^2)$, see \eqref{eq:psiDer0}. Using $\mu_K$, we compute $z_K$ as a solution of \eqref{eq:zForCharK}.
	Figure \ref{fig:1dExample} shows the results: On the left, one can observe that the supports of $w_K$ and $z_K$ coincide. Note that in the previous section we have only shown $\operatorname{supp}(\bar w)\subseteq \operatorname{supp}(\bar z)$. The third plot shows that the measures $\mu_k$ tend to infinity where $z_k$ and $w_k$ tend to zero. 
	
	\begin{figure}[h]
		\begin{subfigure}{0.31\textwidth}
			\includegraphics[scale=0.3]{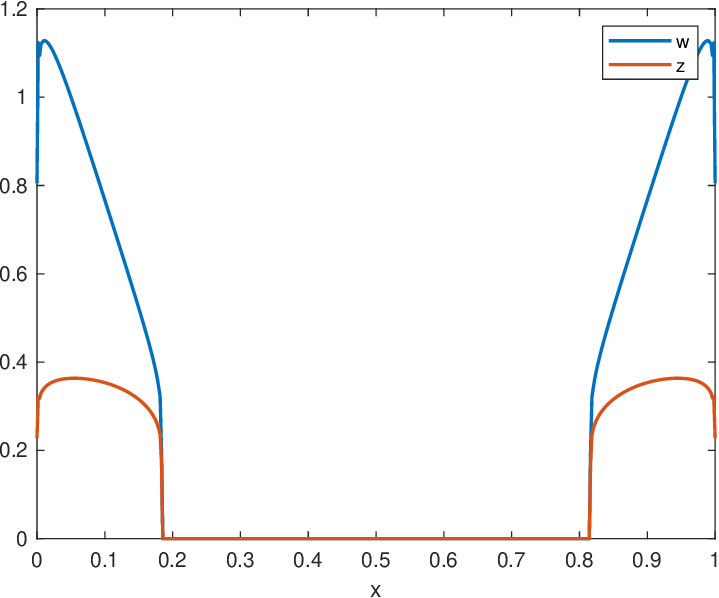} 
			\caption{Solution $w_K$ and $z_K$.}
		\end{subfigure} 
		\begin{subfigure}{0.33\textwidth}
			\includegraphics[scale=0.3]{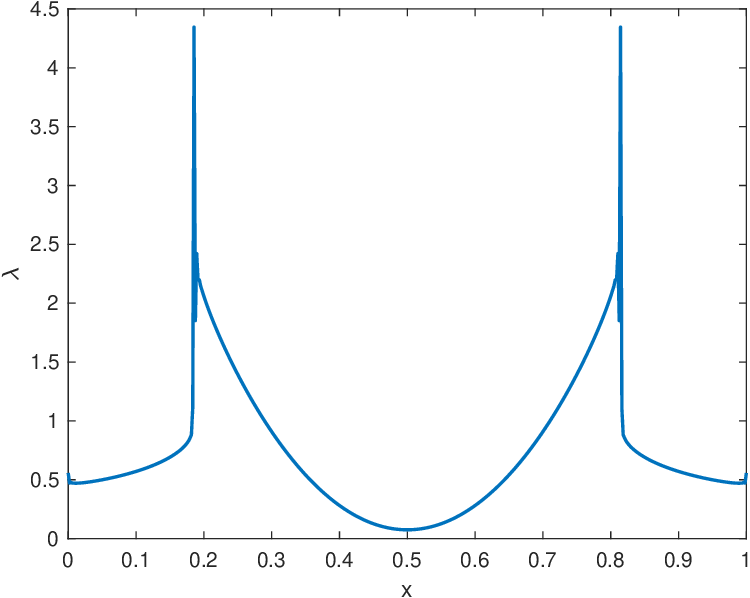}
			\caption{Multiplier $\lambda_K$.}
		\end{subfigure}
		\begin{subfigure}{0.33\textwidth}
			\includegraphics[scale=0.3]{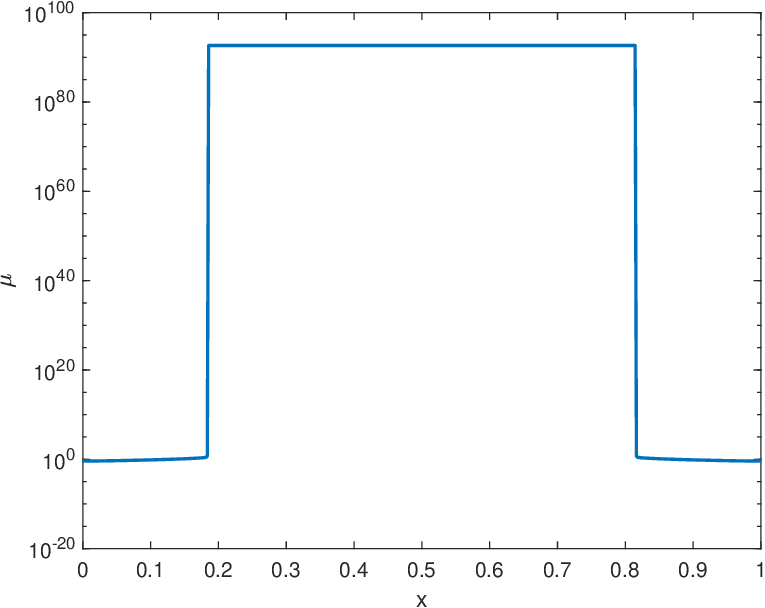}
			\caption{Measure $\mu_K$.}
		\end{subfigure}
		\caption{Solution, multiplier and measure for the one-dimensional example described in Section \ref{sec:num1d}. }\label{fig:1dExample}
	\end{figure}
	
	\subsection{Two-dimensional Example}\label{sec:num2d}
	Next, we consider a two-dimensional example. Here, we use the code from \cite{fe2d} to compute the fractional stiffness matrix for $\tilde H^s(\Omega)$. We choose $\alpha = \beta = 1$, $p=0.05$ and $s=0.1$. Let $\Omega = (-1,1)\times (-1,1)$ and let $F$ be again a tracking type functional with
	\[F(w)=\frac12 \|w-w_d\|_{\Ltwo}^2 \qquad \text{with } \qquad w_d(x) = 5 (|x|^3 + |y|^3).\]
	The results are shown in Figure \ref{fig:2dExample}.

	\begin{figure}[h]
		\begin{subfigure}{0.4\textwidth}
			\includegraphics[scale=0.4]{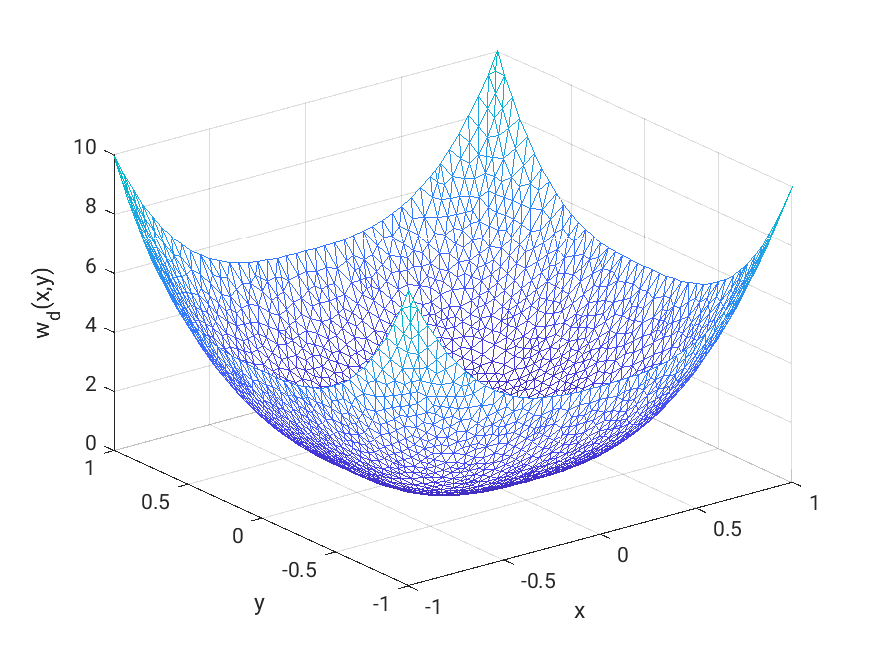} 
			\caption{$w_d$.}
		\end{subfigure}
		\begin{subfigure}{0.4\textwidth}
			\includegraphics[scale=0.4]{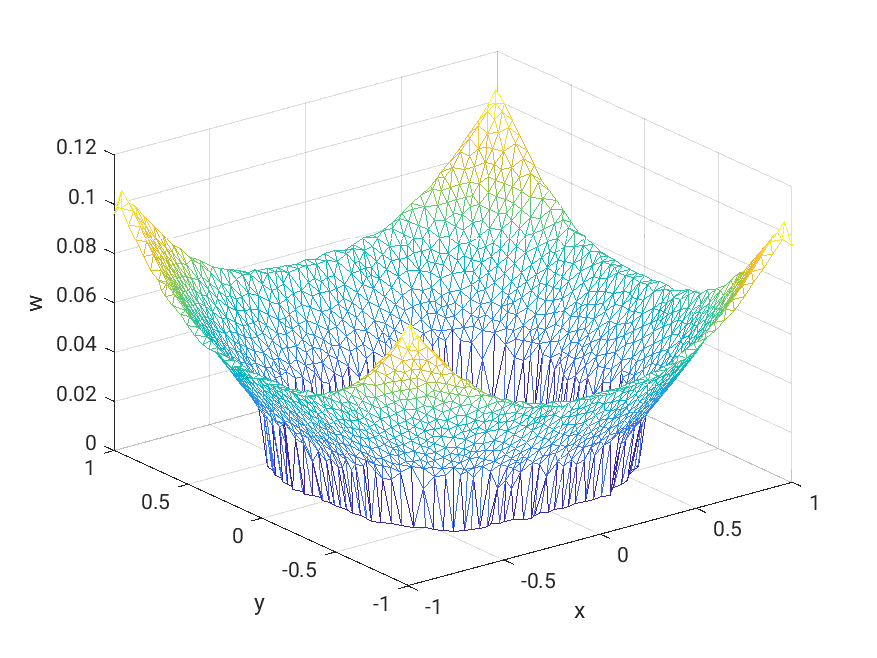} 
			\caption{Solution $w_K$.}
		\end{subfigure} \\
		\begin{subfigure}{0.4\textwidth}
			\includegraphics[scale=0.4]{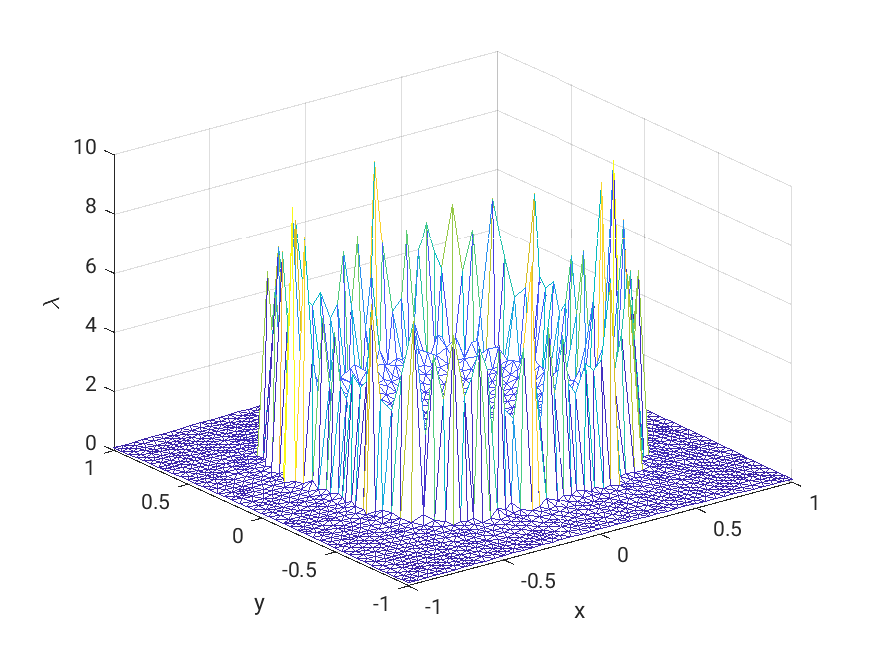}
			\caption{Multiplier $\lambda_K$.}
		\end{subfigure}
		\begin{subfigure}{0.4\textwidth}
			\includegraphics[scale=0.4]{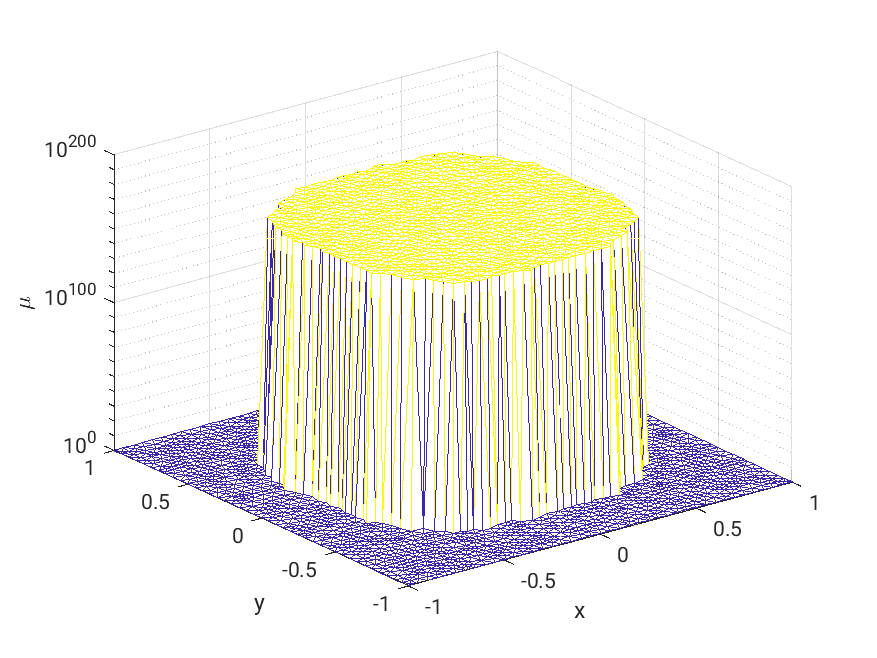}
			\caption{Measure $\mu_K$.}
		\end{subfigure}
		\caption{Solutions, multiplier and measure for the two-dimensional example described in  \ref{sec:num2d}. }\label{fig:2dExample}
	\end{figure}
	
	\subsection{Different fractional Sobolev spaces}\label{sec:compFracDefs}
	To investigate the influence of the choice of the fractional Sobolev space, solutions and their corresponding measures are compared for the spectral and integral spaces $\specHs$ and $\tilde{H}^s(\Omega)$, see Definition \ref{def:specHs} and \eqref{eq:intHs} respectively. For $\Omega = (0,1)$, the parameters $s=0.1$, $p=0.05$ and $\alpha=\beta=1$ and a tracking type functional
	\[ F(w) = \frac12 \|w-w_d\|_{\Ltwo}^2 \qquad \text{with } \qquad w_d(x) = 1.5\,\sin(3\pi x),
	\] the results are shown in Figure \ref{fig:compFracDefs}. The stiffness matrix for the integral case $\tilde{H}^s(\Omega)$ is again computed as in \cite{control1dim}. For $\specHs$, we use the discretization of the spectral fractional Laplacian from \cite{shortmatlab} via its inverse. \\
	One can observe that the solutions differ in their amplitude but have a similar shape. 
	In order to show that the different results are not only a matter of different scaling of the equivalent norms, we also solved the problem for the spectral case $\specHs$ with choice $\alpha = \frac{\|\hat w\|_{\tilde{H}^s(\Omega)}^2}{\|\hat w\|_{\specHs}^2}$, where $\hat w$ is a solution for the $\tilde H^s(\Omega)$-case for $\alpha=1$. However, one can observe that with this choice of $\alpha$ the solutions differ even more.

	\begin{figure}[h]
		\begin{subfigure}{0.45\textwidth}
			\includegraphics[scale=0.4]{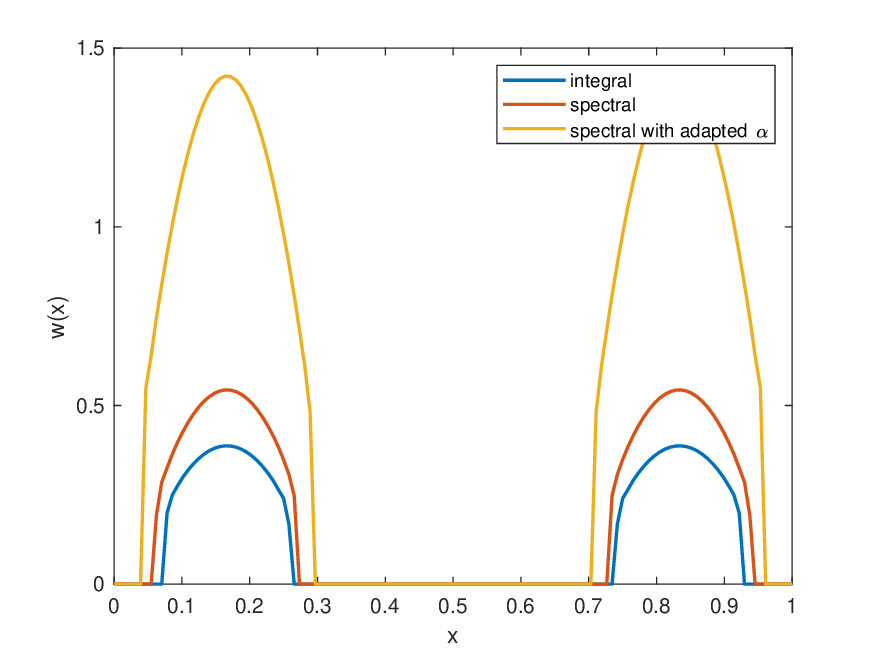} 
			\caption{Solutions $w_K$. 
			}
		\end{subfigure}
		\begin{subfigure}{0.45\textwidth}
			\includegraphics[scale=0.4]{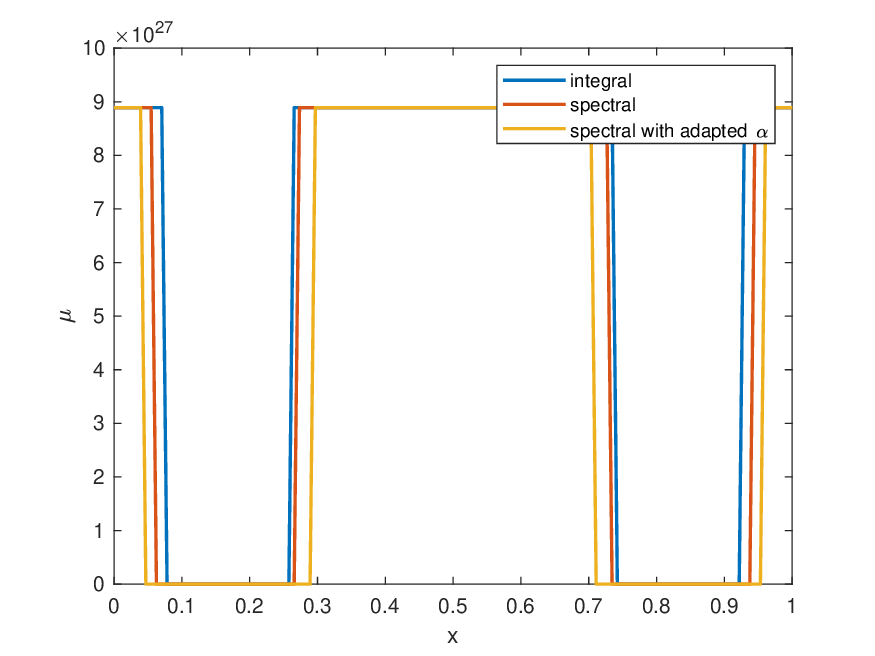} 
			\caption{Measures $\mu_K$.
			}
		\end{subfigure} 
		\caption{Comparison of solutions $w_k$ and measures $\mu_k$ for the integral case $\tilde{H}^s(\Omega)$ and the spectral case $\specHs$ with and without rescaled $\alpha$, see  Section \ref{sec:compFracDefs}. }\label{fig:compFracDefs}
	\end{figure}
	
	\subsection{Case $p=0$}\label{sec:numP0}
	To compute solutions for $p=0$, we use the smoothing scheme from Section \ref{sec:p0Smoothing} again combined with a DC-like algorithm similarly to \cite{sparse}. 
	In each step of this algorithm, we solve the minimization problem
	\[
	w_{k+1} = \underset{w\in W}{\operatorname{argmin}} \, \, F(w) + \frac{\alpha}{2}\|w\|_W^2 + \beta \io \psi_{\epsilon_k}'(w_k^2)w^2.
	\] 
	In this example we choose $s=0.1$, $\alpha=1$, $\beta=0.5$ and $w_d(x) = 10 x (x-1)$ for the tracking functional $F$.\\
	The solution for the choice $\epsilon_k = 0.4^k$ is shown in
	Figure \ref{fig:p0Example}. One can observe that the algorithm converges to a sparser solution for $p=0$ compared to $p=0.1$.\\
	However, the solutions of the algorithm are sensitive to the choice of the sequence $(\epsilon_k)$. For instance, choosing the more slowly decaying sequence $\epsilon_k = 0.9^k$ leads to $w_k \to 0$.\\
	
	\begin{figure}[h]
		\begin{subfigure}{0.45\textwidth}
			\includegraphics[scale=0.4]{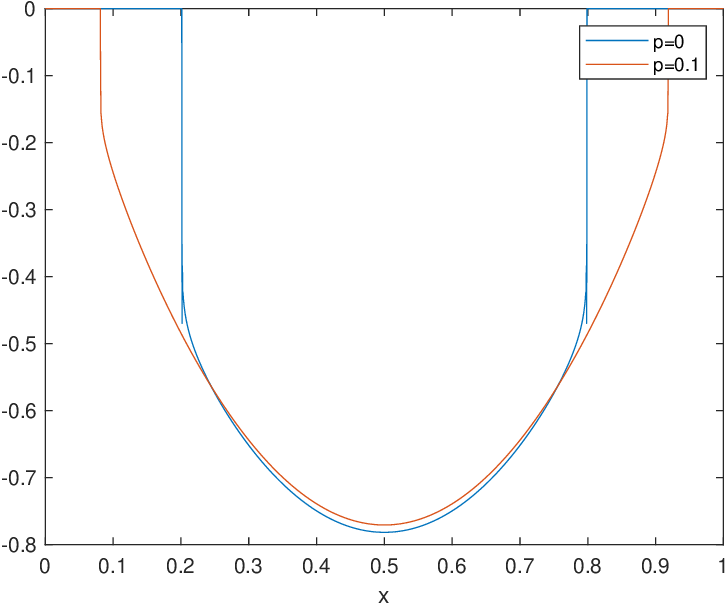} 
			\caption{Solutions $w_K$.}
		\end{subfigure}
		\begin{subfigure}{0.45\textwidth}
			\includegraphics[scale=0.4]{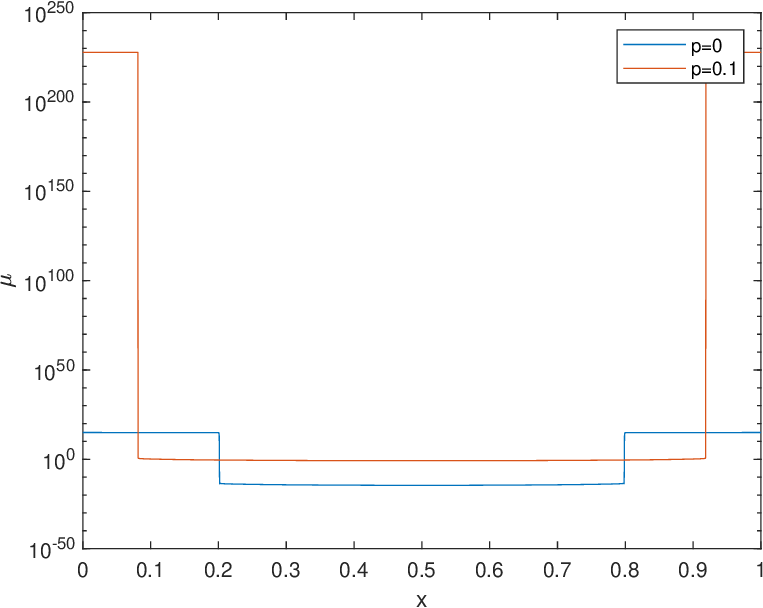} 
			\caption{Measures $\mu_K$.}
		\end{subfigure} 
		\caption{Comparison of solutions and measures for $p=0$ and $p=0.1$ as described in Section \ref{sec:numP0}. }\label{fig:p0Example}
	\end{figure}

	\section*{Acknowledgements}
	The author thanks Daniel Wachsmuth for suggesting the problem analysed in this paper and for many helpful comments.


\begin{thebibliography}{10}
		\providecommand{\MR}{\relax\unskip\space MR }
		\providecommand{\url}[1]{\normalfont{#1}}
		\providecommand{\urlprefix}{Available at }
		
		\bibitem{nonhomBC}
		N. Abatangelo and L. Dupaigne, \emph{Nonhomogeneous boundary conditions for the
			spectral fractional {L}aplacian}, Ann. Inst. H. Poincar\'{e} C Anal. Non
		Lin\'{e}aire 34 (2017), pp. 439--467.
		\urlprefix\url{https://doi.org/10.1016/j.anihpc.2016.02.001}.
		
		\bibitem{fe2d}
		G. Acosta, F.M. Bersetche, and J.P. Borthagaray, \emph{A short {FE}
			implementation for a 2d homogeneous {D}irichlet problem of a fractional
			{L}aplacian}, Comput. Math. Appl. 74 (2017), pp. 784--816.
		\urlprefix\url{https://doi.org/10.1016/j.camwa.2017.05.026}.
		
		\bibitem{shortmatlab}
		H. Antil and J. Pfefferer, \emph{A short matlab implementation of fractional
			poisson equation with nonzero boundary conditions}, Technical report  (2017).
		
		\bibitem{anote}
		H. Antil, J. Pfefferer, and M. Warma, \emph{A note on semilinear fractional
			elliptic equation: analysis and discretization}, ESAIM Math. Model. Numer.
		Anal. 51 (2017), pp. 2049--2067.
		\urlprefix\url{https://doi.org/10.1051/m2an/2017023}.
		
		\bibitem{sparse}
		H. Antil and D. Wachsmuth, \emph{{S}parse optimization problems in fractional
			order {S}obolev spaces (2023 {\it {i}nverse {p}roblems} 39 044001)}, Inverse
		Problems 40 (2024), pp. Paper No. 039501, 3.
		
		\bibitem{113varyingObstacles}
		H. Attouch and C. Picard, \emph{Variational inequalities with varying
			obstacles: the general form of the limit problem}, J. Functional Analysis 50
		(1983), pp. 329--386.
		\urlprefix\url{https://doi.org/10.1016/0022-1236(83)90009-5}.
		
		\bibitem{control1dim}
		U. Biccari and V. Hern\'{a}ndez-Santamar\'{\i}a, \emph{Controllability of a
			one-dimensional fractional heat equation: theoretical and numerical aspects},
		IMA J. Math. Control Inform. 36 (2019), pp. 1199--1235.
		\urlprefix\url{https://doi.org/10.1093/imamci/dny025}.
		
		\bibitem{34perturbationAna}
		J.F. Bonnans and A. Shapiro, \emph{Perturbation analysis of optimization
			problems}, Springer Series in Operations Research, Springer-Verlag, New York,
		2000, \urlprefix\url{https://doi.org/10.1007/978-1-4612-1394-9}.
		
		\bibitem{5sNObstacle}
		C. Christof and G. Wachsmuth, \emph{Semismoothness for solution operators of
			obstacle-type variational inequalities with applications in optimal control},
		SIAM J. Control Optim. 61 (2023), pp. 1162--1186.
		\urlprefix\url{https://doi.org/10.1137/21M1467365}.
		
		\bibitem{148quantStab}
		E. Cinti, R. Ognibene, and B. Ruffini, \emph{A quantitative stability
			inequality for fractional capacities}, Math. Eng. 4 (2022), pp. Paper No.
		044, 28. \urlprefix\url{https://doi.org/10.3934/mine.2022044}.
		
		\bibitem{160introGamma}
		G. Dal~Maso, \emph{An introduction to {$\Gamma$}-convergence}, Progress in
		Nonlinear Differential Equations and their Applications Vol.~8,
		Birkh\"{a}user Boston, Inc., Boston, MA, 1993,
		\urlprefix\url{https://doi.org/10.1007/978-1-4612-0327-8}.
		
		\bibitem{29capCompactness}
		G. Dal~Maso and A. Garroni, \emph{New results on the asymptotic behaviour of
			{D}irichlet problems in perforated domains}, Mathematical Models and Methods
		in Applied Sciences 04 (2011).
		
		\bibitem{114Wiener}
		G. Dal~Maso and U. Mosco, \emph{Wiener's criterion and {$\Gamma$}-convergence},
		Appl. Math. Optim. 15 (1987), pp. 15--63.
		\urlprefix\url{https://doi.org/10.1007/BF01442645}.
		
		\bibitem{115DirInPerf}
		G. Dal~Maso and F. Murat, \emph{Asymptotic behaviour and correctors for
			{D}irichlet problems in perforated domains with homogeneous monotone
			operators}, Ann. Scuola Norm. Sup. Pisa Cl. Sci. (4) 24 (1997), pp. 239--290.
		\urlprefix\url{http://www.numdam.org/item?id=ASNSP_1997_4_24_2_239_0}.
		
		\bibitem{hitch}
		E. Di~Nezza, G. Palatucci, and E. Valdinoci, \emph{Hitchhiker's guide to the
			fractional {S}obolev spaces}, Bull. Sci. Math. 136 (2012), pp. 521--573.
		\urlprefix\url{https://doi.org/10.1016/j.bulsci.2011.12.004}.
		
		\bibitem{121shapeOpNonloc}
		J. Fern\'{a}ndez~Bonder, A. Ritorto, and A.M. Salort, \emph{A class of shape
			optimization problems for some nonlocal operators}, Adv. Calc. Var. 11
		(2018), pp. 373--386. \urlprefix\url{https://doi.org/10.1515/acv-2016-0065}.
		
		\bibitem{7densityFrac}
		A. Fiscella, R. Servadei, and E. Valdinoci, \emph{Density properties for
			fractional {S}obolev spaces}, Ann. Acad. Sci. Fenn. Math. 40 (2015), pp.
		235--253. \urlprefix\url{https://doi.org/10.5186/aasfm.2015.4009}.
		
		\bibitem{105ellipticProb}
		P. Grisvard, \emph{Elliptic problems in nonsmooth domains}, Classics in Applied
		Mathematics Vol.~69, Society for Industrial and Applied Mathematics (SIAM),
		Philadelphia, PA, 2011,
		\urlprefix\url{https://doi.org/10.1137/1.9781611972030.ch1}, Reprint of the
		1985 original [ MR0775683], With a foreword by Susanne C. Brenner.
		
		\bibitem{137convOc}
		M. Hintermüller and S. ~  Stengl, \emph{Vector-valued convexity of solution
			operators with application to optimal control problems}, WIAS Preprints 2759
		(2020). \urlprefix\url{https://doi.org/10.20347/WIAS.PREPRINT.2759}.
		
		\bibitem{lp_cont}
		K. Ito and K. Kunisch, \emph{Optimal control with {$L^p(\Omega)$}, {$p\in
				[0,1)$}, control cost}, SIAM J. Control Optim. 52 (2014), pp. 1251--1275.
		\urlprefix\url{https://doi.org/10.1137/120896529}.
		
		\bibitem{138sparseOptLagr}
		K. Ito and K. Kunisch, \emph{A variational approach to sparsity optimization
			based on {L}agrange multiplier theory}, Inverse Problems 30 (2014), pp.
		015001, 23. \urlprefix\url{https://doi.org/10.1088/0266-5611/30/1/015001}.
		
		\bibitem{spatSparse}
		A. Lentz and D. Wachsmuth, \emph{Spatially sparse optimization problems in
			fractional order {S}obolev spaces}, Preprint  (2024).
		
		\bibitem{lions}
		J.L. Lions and E. Magenes, \emph{Non-homogeneous boundary value problems and
			applications. {V}ol. {III}}, Die Grundlehren der mathematischen
		Wissenschaften, Band 183, Springer-Verlag, New York-Heidelberg, 1973,
		Translated from the French by P. Kenneth.
		
		\bibitem{35genDerObstacle}
		A.T. Rauls and G. Wachsmuth, \emph{Generalized derivatives for the solution
			operator of the obstacle problem}, Set-Valued Var. Anal. 28 (2020), pp.
		259--285. \urlprefix\url{https://doi.org/10.1007/s11228-019-0506-y}.
		
		\bibitem{110optimalPartFrac}
		A. Ritorto, \emph{Optimal partition problems for the fractional {L}aplacian},
		Ann. Mat. Pura Appl. (4) 197 (2018), pp. 501--516.
		\urlprefix\url{https://doi.org/10.1007/s10231-017-0689-5}.
		
		\bibitem{126fracCap}
		S. Shi and J. Xiao, \emph{On fractional capacities relative to bounded open
			{L}ipschitz sets}, Potential Anal. 45 (2016), pp. 261--298.
		\urlprefix\url{https://doi.org/10.1007/s11118-016-9545-2}.
		
		\bibitem{30fracRelCap}
		M. Warma, \emph{The fractional relative capacity and the fractional {L}aplacian
			with {N}eumann and {R}obin boundary conditions on open sets}, Potential Anal.
		42 (2015), pp. 499--547.
		\urlprefix\url{https://doi.org/10.1007/s11118-014-9443-4}.
		
		\bibitem{164gradProjL0}
		Z. Wei, J. Zhang, Z. Xu, Y. Huang, Y. Liu, and X. Fan, \emph{Gradient
			projection with approximate l0 norm minimization for sparse reconstruction in
			compressed sensing}, Sensors 18 (2018).
		\urlprefix\url{https://www.mdpi.com/1424-8220/18/10/3373}.
		
	\end{thebibliography}
\end{document}